\documentclass[12pt]{amsart}
\usepackage{amssymb,amsmath}
\usepackage{geometry}    
\usepackage{mathrsfs}

\usepackage{vmargin}
\setmargrb{1in}{1in}{1in}{1in}

\newtheorem{theorem}{Theorem}[section]
\newtheorem{lemma}[theorem]{Lemma}
\newtheorem{proposition}[theorem]{Proposition}
\newtheorem{corollary}[theorem]{Corollary}
\theoremstyle{definition}
\newtheorem{definition}{Definition}
\newtheorem{remark}{Remark}
\newtheorem{example}{Example}

\newtheorem{problem}{Problem}

\setlength{\parskip}{\medskipamount}

	\begin{document}

\title[On intrinsic and extrinsic rational
approximation to Cantor sets]{On intrinsic and extrinsic rational
approximation to Cantor sets}

\author{Johannes Schleischitz}

\thanks{Middle East Technical University, Northern Cyprus Campus, Kalkanli, G\"uzelyurt \\
	johannes.schleischitz@univie.ac.at}

\begin{abstract} 
	We establish various new results on a problem proposed by K. Mahler
	in 1984 concerning rational approximation to fractal sets by rational numbers
	inside and outside the set in question, respectively. 
	Some of them provide a natural continuation and improvement
	of recent results of Broderick, Fishman and Reich and Fishman and Simmons. 
	A key feature is that many of our new results apply to more general, multi-dimensional fractal sets 
	and require only mild assumptions on the iterated function system.
	Moreover we provide a non-trivial lower bound for the distance of 
	a rational number $p/q$ outside the Cantor middle third set $C$ 
to the set $C$, in terms of the denominator $q$.  
We further discuss patterns of rational numbers in fractal sets.
We want to highlight two of them: Firstly, an upper bound for the number of rational (algebraic) numbers
in a fractal set up to a given height (and degree) for a wide class of fractal sets. Secondly we find properties of the denominator structure of rational points in ''missing digit'' Cantor sets, generalizing claims of Nagy
and Bloshchitsyn. 

\end{abstract}

\maketitle

{\footnotesize 
{\em Keywords}: Cantor sets,
intrinsic/extrinsic Diophantine approximation, continued fractions  \\
Math Subject Classification 2010: 28A80, 11H06, 11J13
}

\section{A question of Mahler and generalizations} \label{sec1}

\subsection{Introduction and notation} \label{se1}
In 1883, G. Cantor introduced what is today referred to as Cantor middle-third set. It consists of the real numbers in $[0,1]$ whose infinite
base $3$ representation 
avoids the digit $1$, i.e. numbers of the form
\[
\xi= \sum_{j\geq 1} \frac{w_{j}}{3^{j}}, \qquad\qquad w_{j}\in \{0,2\}.
\]
In 1984, K. Mahler~\cite{mahler} proposed the problem to study 
how well elements in this set can be approximated by rational numbers
within the set, and rational numbers outside of it.
Problems of this type are usually referred to as 
intrinsic and extrinsic approximation, respectively. 
He noticed that convergents of the continued fraction expansion
to numbers in the Cantor middle third set may
lie in the Cantor middle third set or not. 
His problem can be naturally generalized to wider families of fractal sets. 
The easiest twist 
is to take $b\geq 3$ an integer and a digit set
$W\subseteq \{ 0,1,\ldots,b-1\}$ of cardinality at least two and at 
most $b-1$, and to consider all numbers in $[0,1]$ whose base $b$ digits belong to $W$. 
We call such sets {\em missing digit Cantor sets}
and write $C=C_{b,W}$. 
We will consider wider classes of 
$d$-dimensional fractal sets, comprised in
the following Definition~\ref{deff}. 
 For convenience we always consider
$\mathbb{R}^{d}$ equipped with the supremum 
norm $\Vert \underline{z}\Vert= \max_{1\leq j\leq d} \vert z_{j}\vert$,  however all proofs below
can be easily modified if we work with 
the usual Euclidean norm instead.

\begin{definition}[IFS, Cantor set] \label{deff}
	We call a function $f:\mathbb{R}^{d}\to \mathbb{R}^{d}$ a {\em contraction}
	if for some fixed $0<\tau<1$ we have
	if 
	\[
	\Vert f(\underline{x})-f(\underline{y})\Vert\leq \tau \Vert \underline{x}-\underline{y}\Vert,
	\qquad\qquad \underline{x},\underline{y}\in\mathbb{R}^{d}.
	\]
	In this paper, an {\em iterated function system} (IFS) $F=(f_{1},\ldots,f_{J})$ 
	is a finite set of contractions.
	We call the IFS a {\em similarity IFS} if the contractions are similarities, 
	that is for $1\leq j\leq J$ there exist $0<\tau_{j}<1$ with the property
	\[
	\Vert f_{j}(\underline{x})-f_{j}(\underline{y})\Vert= \tau_{j} \Vert \underline{x}-\underline{y}\Vert,
	\qquad\qquad \underline{x},\underline{y}\in\mathbb{R}^{d}.
	\]
	We call the IFS {\em affine} if the contractions
	are affine functions
	on $\mathbb{R}^{d}$, i.e. functions
	\begin{equation} \label{eq:nf}
	f_{j}(\underline{y})= A_{j}\underline{y}+ \underline{b}_{j}, \qquad\qquad 1\leq j\leq J,
	\end{equation}
	where $A_{j}\in \mathbb{R}^{d\times d}$ 
	and $\underline{b}_{j}\in \mathbb{R}^{d}$.
	 We call an IFS {\em rational-preserving} if
	$f_{j}(\mathbb{Q}^{d})\subseteq \mathbb{Q}^{d}$ 
	for $1\leq j\leq J$.
	Any IFS induces a compact set $C\subseteq \mathbb{R}^{d}$, called
	attractor of the IFS, given as the unique solution of
	$\cup_{1\leq j\leq J} f_{j}(C)=C$. 
	We call any such $C$ a {\em Cantor set} and carry over the definitions above
	in the obvious way to Cantor sets (for example $C$ is called affine and 
	rational-preserving
	Cantor set if the corresponding IFS is affine and rational-preserving).
\end{definition}

We discuss relations between the notions of the above definition.
	Any similarity IFS is an affine IFS with matrices
	$A_{j}=S_{j}\cdot O_{j}$ for orthogonal
	matrices $O_{j}$ and $S_{j}=\tau_{j}I_{d}$ scalar multiples 
	of the identity matrix,
	see Hutchinson~\cite{hutchinson}. Hence any similarity 
	Cantor set is an affine Cantor set and for $d=1$ the concepts
	coincide.
	An affine IFS is rational-preserving if and only if 
	$A_{j}\in \mathbb{Q}^{d\times d}$
	and $\underline{b}_{j}\in \mathbb{Q}^{d}$ for $1\leq j\leq J$. 
	By the above characterization of $A_{j}$,
	to obtain a rational-preserving similarity Cantor set we require orthogonal matrices
	with rational entries (then we can choose $\tau_{j}\in\mathbb{Q}$). 
	A comprehensive description of all such matrices
	was given in~\cite{monthly}. Clearly,
	a special choice is $\tau_{j}\in\mathbb{Q}\cap (-1,1)$ and $O_{j}=I_{d}$ 
	for $1\leq j\leq J$. There are also various examples of non-affine, rational-preserving IFS.
	For $d=1$, one may consider $f(y)=f_{j}(y)$ any collection of rational functions 
	$cP(y)/Q(y)+y/2$ with
	$P,Q\in\mathbb{Q}[Y]$, with $Q$ not having real roots,
	degree of $P$ not exceeding the degree of $Q$, and rational
	$c>0$ sufficiently small to guarantee $\vert f^{\prime}(y)\vert\in[\epsilon,1-\epsilon]$ uniformly,
	for example $f(y)=y/(3+3y^{2})+y/2$. 
	It is further possible to construct real analytic,
	transcendental functions with this property (here transcendental
	means $f$ does not satisfy a polynomial identity $P(z,f(z))=0$ with
	$P\in\mathbb{C}[X,Y]$), it suffices to multiply the functions obtained by Marques and Moreira in~\cite[Theorem~1.2]{mamo}
	by any non-zero rational factor of absolute value less than $2/3$.

The existence of the unique fixed point $C$ in Definition~\ref{deff} 
follows from Banach's Fixed Point Theorem~\cite{hutchinson} applied with respect to
the Hausdorff metric on compact subsets of $\mathbb{R}^{d}$.
Any $\underline{\xi}\in C$ has an {\em address}
$(\omega_{1},\omega_{2},\ldots)\in\{1,2,\ldots,J\}^{\mathbb{N}}$, i.e. it can
be written in the form 
\[
\underline{\xi}= \lim_{i\to\infty} \pi_{1}\circ \cdots \pi_{i}(\underline{0}), \qquad\qquad \pi_{j}=f_{\omega_{j}}\in F.
\]
Addresses may not be unique, to guarantee uniqueness one typically
has to assume the so called
strong separation condition (SSC), see Definition~\ref{osc} below.
We do not dig deeper into this topic here.
We will further need the open set condition for some of our results.

\begin{definition}[OSC, SSC] \label{osc}
	An iterated function system satisfies the {\em open set condition} (OSC) if 
	there exists a bounded open set $O\subseteq \mathbb{R}^{d}$ so that  
	$ f_{j}(O)\subseteq O$ for $1\leq j\leq J$ and for
	$i\neq j$ we have $f_{i}(O)\cap f_{j}(O)=\emptyset$. For simplicity we will say 
	a Cantor set $C$ as in Definition~\ref{deff}
	satisfies OSC if its associated IFS does.
	A Cantor set $C$ satisfies the {\em strong separation condition} (SSC) if 
	the sets $f_{j}(C)$ are disjoint.
\end{definition}

We briefly discuss measures and dimension of fractals. 
Any Cantor set $C$ in Definition~\ref{deff} supports a natural probability
measure by Frostmann's Lemma (see~\cite{falconer}).
For similarity Cantor sets that 
satisfy OSC, it is just a multiple of some $\Delta$-dimensional Hausdorff measure, 
and
there is a well-known general formula to determine $\Delta$ in terms of the contraction factors~\cite{hutchinson}. See also~\cite{mauldin} for a generalization.
Clearly this value $\Delta\in[0,d]$ equals the Hausdorff dimension of $C$. 
For $C_{b,W}$ it just becomes
$\Delta=\log \vert W\vert/\log b$. We refer to 
Falconer~\cite{falconer} for an introduction to metric theory on fractals.
Rational approximation to Cantor type sets has been intensely
studied, especially metrical questions
with respect to the mentioned measure.
However, until recently, 
approximation to fractals in the sense of Mahler's question 
with restrictions of the rationals had not been
studied in detail.
Only in 2007 a first attempt was made by Levesley, Salp and Velani~\cite{leve}.
In 2011 a paper of Broderick, Fishman and Reich~\cite{1} dealt
with intrinsic approximation.
Two recent papers of Fishman and Simmons~\cite{fs,fishsimmons}
shed more light on the topic as well. We
rephrase important results from~\cite{1,fs, fishsimmons} in Section~\ref{s2} below.
The purpose of this paper is to establish further results on Mahler's question,
where possible in the very general settings of Definition~\ref{deff}.
In Section~\ref{se22} we generalize
results on intrinsic approximation from~\cite{1,fs}.
The focus of the paper lies on extrinsic approximation in Section~\ref{3}.
As a byproduct we obtain related results, especially
on the cardinality of rational/algebraic vectors of bounded degree and height
and their period lengths. Moreover we gain some new insight on the
structure of rational numbers in missing digit Cantor sets $C_{b,W}$. 
We gather all these findings in Section~\ref{4}.

\subsection{Recent results on Mahler's problem} \label{s2}

The main result of~\cite{1} 
shows a Dirichlet type result for intrinsic rational approximation
to missing digit
Cantor sets 
$C_{b,W}$. We present a slightly
simplified version
to avoid dealing with technical details.

\begin{theorem}[Broderick, Fishman, Reich (2011)]
	\label{known} 
	Let $C=C_{b,W}$ be a missing digit Cantor set and $\xi\in C$.
	If $\Delta=\log \vert W\vert/\log b$ denotes the Hausdorff dimension of $C$, 
	the inequalities
	\begin{equation} \label{eq:cl}
	1\leq q\leq b^{Q^{\Delta}}, \qquad 
	\vert \xi-\frac{p}{q}\vert \leq \frac{b}{Q q}
	\end{equation}
	have a solution $\frac{p}{q}\in\mathbb{Q}\cap C$ for every
	parameter $Q\geq 1$. In particular, if $\xi\notin \mathbb{Q}$, 
	there exist infinitely
	many $\frac{p}{q}\in\mathbb{Q}\cap C$ with the property
	\begin{equation} \label{eq:crux}
	\vert \xi-\frac{p}{q}\vert \leq \frac{1}{q (\log_{b} q)^{1/\Delta}}.
	\end{equation}
\end{theorem}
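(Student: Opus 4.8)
The plan is to build the approximant directly out of the base-$b$ digit expansion $\xi=\sum_{j\ge1}w_jb^{-j}$ of $\xi\in C=C_{b,W}$ (so every $w_j\in W$), by a pigeonhole argument on blocks of digits; the improvement over the trivial estimate $|\xi-p/q|\le 1/q$ will come from the observation that a repeated digit block forces $\xi$ to continue periodically for a short stretch. For $1\le Q<b$ nothing is needed: the rational point $\rho=0.\overline{w_1}=w_1/(b-1)$ of $C$ has denominator $q\le b-1<b\le b^{Q^{\Delta}}$ and $|\xi-\rho|\le b^{-1}<1/q\le b/(Qq)$, so \eqref{eq:cl} holds. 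Assume therefore $Q\ge b$ and set $L=\lfloor\log_bQ\rfloor\ge1$; then $|W|^{L}=b^{\Delta L}\le b^{\Delta\log_bQ}=Q^{\Delta}$ and $b^{-L}\le b^{-(\log_bQ-1)}=b/Q$.

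First I would carry out the pigeonhole: there are only $|W|^{L}$ words of length $L$ over the alphabet $W$, so among the $|W|^{L}+1$ consecutive blocks $\beta_n=(w_{n+1},\dots,w_{n+L})$, $n=0,1,\dots,|W|^{L}$, two coincide, say $\beta_u=\beta_v$ with $0\le u<v\le|W|^{L}\le Q^{\Delta}$; write $t=v-u\ge1$. Let $\rho$ be the real number with eventually periodic base-$b$ expansion $0.\,w_1\cdots w_u\,\overline{w_{u+1}\cdots w_v}$, and write $\rho=p/q$ in lowest terms. Two of the three requirements are then immediate: every digit of $\rho$ belongs to $\{w_1,\dots,w_v\}\subseteq W$, so $\rho\in C\cap\mathbb Q$; and $q$ divides $b^{u}(b^{t}-1)$, so $1\le q<b^{u+t}=b^{v}\le b^{Q^{\Delta}}$, which is the denominator bound in \eqref{eq:cl}.

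The substantive step is the sharp approximation estimate, and the key point is that $\rho$ agrees with $\xi$ not merely on the first $v$ digits (immediate from the construction) but on the first $v+L$ digits. Indeed, $\beta_u=\beta_v$ says exactly that $w_m=w_{m-t}$ for all $m\in\{v+1,\dots,v+L\}$; since the $m$-th digit of $\rho$, for $m>u$, equals $w_{u+r}$ where $r\in\{1,\dots,t\}$ and $r\equiv m-u\pmod t$, one recovers $w_m$ from $w_{u+r}$ by repeatedly adding the period length $t$, and every intermediate index stays in the window $\{v+1,\dots,v+L\}$ once it leaves $\{u+1,\dots,v\}$, so the relation $w_{m'}=w_{m'-t}$ is applicable at each step and $w_m$ equals the $m$-th digit of $\rho$ for all $m\le v+L$. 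Hence $|\xi-\rho|\le b^{-(v+L)}$, and therefore
\[
|\xi-\rho|\le b^{-(v+L)}=b^{-L}b^{-v}<b^{-L}q^{-1}\le \frac{b}{Q}\cdot\frac1q,
\]
using $q<b^{v}$ and $b^{-L}\le b/Q$; this is the second inequality in \eqref{eq:cl}. I expect this ``propagation'' — a repeated block of length $L$ buys $L$ more digits of agreement beyond the end of the period — to be the only step that is not routine; everything else is bookkeeping with the ranges $v\le|W|^{L}\le Q^{\Delta}$ and with $q\le b^{u}(b^{t}-1)$.

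Finally, \eqref{eq:crux} follows by letting $Q\to\infty$. For $\xi\notin\mathbb Q$ the rationals $p/q\in\mathbb Q\cap C$ furnished by \eqref{eq:cl} satisfy $|\xi-p/q|\le b/(Qq)\le b/Q\to0$, so infinitely many distinct ones occur; and $q\le b^{Q^{\Delta}}$ gives $Q\ge(\log_bq)^{1/\Delta}$, so substituting yields $|\xi-p/q|\le b\,q^{-1}(\log_bq)^{-1/\Delta}$, i.e.\ \eqref{eq:crux} up to the absolute constant that the stated simplified form suppresses.
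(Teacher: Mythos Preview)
Your proof is correct. The paper does not give a direct proof of this theorem; it is quoted as a known result of Broderick--Fishman--Reich and is later recovered (up to constants) as the special case $d=1$, $A_j=1$, $q_j=s_j=b$ of the paper's Theorem~\ref{ver}. When that more general proof is specialised to $C_{b,W}$, it coincides with yours: the pigeonhole on the shifted points $\sigma^n(\omega)(\underline{0})$ in Proposition~\ref{ppr}/Lemma~\ref{hoben} becomes exactly your pigeonhole on length-$L$ digit blocks, the fixed point $F_2$ of $u_{(2)}$ is the purely periodic tail $0.\overline{w_{u+1}\cdots w_v}$, and the approximant $u_{(1)}(F_2)$ is your $\rho=0.w_1\cdots w_u\overline{w_{u+1}\cdots w_v}$. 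Your ``propagation'' step (the matched block buys $L$ further digits of agreement) is the concrete, digit-level incarnation of the paper's estimate $\|\underline{y}-\underline{F}_2\|\le \frac{\tau_{(2)}}{1-\tau_{(2)}}\|\underline{z}-\underline{y}\|$, which in the $C_{b,W}$ setting gives $|\xi-\rho|\lesssim b^{-(v+L)}$ just as you obtain. So the two arguments are essentially the same; yours is simply the elementary base-$b$ presentation, while the paper's is the IFS/contraction formulation that generalises to Theorem~\ref{ver}.

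One minor remark: your derivation of \eqref{eq:crux} yields the bound with an extra factor $b$, which you correctly flag; this is consistent with the paper's comment that the stated form is ``slightly simplified'' from the original.
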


As observed in~\cite{1}, 
the bounds  \eqref{eq:cl} and
\eqref{eq:crux} become weaker when for fixed $b$ we extend the digit set
and thereby increase the Hausdorff dimension $\Delta$. 
This seems counter-intuitive as there are more
rational numbers in $C$, however in larger Cantor
sets there exist more irrational $\xi\in C$ to be approximated as well.    
A generalization of Theorem~\ref{known} was given by Fishman and Simmons~\cite{fs}.

\begin{theorem}[Fishman, Simmons (2014)] \label{argell}
	Let $C\subseteq \mathbb{R}$ be a one-dimensional, affine, rational-preserving 
	Cantor set, i.e. derived from an IFS as in \eqref{eq:hutt}, 
	that satisfies OSC. Let $\Delta$ be its Hausdorff dimension
	and let $\gamma$ be as in \eqref{eq:oppe}. 
	Let $\xi\in C$. Then 
	there is a constant $K$ such that for any $Q\geq \max_{1\leq j\leq J} q_{j}$ the
	estimate
	\[
	\vert \xi-\frac{p}{q}\vert \leq K q^{\gamma-1} (\log Q)^{-1/\Delta}
	\]	
	admits a solution $p/q\in \mathbb{Q}\cap C$ with $1\leq q\leq Q$.
\end{theorem}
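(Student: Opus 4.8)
The strategy is to reduce the statement to Theorem~\ref{known} by transporting the problem from the general affine Cantor set $C$ to a genuine missing digit set via a conjugating rational map, and then to control the distortion introduced by this conjugation in terms of the exponent $\gamma$. First I would fix a level $N$ and look at the cylinder maps $\pi = f_{\omega_1}\circ\cdots\circ f_{\omega_N}$ whose images partition $C$ up to measure zero (using OSC). Each such $\pi$ is an affine map $y\mapsto a y + b$ with rational $a,b$ and $|a|\asymp \prod \tau_{\omega_i}$; the collection of these $|W|$-many-at-each-stage cylinders plays the role of the base-$b$ digit intervals. Given $\xi\in C$, I would write $\xi$ in its address and peel off the first $N$ symbols, so $\xi = \pi(\eta)$ with $\eta\in C$ again; the point is that $\pi$ has rational coefficients, so $\pi$ maps $\mathbb{Q}\cap C$ into $\mathbb{Q}\cap C$ and, crucially, it does not increase denominators too badly relative to the contraction it achieves.

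Next I would invoke Theorem~\ref{known}'s mechanism — or rather its proof idea — in the following form: inside each level-$N$ cylinder there is a rational point whose denominator is comparable to the reciprocal of the cylinder's length raised to the power $\Delta$, reflecting that there are roughly $(\text{denominator})^{\Delta}$ cylinders of that size. Concretely, if $\ell$ denotes the diameter of the cylinder containing $\xi$, one expects a rational $p/q\in\mathbb{Q}\cap C$ with $|\xi - p/q|\lesssim \ell$ and $q\lesssim \ell^{-\Delta'}$ for a suitable exponent; here $\Delta$ enters because the number of cylinders of generation giving length $\ell$ is about $\ell^{-\Delta}$, and one of them supplies a small-denominator rational. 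The exponent $\gamma$ from \eqref{eq:oppe} is precisely the quantity measuring how the denominator of $\pi^{-1}(p/q)$ (equivalently, the "intrinsic" denominator seen inside the cylinder) compares with $q$: roughly $q^{\gamma}$ versus $q$. So after pulling back through $\pi$, the inequality $|\xi-p/q|\lesssim \ell$ with $q\lesssim \ell^{-\Delta}$ becomes, in terms of the true denominator, $|\xi - p/q|\lesssim K q^{\gamma-1}(\log Q)^{-1/\Delta}$ once one chooses $N$ so that $\ell^{-\Delta}\asymp \log Q$, i.e.\ $N\asymp \log\log Q$, exactly as in the missing-digit case $\ell = b^{-N}$, $q\le b^{N^{\Delta}}$.

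The main obstacle, and where care is needed, is the bookkeeping of denominators under composition of the $f_j$ and under the final approximation step: one must verify that the denominator of $\pi(p'/q')$ for $p'/q'$ a low-height rational in $C$ is bounded by (a constant times) $q'$ raised to a power controlled by $\gamma$ and $N$, uniformly over the $J^N$ cylinders, and that the constants do not blow up with $N$. This is essentially an estimate on $\mathrm{den}(a)$ and $\mathrm{den}(b)$ for the $N$-fold compositions $y\mapsto ay+b$, which grow at most geometrically in $N$, against the contraction $|a|\le \tau^N$; the quantity $\gamma$ packages exactly this competition. A secondary technical point is handling the non-uniqueness of addresses and the overlaps permitted only on the boundary under OSC — but since we only need existence of one good rational per $\xi$, we may always select an address and work with the closed cylinders, absorbing boundary effects into the constant $K$. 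Once these denominator bounds are in place, the displayed inequality follows by combining them with the Dirichlet-type count of rationals inside a single cylinder supplied by (the proof of) Theorem~\ref{known}, and optimizing the cutoff $N$ against $Q$.
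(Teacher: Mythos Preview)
Your plan has a genuine gap: the mechanism you describe does not produce the extra factor $(\log Q)^{-1/\Delta}$. Taking a level-$N$ cylinder of diameter $\ell$ containing $\xi$ and using a rational endpoint gives $|\xi-p/q|\lesssim \ell$ with $q\asymp \ell^{-1}$ (or $q\asymp \ell^{-1/(1-\gamma)}$ in the non-monic case), which after eliminating $\ell$ is only the trivial bound $|\xi-p/q|\lesssim q^{\gamma-1}$. Your assertion that a level-$N$ cylinder contains a rational of $C$ with denominator $q\lesssim \ell^{-\Delta}$ is simply false; already for $C_{b,W}$ the smallest denominator in a generic level-$N$ cylinder is $b^{N}=\ell^{-1}$, not $\ell^{-\Delta}$. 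Nothing in your outline supplies the additional saving, and the proposed choice $N\asymp\log\log Q$ is off by an exponential from what is actually needed.

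The paper (following Fishman--Simmons, and in its own proof of the generalisation Theorem~\ref{ver}) does something quite different. One considers the $N+1$ shifted tails $\sigma^{0}\omega,\sigma^{1}\omega,\ldots,\sigma^{N}\omega$ of a fixed address $\omega$ of $\xi$; these give $N+1$ points of $C$. A pigeonhole argument using the dimension of $C$ (Lemma~\ref{hoben}, replacing \cite[Lemma~2.2]{fs}) forces two of them, say at positions $n$ and $n+m$, to lie within $r_{N}\asymp N^{-1/\Delta}$ of each other. One then takes the \emph{periodic point}: the unique fixed point $F_{2}$ of $u_{(2)}=\pi_{n+1}\circ\cdots\circ\pi_{n+m}$, which is rational because the IFS is rational, and sets $p/q=u_{(1)}(F_{2})$ with $u_{(1)}=\pi_{1}\circ\cdots\circ\pi_{n}$. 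The contraction gives $|\xi-p/q|\lesssim \tau_{(1)}\tau_{(2)}\,r_{N}$, while $q$ is bounded by the product of the $q_{\omega_i}$ along the first $n+m\le N$ steps; comparing $\log\tau_{(1)}\tau_{(2)}$ with $\log q$ via the ratios $\log\tau_j/\log q_j$ is exactly where $\gamma-1$ enters, and $r_{N}$ supplies the extra $(\log Q)^{-1/\Delta}$ once one takes $N\asymp\log Q$ so that $q\le Q$. There is no conjugation to a missing-digit set; the improvement over the trivial cylinder bound comes entirely from recurrence of the shift orbit inside $C$.
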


For one-dimensional, affine, rational-preserving Cantor sets as in the theorem, $C$ is the
attractor of an iterated function system 
\begin{equation} \label{eq:hutt}
f_{j}(y)= \frac{p_{j}}{q_{j}} y+ \frac{r_{j}}{q_{j}}, \qquad\qquad 1\leq j\leq J,
\end{equation}
for $p_{j}/q_{j}$ of absolute value at
most $1$ and $r_{j}/q_{j}$ a rational number. 
We can assume $q_{j}>0$.
Then let
\begin{equation} \label{eq:oppe}
\gamma= \max_{1\leq j\leq J} \frac{\log \vert p_{j}\vert}{\log q_{j}}. 
\end{equation}
\begin{definition}
	If $d=1$, we call an affine, rational-preserving IFS {\em monic}  
	if $p_{j}\in\{1,-1\}$ in \eqref{eq:hutt} for all $1\leq j\leq J$, or equivalently if $\gamma=0$.
	We call the derived Cantor set a {\em one-dimensional, affine, monic, rational-preserving Cantor set}.
\end{definition}

Note that the missing Cantor sets $C_{b,W}$ defined above
are monic. Indeed the similarities
can be written $f_{j}(y)=y/b+w_{j}/b$ for $w_{j}\in W$, with
every contraction factor equal to $1/b$.
For monic Cantor sets as in the theorem 
where $\gamma=0$, the estimate becomes
\[
\vert \xi-\frac{p}{q}\vert \leq Kq^{-1} (\log Q)^{-1/\Delta}.
\] 
We thus identify Theorem~\ref{known} (up to the value of the constant)
as a special case.
We turn to extrinsic approximation.
A result
of Fishman and Simmons reads
as follows~\cite[Corollary~1.2]{fishsimmons}.

\begin{theorem}[Fishman, Simmons (2015)] \label{fs2}
	Let $C\subseteq \mathbb{R}^{d}$ be a Cantor set that satisfies OSC
	and does not
	contain a line segment. 
	Assume the additional property that for any 
	compact set $K\subseteq \mathbb{R}^{d}$ there is a constant $\kappa>0$ with
	\begin{equation} \label{eq:hypot}
	\Vert f_{j}(\underline{x})-f_{j}(\underline{y})\Vert\geq \kappa \Vert \underline{x}-\underline{y}\Vert,
	\qquad\qquad \underline{x}, \underline{y}\in K,\; 1\leq j\leq J.
	\end{equation} 
	Let 
	$\underline{\xi}\in C\setminus \mathbb{Q}^{d}$. Then for some 
	constant $c=c(C)>0$, there exist infinitely many
	$\underline{p}/q\in \mathbb{Q}^{d}\setminus C$ such that
	\[
	\Vert \underline{\xi}-\underline{p}/q\Vert \leq c\cdot q^{-1-\frac{1}{d}}.
	\]
\end{theorem}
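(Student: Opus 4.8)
The plan is to produce the rational approximants $\underline{p}/q$ as images of $\underline{0}$ under finite compositions of the maps $f_j$, then perturb them slightly so they fall outside $C$ while still approximating $\underline\xi$ at the required rate. First I would fix an address $(\omega_1,\omega_2,\dots)$ of $\underline\xi$, so that $\underline\xi=\lim_{i\to\infty}\pi_1\circ\cdots\circ\pi_i(\underline 0)$ with $\pi_j=f_{\omega_j}$. For each $i$, set $\underline{\zeta}_i=\pi_1\circ\cdots\circ\pi_i(\underline 0)$. Since the IFS is rational-preserving, $\underline\zeta_i\in\mathbb{Q}^d$; write $\underline\zeta_i=\underline{p}_i/q_i$ in lowest terms. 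Using the contraction bound $\tau$ together with the two-sided estimate \eqref{eq:hypot} on a fixed compact set $K\supseteq C$ containing all the $\underline\zeta_i$, one gets $\Vert\underline\xi-\underline\zeta_i\Vert\asymp\prod_{j\le i}\tau_{\omega_j}$ (up to constants depending on $C$ and $\underline\xi$), so the approximation quality is governed by the product of contraction factors along the address. The denominator $q_i$ is, by the rational-preserving structure, at most a bounded power of the least common multiple of the denominators appearing in $f_{\omega_1},\dots,f_{\omega_i}$; a counting argument (there are at most $J^i$ distinct such compositions, landing in a bounded region) shows that one can find, for every large $Q$, some $i$ with $q_i\le Q$ and $\prod_{j\le i}\tau_{\omega_j}\le c\,q_i^{-1/d}\cdot$(lower-order factor). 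Comparing the number of admissible denominators $q\le Q$ to the volume they must cover yields exactly the exponent $-1-1/d$: this is the pigeonhole heart of a Dirichlet-type bound, the $1/d$ coming from the $d$-dimensional ambient space.

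The second ingredient is to force $\underline{p}/q\notin C$ without spoiling the estimate. Here I would use that $C$ does not contain a line segment: pick a direction $\underline{v}$ and, at scale $\delta_i:=\Vert\underline\xi-\underline\zeta_i\Vert$, replace $\underline\zeta_i$ by $\underline\zeta_i'=\underline\zeta_i+\theta_i\underline v$ for a small rational $\theta_i$ of size comparable to $\delta_i$ but chosen so that $\underline\zeta_i'$ misses $C$. Concretely, since $C$ is compact, nowhere dense (it contains no segment, and OSC forces it to be a proper closed subset with empty interior in any line), and $\mathbb Q^d$ is dense, along the segment from $\underline\zeta_i$ in direction $\underline v$ of length $\asymp\delta_i$ there is an open subinterval of length $\gg\delta_i$ disjoint from $C$; choosing a rational point in it with denominator a bounded multiple of $q_i$ (e.g. of the form $\underline\zeta_i+(a/q_i)\underline v$ with $a$ an integer of size $\asymp q_i\delta_i$) gives $\underline{p}'/q'\notin C$ with $q'\ll q_i$ and $\Vert\underline\xi-\underline p'/q'\Vert\ll\delta_i\ll c\,q'^{-1-1/d}$. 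Letting $i\to\infty$ and using $\underline\xi\notin\mathbb{Q}^d$ to guarantee the $\underline\zeta_i$ (hence the $\underline p'/q'$) are infinitely many distinct points finishes the argument.

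I expect the main obstacle to be the bookkeeping in the denominator bound: showing that $q_i$ grows no faster than roughly $(\max_j q_j)^{i}$ while the approximation error decays like $(\min_j\tau_j)^{i}$, and reconciling this with the target exponent uniformly over all addresses, so that for every $Q$ one genuinely finds a usable index $i$. When the $\tau_j$ and the denominators are ''mismatched'' (large contraction but also large denominator, or vice versa), one must argue that the worst case along the chosen address still beats $q^{-1-1/d}$ — this is where the hypothesis that $C$ has positive dimension bounded away from the degenerate cases, together with the counting of compositions inside a fixed compact set (which caps how fast denominators can blow up relative to how fast diameters shrink, via \eqref{eq:hypot}), must be used carefully. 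The perturbation step is comparatively soft once ''no line segment'' is invoked, but quantifying the length of the $C$-free subinterval uniformly in $i$ (so that the perturbation's denominator stays $O(q_i)$) requires a compactness/uniform-nowhere-density statement that I would isolate as a preliminary lemma.
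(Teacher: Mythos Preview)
Your plan has two genuine gaps.

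First, the theorem does \emph{not} assume the IFS is rational-preserving. The hypotheses are only OSC, the bi-Lipschitz bound \eqref{eq:hypot}, and ``no line segment''. Hence the points $\underline{\zeta}_i=\pi_1\circ\cdots\circ\pi_i(\underline{0})$ need not lie in $\mathbb{Q}^d$ at all, and there is no reason for denominators $q_i$ to exist, let alone to obey the growth you sketch. The exponent $-1-1/d$ does not come from any IFS bookkeeping; it is the classical simultaneous Dirichlet bound in $\mathbb{R}^d$, valid for \emph{every} $\underline{\xi}$ irrespective of $C$. The Fishman--Simmons argument simply starts from Dirichlet approximants $\underline{p}/q$ with $\Vert\underline{\xi}-\underline{p}/q\Vert\le c\,q^{-1-1/d}$ and then has to show that infinitely many of them (or small modifications) lie outside $C$.

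Second, your perturbation step cannot work at the required scale. If $\delta_i\asymp q_i^{-1-1/d}$ and you want a rational shift $(a/q_i)\underline{v}$ with integer $a$ of size $\asymp q_i\delta_i$, then $q_i\delta_i\asymp q_i^{-1/d}\to 0$, so the only admissible $a$ is $0$ and you have moved nowhere. Any nonzero shift with denominator $O(q_i)$ has size $\gg q_i^{-1}$, which destroys the $q^{-1-1/d}$ bound. The actual mechanism is different: one proves a structural lemma (see Lemma~\ref{fslemma} and the discussion after Theorem~\ref{fs2}) that a Cantor set with OSC and \eqref{eq:hypot} cannot contain a long (almost-)arithmetic progression unless it contains a line segment. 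This lemma is applied not to a single Dirichlet approximant perturbed along a line, but to a family of approximants which, after rescaling via the IFS, form such a progression; the ``no line segment'' hypothesis then forces infinitely many of them out of $C$ without any denominator inflation. The role of \eqref{eq:hypot} is precisely to control this rescaling, not to bound denominators.
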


Originally this result is only formulated for the 
narrower class of similarity Cantor sets in~\cite{fishsimmons}, 
however the given proof extends to our more general
situation, upon small modifiactions of the proof of~\cite[Lemma~2.12]{fishsimmons}.
Indeed, our local hypothesis \eqref{eq:hypot} on the contractions suffices to derive
$\gamma$ as in its proof, and the equality in the last displayed formula must
be altered to greater or equal, not affecting the implication of the lemma. 
Up to the constant $c$, the bound is of best possible 
order we can expect for generic $\underline{\xi}\in C$.
As observed in~\cite{fishsimmons}, if $d=1$, by Dirichlet's Theorem
the claim follows directly with $c=1$ 
whenever infinitely many convergents
to $\xi$ lie outside $C$. However, it was recently
shown~\cite{royschl} that
any missing Cantor set $C=C_{b,W}$ contains irrational
numbers $\xi$ 
with almost all convergents in $C_{b,W}$. The actual proof of Theorem~\ref{fs2}
employed a variant of Lemma~\ref{fslemma} below.

\section{Intrinsic approximation} \label{se22}

In this section we provide a variant of Theorem~\ref{argell} for Cantor sets in higher
dimension.

\begin{theorem} \label{ver}
	Let $C\subseteq \mathbb{R}^{d}$ be a rational-preserving affine Cantor set,
	i.e. the attractor of an IFS 
	$F$ consisting of contraction maps
	\[
	f_{j}(\underline{y})= \frac{A_{j}\underline{y}}{q_{j}}+\frac{\underline{b}_{j}}{s_{j}}, 
	\qquad\qquad 1\leq j\leq J, 
	\]
	where $A_{j}\in\mathbb{Z}^{d\times d}, \underline{b}_{j}\in\mathbb{Z}^{d}$
	and $q_{j},s_{j}\in\mathbb{N}$. Let $\tau_{j}\in(0,1)$ be the contraction factor of $f_{j}$ and $\tau=\max_{1\leq j\leq J} \tau_{j}$. Further let
	$S:=\prod_{1\leq j\leq J} s_{j}$ and
	\[
	\mu_{j}=\frac{\log \tau_{j}}{\log q_{j}},\qquad\qquad  \mu=\max_{1\leq j\leq J} \mu_{j}<0.
	\] 
	Then for any $\underline{\xi}\in C$ and any parameter $Q\geq S\cdot 2^{d}(\max_{1\leq j\leq J} q_{j})^{d}$,
	there exists $\underline{p}/q\in C\cap \mathbb{Q}^{d}$ with the properties
	\[ 1\leq q\leq Q, \qquad\qquad
	\Vert \underline{\xi}- \underline{p}/q\Vert \leq q^{\mu/d}(\log Q)^{\log \tau/\log J}.
	\] 
\end{theorem}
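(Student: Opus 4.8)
The plan is to mimic the one-dimensional strategy behind Theorems~\ref{known} and~\ref{argell}, using a pigeonhole argument on the addresses of $\underline{\xi}$. First I would fix $\underline{\xi}\in C$ with address $(\omega_1,\omega_2,\ldots)$, so that $\underline{\xi}=\lim_{i\to\infty}\pi_1\circ\cdots\circ\pi_i(\underline{0})$ with $\pi_j=f_{\omega_j}$. For each level $n$ set $\underline{p}_n/q_n:=\pi_1\circ\cdots\circ\pi_n(\underline{0})$; since the IFS is rational-preserving and each $f_j$ sends a rational with denominator $t$ to one with denominator dividing $s_j\cdot t$ (clearing $q_j$ into the numerator via $A_j\in\mathbb{Z}^{d\times d}$, and $s_j$ into a common denominator), the common denominator $q_n$ of $\underline{p}_n/q_n$ divides $\prod_{i=1}^n q_{\omega_i}\cdot\prod_{i=1}^n s_{\omega_i}$, hence is at most $S^{n}(\max_j q_j)^{n}$ — in fact one gets the sharper bound $q_n \le (\prod_{i\le n} q_{\omega_i})\cdot S$ if one is careful, which is what the stated hypothesis on $Q$ reflects (the $S$ and the $2^d$ and the $q_j^d$ factors). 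Meanwhile, because $\underline{\xi}$ and $\underline{p}_n/q_n$ both lie in $\pi_1\circ\cdots\circ\pi_n(C)$ (using $f_j(C)\subseteq C$ and $\underline{0}$ being replaced by a point of the bounded attractor), contractivity gives $\Vert\underline{\xi}-\underline{p}_n/q_n\Vert\le \tau_{\omega_1}\cdots\tau_{\omega_n}\cdot\mathrm{diam}(C)$.

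The heart of the argument is then to choose, among the first $N$ levels for a suitable $N=N(Q)$, a level $n\le N$ whose partial denominator $q_n$ is not too large while the contraction product $\tau_{\omega_1}\cdots\tau_{\omega_n}$ is already small. Since $\mu_j=\log\tau_j/\log q_j$, i.e. $\tau_j=q_j^{\mu_j}\le q_j^{\mu}$, the accumulated contraction at level $n$ is at most $\bigl(\prod_{i\le n}q_{\omega_i}\bigr)^{\mu}$, which is roughly $q_n^{\mu}$ up to the $S$-factor and constants. So if we simply take $n=N$ maximal with $q_N\le Q$, we get $q:=q_N\le Q$ and $\Vert\underline{\xi}-\underline{p}/q\Vert \lesssim q^{\mu}$, which already beats $q^{\mu/d}$ when $\mu<0$ and $d>1$. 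To extract the extra logarithmic gain $(\log Q)^{\log\tau/\log J}$ one needs the higher-dimensional input: among the $J^n$ possible addresses up to level $n$ there are at most that many distinct partial rationals $\underline{p}_n/q_n$, but one must instead count how many levels $n$ with $q_n\le Q$ there can be along a single address — this is where a Dirichlet/pigeonhole box principle in $\mathbb{R}^d$ enters. The idea is: the number of levels $n$ with $\prod_{i\le n}q_{\omega_i}\le Q$ is at least $\asymp \log Q/\log(\max q_j)$, hence along the address there are many candidate rationals inside a ball of radius $\asymp q_N^{\mu}$ around $\underline{\xi}$; covering $C\cap B(\underline{\xi},\rho)$ by $\asymp \rho^{-\Delta}$ or, more crudely, using that $d$-dimensional balls of radius $\rho$ require $\asymp(\rho/\rho')^{d}$ sub-balls of radius $\rho'$, and that there are at least $\log_{J}(\text{number of levels})$ independent choices, one produces a rational with denominator $q\le Q$ and $\Vert\underline{\xi}-\underline{p}/q\Vert\le q^{\mu/d}\cdot(\log Q)^{\log\tau/\log J}$, the $d$-th root coming from spreading the points over a $d$-dimensional ball and the $\log\tau/\log J$ exponent coming from relating the number of available levels (exponential in the number of IFS steps, base $J$) to the contraction rate (exponential, base $\tau^{-1}$).

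Concretely, the steps in order are: (1) set up addresses and the partial rationals $\underline{p}_n/q_n$; (2) bound their denominators using $A_j\in\mathbb{Z}^{d\times d}$, $\underline{b}_j\in\mathbb{Z}^{d}$ and the $s_j,q_j$, obtaining $q_n\le S\prod_{i\le n}q_{\omega_i}$ (the precise shape explains the hypothesis $Q\ge S\cdot 2^{d}(\max_j q_j)^{d}$, which guarantees enough room); (3) bound the approximation error by $\tau_{\omega_1}\cdots\tau_{\omega_n}\mathrm{diam}(C)\le\bigl(\prod_{i\le n}q_{\omega_i}\bigr)^{\mu}\mathrm{diam}(C)$; (4) let $N$ be the number of levels $n$ with $\prod_{i\le n}q_{\omega_i}\le Q^{1/d}$ (or a comparable threshold), estimate $N\gtrsim \log Q/\log\max q_j$, so there are $\gtrsim J^{?}$-many — more honestly, apply a box/pigeonhole argument to the $N$ points $\{\underline{p}_n/q_n\}$ lying in the small ball around $\underline{\xi}$, grouped by which dyadic scale $\asymp 2^{-k}$ their error lies in, to find a scale hosting many of them; (5) among those, choose two (or use a single well-placed one) so that combining denominators stays $\le Q$ while the error is pushed down by the $d$-dimensional packing factor, yielding the exponent $\mu/d$ and the logarithmic factor. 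The main obstacle I anticipate is step~(4)--(5): in dimension one the convergents are linearly ordered and the gain is immediate, but in $\mathbb{R}^d$ one must carefully run a $d$-dimensional pigeonhole that simultaneously controls the denominator growth (it must not blow past $Q$) and produces the $(\log Q)^{\log\tau/\log J}$ saving; getting the exponent $\log\tau/\log J$ exactly right, rather than some other combination of $\tau$, $J$, $d$ and $\mu$, will require bookkeeping the relation between the number of IFS iterations, the base-$J$ count of addresses, and the base-$\tau^{-1}$ contraction rate, and making sure the OSC-type separation (or lack thereof) does not interfere — though note the theorem as stated does not even demand OSC, so the construction must be purely constructive along one address and not rely on counting distinct points.
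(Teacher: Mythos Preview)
Your proposal has a genuine gap: steps (4)--(5), which you yourself flag as the ``main obstacle'', are not just bookkeeping --- the mechanism you sketch does not produce the $(\log Q)^{\log\tau/\log J}$ saving, and the pigeonhole is applied to the wrong objects. The points $\underline{p}_n/q_n=\pi_1\circ\cdots\circ\pi_n(\underline{0})$ all cluster around $\underline{\xi}$ by construction, so finding two of them close to each other gives no new information; there is no way to ``combine denominators'' of two such points to manufacture a better rational approximation in $C$. (There is also a secondary issue: $\pi_1\circ\cdots\circ\pi_n(\underline{0})$ need not lie in $C$ at all, since $\underline{0}\notin C$ in general, and replacing $\underline{0}$ by a point of $C$ destroys rationality.)

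The paper's argument applies the pigeonhole to a different sequence: the \emph{shifted tails} $\sigma^n(\omega)(\underline{0})\in C$ for $0\le n\le N$. These are $N+1$ points of $C$, so two of them, say $\underline{y}=\sigma^n(\omega)(\underline{0})$ and $\underline{z}=\sigma^{m+n}(\omega)(\underline{0})$, lie within $r_N\asymp N^{\log\tau/\log J}$ of each other (this is the elementary Lemma~5.2, needing no OSC). Since $\underline{y}=u_{(2)}(\underline{z})$ where $u_{(2)}=\pi_{n+1}\circ\cdots\circ\pi_{m+n}$, the point $\underline{y}$ is nearly fixed by $u_{(2)}$. The actual fixed point $\underline{F}_2$ of $u_{(2)}$ is both rational (solve a linear system with integer matrix $q_{(2)}I_d-P_{(2)}$) \emph{and} in $C$ (it has the periodic address $(\omega_{n+1}\cdots\omega_{m+n})^\infty$). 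Then $\underline{p}/q:=u_{(1)}(\underline{F}_2)$ with $u_{(1)}=\pi_1\circ\cdots\circ\pi_n$ is the sought rational in $C$: its denominator is $Sq_{(1)}\det(q_{(2)}I_d-P_{(2)})\le S(2q_{(1)}q_{(2)})^d$ (this is where the exponent $d$ and the hypothesis $Q\ge S\cdot 2^d(\max q_j)^d$ come from), and $\Vert\underline{\xi}-\underline{p}/q\Vert\le \tau_{(1)}\tau_{(2)}(1-\tau_{(2)})^{-1}r_N$. Relating $\tau_{(1)}\tau_{(2)}$ to $q$ via $\log\tau_{\omega_i}\le\mu\log q_{\omega_i}$ and $\log q\le d\sum\log q_{\omega_i}+O(1)$ gives the factor $q^{\mu/d}$, and choosing $N\asymp\log Q$ gives $r_N\asymp(\log Q)^{\log\tau/\log J}$. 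The periodic-point construction is the missing idea in your plan.
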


If $d=1$ and we identify $\gamma-1= \mu$ with $\gamma$ in \eqref{eq:oppe},
we almost obtain Theorem~\ref{argell}.
One difference is a slightly altered exponent for $\log Q$.
We believe the optimal exponent for $\log Q$ is again $-1/\Delta$ for $\Delta$
the Hausdorff dimension of $C$. For similarities $f_{j}$, our exponent is 
slightly worse, unless in the special case that all contraction
factors $\tau_{j}$ coincide where the identity $\log \tau/\log J=-1/\Delta$
can be readily verified~\cite{hutchinson}. In particular
we identify Theorem~\ref{known} as a special case of Theorem~\ref{ver}.
On the other hand, even for $d=1$ our Theorem~\ref{ver} is slightly more general 
than Theorem~\ref{argell} in the sense that we do not require $s_{j}=q_{j}$
as in \eqref{eq:hutt}, so $\mu/d=\mu$ is in general better than the inferred constant $\gamma-1$ obtained from transitioning to representation \eqref{eq:hutt}. 
Observe that $\mu/d\in[-1/d,0)$, with $\mu/d=-1/d$ for instance if all
$A_{j}$ are permutation matrices (with optional sign conversions $1$ to $-1$).
Note also that in contrast to~\cite{fs} we do not require OSC for the conclusion
of Theorem~\ref{ver}.

\section{Extrinsic approximation} \label{3}

\subsection{Uniform extrinsic approximation} \label{uni}

A topic that seems to be untouched so far is uniform extrinsic approximation
to Cantor sets. Theorem~\ref{yz}
below shows that under mild assumptions on the underlying IFS,
any element of
the derived Cantor set is uniformly approximable by rationals outside $C$
with denominator at most $Q$ of order $Q^{-1}$, and this bound is essentially optimal. Thereby,
we see that an improvement as for intrinsic approximation
in Theorem~\ref{known} and Theorem~\ref{argell} cannot be achieved for 
extrinsic approximation. Another interpretation is 
that the exponent $-1-1/d$ from Theorem~\ref{fs2} is not valid
when it comes to uniform approximation.
The result follows from a combination of
Theorem~\ref{thm110} and Theorem~\ref{thm11}, which are formulated 
in very general settings.

\begin{theorem} \label{thm110}
Let $C\subseteq \mathbb{R}^{d}$ be any Cantor set arising from
an IFS with contraction ratios $\tau_{j}, 1\leq j\leq J$ and let $\tau=\max_{1\leq j\leq J} \tau_{j}$
and $D:=-\log \tau/\log J>0$. Assume that either
\begin{itemize}
	\item $C$ satisfies the 
	OSC and \eqref{eq:hypot}, and there is a vector $\underline{v}\in \mathbb{Z}^{d}$
	such that $C$ contains no line segment parallel to $\underline{v}$, or 
	\item we have $D<1/2$
\end{itemize}

	Then there exists a constant $K=K(C)$ so that for every $\underline{\xi}\in C$ and
	every $Q\geq 1$, the inequality
	\begin{equation} \label{eq:opt11}
	\Vert \underline{\xi}-\underline{p}/q \Vert \leq \frac{K}{Q}
	\end{equation}
	has a solution 
	$\underline{p}/q\in\mathbb{Q}^{d}\setminus C$ with $1\leq q\leq Q$.
	\end{theorem}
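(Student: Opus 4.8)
The plan is to establish the stronger \emph{local} statement: there is a constant $K=K(C)$ such that for every $\underline{\xi}\in C$ and every $Q\ge 1$ the ball $B(\underline{\xi},K/Q)=\{\underline{y}:\Vert\underline{y}-\underline{\xi}\Vert<K/Q\}$ contains a point of the lattice $\tfrac1Q\mathbb{Z}^{d}$ that does not lie in $C$; such a point is a rational $\underline{p}/q$ with $q\le Q$ fulfilling \eqref{eq:opt11}. Since $B(\underline{\xi},K/Q)$ contains at least $(2K-1)^{d}$ points of $\tfrac1Q\mathbb{Z}^{d}$, it is enough to bound from above the number of those lattice points that can belong to $C$; equivalently, it is enough to bound the covering number $N\big(C\cap B(\underline{\xi},2K/Q),\,1/(2Q)\big)$ by something strictly below $(2K-1)^{d}$ for $K$ large. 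The whole problem thus reduces to a uniform estimate of how densely $C$ can cluster around one of its own points at scale $1/Q$.

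In the case where only $D<1/2$ is assumed I would argue purely metrically, and I only sketch it. Writing $C=\bigcup_{|\underline{\sigma}|=n}f_{\underline{\sigma}}(C)$ as a union of $J^{n}$ cylinders of diameter at most $\tau^{n}\operatorname{diam}C$, and recalling that $1/D=\log J/\log(1/\tau)$, one obtains the Minkowski bound $N_{\varepsilon}(C)\le(\operatorname{diam}C/\varepsilon)^{1/D}$ for all $0<\varepsilon<\operatorname{diam}C$. Feeding this (with $\varepsilon$ of order $1/Q$) into the reduction above, the restriction on $D$ is what lets the exponents close, so that for a suitable choice of $K$ depending only on $\operatorname{diam}C,\tau,J$ the lattice points in $B(\underline{\xi},K/Q)$ cannot all lie in $C$; a pigeonhole argument then produces the desired $\underline{p}/q\in\mathbb{Q}^{d}\setminus C$. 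No separation hypothesis enters here, which explains why this alternative is stated without OSC.

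In the OSC case the additional hypotheses are used to manufacture a \emph{scale\,-invariant gap} of $C$. Let $O$ be a bounded open set witnessing OSC. The hypothesis that $C$ contains no line segment parallel to $\underline{v}$ forces $O\not\subseteq C$ (otherwise $C$ would contain the nonempty open set $O$, hence line segments in every direction), so $O\setminus C$ is a nonempty open set and contains a ball $B_{0}=B(\underline{z}_{0},\rho_{0})$. Using that the sets $f_{\underline{\sigma}}(O)$ with $|\underline{\sigma}|=n$ are pairwise disjoint, that the $f_{j}$ are injective (which follows from \eqref{eq:hypot}), and that $C\subseteq\bigcup_{|\underline{\sigma}|=n}\overline{f_{\underline{\sigma}}(O)}$, one checks that $f_{\underline{\omega}}(B_{0})\cap C=\varnothing$ for every finite word $\underline{\omega}$. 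Now, given $\underline{\xi}\in C$ with address $(\omega_{1},\omega_{2},\dots)$ and given $Q$, one chooses the stopping generation $n=n(\underline{\xi},Q)$ so that $f_{\underline{\omega}^{(n)}}:=f_{\omega_{1}}\circ\cdots\circ f_{\omega_{n}}$ has both Lipschitz constants of order $1/Q$; this is precisely where \eqref{eq:hypot} is used, exactly as in the passage of \cite{fishsimmons} underlying Theorem~\ref{fs2}, to keep the lower Lipschitz constant comparable to the upper one up to the stopping time (for badly distorting contractions one propagates, instead of the ball $B_{0}$, a gap in the fixed rational direction $\underline{v}$, which is the second place where the choice of $\underline{v}$ is used). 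Then $f_{\underline{\omega}^{(n)}}(B_{0})$ is a set of diameter $\asymp 1/Q$ lying within $O(1/Q)$ of $\underline{\xi}$ and disjoint from $C$; for $K$ large it contains a point of $\tfrac1Q\mathbb{Z}^{d}$, which is the sought $\underline{p}/q\in\mathbb{Q}^{d}\setminus C$ with $q\le Q$. In either case the constant $K$ produced depends only on $C$, which is \eqref{eq:opt11}.

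The main obstacle is exactly the distortion control in the OSC case: a composition of $n$ contractions satisfying only the local bi-Lipschitz bound \eqref{eq:hypot} need not have bounded distortion, so one must carefully extract, as in \cite[Lemma~2.12]{fishsimmons}, a generation at which $f_{\underline{\omega}^{(n)}}(B_{0})$ is genuinely of radius $\gtrsim 1/Q$ and not merely of radius a higher power of $1/Q$ (in which case its lattice points would have denominators exceeding $Q$). By comparison the remaining steps — the reduction to a covering estimate, the metric argument in the case $D<1/2$, and the verification that $O\not\subseteq C$ — are routine.
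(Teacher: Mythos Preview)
Your two cases differ substantially from the paper's arguments, and the $D<1/2$ sketch has a real gap.

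For the OSC case, the paper does not argue via porosity. It applies directly the Fishman--Simmons arithmetic progression lemma (Lemma~\ref{fslemma}): under OSC and \eqref{eq:hypot}, $C$ contains no arithmetic progression of length exceeding some $N=N(C)$ unless it contains the full connecting segment. Given $\underline{\xi}$ and $Q$, one takes the nearest point $\underline{p}/\lfloor Q\rfloor\in\tfrac{1}{\lfloor Q\rfloor}\mathbb{Z}^d$ and forms the progression $\underline{p}/\lfloor Q\rfloor + k\underline{v}/\lfloor Q\rfloor$ for $0\le k\le N$; since $C$ contains no segment parallel to $\underline{v}$, some term lies outside $C$, and all terms are within $O_{N,\underline{v}}(1/Q)$ of $\underline{\xi}$. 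This is precisely where the integrality of $\underline{v}$ and the specific direction enter. Your gap approach is plausible in spirit but heavier: beyond the distortion issue you already flag, the claim $f_{\underline{\omega}}(B_0)\cap C=\varnothing$ is not immediate under OSC alone (closures of the $f_{\underline{\sigma}}(O)$ may meet), and without a genuine bounded-distortion estimate the image $f_{\underline{\omega}^{(n)}}(B_0)$ need not contain any point of $\tfrac1Q\mathbb{Z}^d$ even when its diameter is $\asymp 1/Q$. The paper's route sidesteps all of this in two lines.

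For $D<1/2$, your covering reduction does not close. The Minkowski bound you write is global: it controls the total number of $\tfrac1Q\mathbb{Z}^d$ points lying in $C$ by a quantity growing with $Q$, while you must compare with the constant $(2K-1)^d$ counting lattice points in $B(\underline{\xi},K/Q)$. No fixed $K$ wins this for all $Q$, and your sketch supplies no local refinement. The paper's argument is quite different and explains the threshold $1/2$: for each prime $N\in[Q/2,Q)$ the nearest point of $\tfrac1N\mathbb{Z}^d$ to $\underline{\xi}$ is a rational within $O(1/Q)$ of $\underline{\xi}$; by the Prime Number Theorem these give $\gg Q/\log Q$ pairwise distinct rationals of height at most $Q$, whereas Theorem~\ref{yzok} bounds the rationals of height $\le Q$ lying in $C$ by $\ll Q^{2D}$. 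When $2D<1$ one of the prime-denominator approximants must fall outside $C$. The device of varying the denominator over primes is what converts a global count into the needed local conclusion, and it is absent from your outline.
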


We believe that the weak assumption on line segments just means that $C$
has empty interior, however for $d>1$ a proof of this would be desirable.
The implication from this first assumption 
is in fact a straight-forward consequence of results in~\cite{fishsimmons}.
The alternative latter assumption essentially says that $C$ has small Hausdorff
dimension, and the implication is a consequence of a new counting result in this case.
Now we turn towards the more challenging reverse estimates.

\begin{definition}
	We call a vector $\underline{v}\in \mathbb{R}^{d}$ {\em irrational} if it has
	at least one irrational coordinate, i.e. $\underline{v}\notin \mathbb{Q}^{d}$.
\end{definition}

\begin{theorem} \label{thm11}
	Let $C\subseteq \mathbb{R}^{d}$ be a rational-preserving Cantor set.
	Assume that there is $f_{j}\in F$ whose unique fixed point 
	$\underline{\alpha}_{j}$ lies in $\mathbb{Q}^{d}$. Assume further that  
	either 
	\begin{itemize}
		\item all contraction maps $f_{j}\in F$
		are one-to-one, or
		\item any element in $C$ has at most countably many addresses
		(which is true if $C$ satisfies the SSC).
	\end{itemize} 
	Let $\Phi: \mathbb{N}\to \mathbb{R}_{>0}$ be any function
	that tends to $0$ (arbitrarily
	slowly). Then the set of irrational $\underline{\xi}\in C$ for which
	\begin{equation} \label{eq:missing11}
	\Vert \underline{\xi}-\underline{p}/q \Vert > \frac{\Phi(Q)}{Q}
	\end{equation} 
	holds
	for infinitely many $Q\in\mathbb{N}_{>0}$ 
	and every $\underline{p}/q\in\mathbb{Q}^{d}\setminus C$
	with $1\leq q\leq Q$, is uncountable and dense in $C$. 
\end{theorem}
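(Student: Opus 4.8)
The plan is to construct, inside an arbitrary cylinder of $C$, an uncountable family of irrational points, each lying — at a scale comparable to the reciprocal of the denominator — extremely close to an explicit sequence of rational points that \emph{belong to} $C$ and whose denominators we can name; the conclusion then follows from the elementary fact that two distinct rationals $\underline{a}/q_{1},\underline{b}/q_{2}\in\mathbb{Q}^{d}$ with positive integer denominators satisfy $\Vert\underline{a}/q_{1}-\underline{b}/q_{2}\Vert\geq 1/(q_{1}q_{2})$. Fix the map $f_{j}\in F$ with rational fixed point $\underline{\alpha}:=\underline{\alpha}_{j}$; since $f_{j}(C)\subseteq C$ we have $\underline{\alpha}\in C\cap\mathbb{Q}^{d}$. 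Put $\Delta_{0}:=\sup_{\underline{x},\underline{y}\in C}\Vert\underline{x}-\underline{y}\Vert<\infty$, and for a finite word $w=w_{1}\cdots w_{n}$ over $\{1,\dots,J\}$ let $f_{w}:=f_{w_{1}}\circ\cdots\circ f_{w_{n}}$, so that $f_{w}$ has Lipschitz constant at most $\tau^{n}$ (here $\tau=\max_{j}\tau_{j}<1$) and $f_{w}(C)$ has diameter at most $\tau^{n}\Delta_{0}$. We may assume $C$ is infinite (if $C$ is finite it contains no irrational point and there is nothing to prove); then there are indices $a\neq b$ with $f_{a}(\underline{\alpha})\neq f_{b}(\underline{\alpha})$, since otherwise all $f_{i}(\underline{\alpha})$ coincide with $f_{j}(\underline{\alpha})=\underline{\alpha}$, making $\underline{\alpha}$ a common fixed point and forcing $C=\{\underline{\alpha}\}$.

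\emph{The construction.} Let $u$ be an arbitrary finite word (empty in the basic case). For $t=(t_{1},t_{2},\dots)\in\{0,1\}^{\mathbb{N}}$ let $\underline{\xi}^{(t)}\in f_{u}(C)\subseteq C$ be the point with address $u$ followed by $\omega=\omega^{(t)}$, where $\omega_{i}=j$ for $i\notin S$ and $\omega_{s_{k}}=a$ or $b$ according as $t_{k}=0$ or $1$, and $S=\{s_{1}<s_{2}<\cdots\}$ is chosen below. All entries of $\omega$ strictly between $s_{k-1}$ and $s_{k}$ equal $j$, which fixes $\underline{\alpha}$; hence the rational point
\[
\underline{\rho}_{k}:=f_{u}\circ f_{\omega_{1}\cdots\omega_{s_{k}-1}}(\underline{\alpha})=f_{u}\circ f_{\omega_{1}\cdots\omega_{s_{k-1}}}(\underline{\alpha})
\]
lies in $C$ (by rational-preservation), and its denominator $b_{k}$ is a definite finite number depending on $u$, on $s_{1},\dots,s_{k-1}$ and on $t_{1},\dots,t_{k-1}$, but not on anything chosen later. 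We now fix everything by an interleaved recursion. Given $s_{1},\dots,s_{k-1}$, let $B_{k}$ be the largest value of $b_{k}$ over the finitely many admissible $(t_{1},\dots,t_{k-1})$; since $\Phi(Q)\to 0$, pick an integer $Q_{k}>Q_{k-1}$ with $\Phi(Q)<1/(2B_{k})$ for all $Q\geq Q_{k}$; then pick $s_{k}>s_{k-1}$ so large that $\Delta_{0}\tau^{s_{k}-1}<1/(2B_{k}Q_{k})$ and $s_{k}-s_{k-1}>N_{0}$, where $N_{0}=N_{0}(C,a,b)$ satisfies $2\tau^{N_{0}}\Delta_{0}<\Vert f_{a}(\underline{\alpha})-f_{b}(\underline{\alpha})\Vert$.

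\emph{Verification.} Writing $\underline{\xi}^{(t)}=f_{u}\circ f_{\omega_{1}\cdots\omega_{s_{k}-1}}(z)$ with $z\in C$, we obtain $\Vert\underline{\xi}^{(t)}-\underline{\rho}_{k}\Vert\leq\Delta_{0}\tau^{s_{k}-1}<1/(2b_{k}Q_{k})$. If some $\underline{p}/q\in\mathbb{Q}^{d}\setminus C$ with $1\leq q\leq Q_{k}$ satisfied $\Vert\underline{\xi}^{(t)}-\underline{p}/q\Vert\leq\Phi(Q_{k})/Q_{k}$, then, as $\underline{p}/q\neq\underline{\rho}_{k}$ (the latter being in $C$), the separation estimate would give
\[
\frac{1}{qb_{k}}\leq\Vert\underline{p}/q-\underline{\rho}_{k}\Vert\leq\frac{\Phi(Q_{k})}{Q_{k}}+\Vert\underline{\xi}^{(t)}-\underline{\rho}_{k}\Vert<\frac{1}{2b_{k}Q_{k}}+\frac{1}{2b_{k}Q_{k}}=\frac{1}{b_{k}Q_{k}}\leq\frac{1}{qb_{k}},
\]
a contradiction; hence \eqref{eq:missing11} holds for $Q=Q_{1},Q_{2},\dots$, for every $t$. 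Next, $\{\underline{\xi}^{(t)}\}_{t}$ is uncountable. The addresses $u\omega^{(t)}$ are pairwise distinct, so if each point of $C$ has at most countably many addresses a countable image would exhibit $\{0,1\}^{\mathbb{N}}$ as a countable union of countable sets. If instead all $f_{i}$ are one-to-one, take $t\neq t'$ first differing at index $m$: the common prefix map is injective, reducing us to comparing $f_{a}(f_{j}^{N}(\cdot))$ with $f_{b}(f_{j}^{N}(\cdot))$ for $N=s_{m+1}-1-s_{m}>N_{0}$, and these lie within $\tau^{N}\Delta_{0}$ of $f_{a}(\underline{\alpha})$, resp.\ $f_{b}(\underline{\alpha})$, hence are distinct by the choice of $N_{0}$; so $t\mapsto\underline{\xi}^{(t)}$ is injective. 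Deleting the at most countably many rational values leaves uncountably many irrational $\underline{\xi}^{(t)}$, each satisfying the conclusion. Finally, letting $u$ vary over all finite words, the cylinders $f_{u}(C)$ form a neighbourhood basis for the topology of $C$, and the construction furnishes uncountably many of the above irrational points inside each $f_{u}(C)$; therefore the set described in the statement is dense in $C$.

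\emph{Main obstacle.} The only genuinely delicate issue is controlling the denominators $b_{k}$: for a general rational-preserving IFS there is no a priori bound on the growth of denominators under iteration, so the parameters $Q_{k}$ cannot be fixed ahead of time. The sparsity of $S$ resolves this — each anchor $\underline{\rho}_{k}$ is an actual point of $C$ determined by only finitely many earlier choices, hence has a concrete finite denominator, around which the recursion is organised (choose $b_{k}$, then $Q_{k}$, then $s_{k}$). A minor secondary point is the uncountability under the ``all $f_{i}$ one-to-one'' hypothesis, handled by the separation-of-tails argument rather than by counting addresses.
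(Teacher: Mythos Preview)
Your proof is correct and takes a genuinely different, more elementary route than the paper's.

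The paper constructs the same kind of point (address equal to the fixed-point map $f$ except at a sparse set of positions), but verifies \eqref{eq:missing11} via one-dimensional continued fraction theory: it argues coordinatewise that $p_{k,i}/q_{k}$ is a convergent to $\xi_{i}$, then invokes a Minkowski-type lemma (the paper's Lemma on successive minima of $K(T)$ with respect to $\Lambda_{\xi}$) to bound every other rational with small denominator away from $\xi_{i}$. You replace all of this by the single elementary separation $\Vert\underline{p}/q-\underline{\rho}_{k}\Vert\geq 1/(qb_{k})$ and a triangle inequality, which works uniformly in dimension $d$ and avoids any coordinatewise analysis. Similarly, the paper proves irrationality of each constructed $\underline{\xi}$ directly via a fixed-point argument using injectivity of $f,g$; you bypass this by noting that $\mathbb{Q}^{d}$ is countable, so deleting the rational $\underline{\xi}^{(t)}$ from an uncountable family costs nothing. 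For density the paper appeals to a separate propagation lemma (its Lemma~\ref{kokomo}), whereas you build the prefix $u$ into the construction from the start. Finally, for uncountability under the injectivity hypothesis the paper again uses continued fractions to distinguish two points, while your geometric tail-separation argument (proximity to $f_{a}(\underline{\alpha})$ versus $f_{b}(\underline{\alpha})$) is more direct. In each case your argument is shorter and uses less machinery; the paper's approach, on the other hand, extracts more quantitative information along the way (e.g.\ the explicit relation between $Q_{k}$ and the denominators $z_{k,i}$ of the next convergent), which it reuses in the proof of the exponent identity in Theorem~\ref{expon}.
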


\begin{remark}
	Clearly the claim holds if $\underline{\xi}\in C\cap \mathbb{Q}^{d}$, and
	such vectors are dense in rational-preserving Cantor sets $C$
	(see also the last claim of Theorem~\ref{yz} and its proof below). Hence the irrationality 
	in the claim is	an important issue. Ideally we would like to sharpen the claim by asking the coordinates of $\underline{\xi}\in C$ to be $\mathbb{Q}$-linearly independent	together with $\{1\}$. We may infer this strengthened version 
	if any hyperplane
	in $\mathbb{R}^{d}$ intersects $C$ in at most countably many points, however 
	this assumption seems unnatural.
\end{remark}

\begin{remark}
	It is possible to relax the assumption that the 
	contraction maps $f_{i}$ are one-to-one in various ways. 
	However, we are unable to provide a very natural assumption and
	do not further elaborate on it here. The condition appears to be unrelated
	to OSC, and we believe OSC is not sufficient to imply the alternative 
	assumption either, however we are not aware of
	any concrete example. 
	See Sidorov~\cite{sidorov} for similarity Cantor sets
	in which all but finitely many elements have uncountably many addresses, which
	however do not satisfy the OSC. 
\end{remark}

While not satisfied in general,
the condition of the rational fixed point in Theorem~\ref{thm11} holds
if $C$ is a rational-preserving
{\em affine} Cantor set, i.e. derived from an IFS as
in \eqref{eq:nf}, as then clearly $\underline{\alpha}_{j}=-(A_{j}-I_{d})^{-1}\underline{b}_{j}\in\mathbb{Q}^{d}$
(note that $1$ is no eigenvalue of $A_{j}$ since it induces a contraction).
We may choose $A_{j}$ rational scalar multiples of the identity matrix
to obtain an IFS that consists of similarity contractions 
\begin{equation} \label{eq:hutte}
f_{j}(\underline{y})= \tau_{j}\underline{y}+\underline{b}_{j}, \qquad\qquad
\tau_{j}\in\mathbb{Q}\cap (-1,1)\setminus \{0\},\; \underline{b}_{j}\in\mathbb{Q}^{d}.
\end{equation}
Recall a Liouville number is an irrational real number with the property
that $\vert \xi-p/q\vert< q^{-N}$ has a rational solution $p/q$ for arbitrarily
large $N$. We call $\underline{\xi}\in\mathbb{R}^{d}$ Liouville vector if accordingly
$\vert \underline{\xi}-\underline{p}/q\vert< q^{-N}$ has infinitely many 
solutions for every $N$.
Theorem~\ref{thm11} is essentially
equivalent to asking that $C$ contains points 
that are arbitrarily well-approximable by rational vectors inside $C$ 
(in particular $C$ must contain 
''intrinsic Liouville vectors''). 
Indeed, the proof of Theorem~\ref{thm11} relies on the construction 
of such intrinsic Liouville vectors.
Combining these observations,
 from Theorem~\ref{thm110} and Theorem~\ref{thm11} we deduce at once

\begin{theorem} \label{yz}
	Let $C\subseteq \mathbb{R}^{d}$ be a
	rational-preserving similarity
	Cantor set, for instance derived from an IFS of contractions
	as in \eqref{eq:hutte}. Assume either
    OSC is satisfied and  
	 there is a vector $\underline{v}\in \mathbb{Z}^{d}$
	such that $C$ contains no line segment parallel to $\underline{v}$,
	or $D<1/2$ with $D$ as in Theorem~\ref{thm110}. Then there
	exists $K>0$ such that for any $\underline{\xi}\in C$
	and any $Q>1$ the inequality
	\[
	\Vert \underline{\xi}-\underline{p}/q\Vert \leq \frac{K}{Q}
	\]  
	has a solution $\underline{p}/q\in\mathbb{Q}^{d}\setminus C$ with $1\leq q\leq Q$. On the other hand, for 
	any function $\Phi: \mathbb{N}\to (0,\infty)$ that tends to $0$, there 
	exists $\underline{\xi}\in C\setminus \mathbb{Q}$ for which  
	\[
	\Vert \underline{\xi}-\underline{p}/q\Vert > \frac{\Phi(Q)}{Q}
	\]
  holds for certain arbitrarily large $Q$ and
  any $\underline{p}/q\in\mathbb{Q}^{d}\setminus C$ with $1\leq q\leq Q$.
  Finally, for $\Psi: \mathbb{N}\to (0,\infty)$ any function,
   there are intrinsically $\Psi$-approximable vectors in $C$, defined
  as the vectors $\underline{\xi}\in C$ for which the inequality
  \[
  \Vert \underline{\xi}-\underline{p}/q\Vert < \Psi(q)
  \]
  admits infinitely many solutions $\underline{p}/q\in\mathbb{Q}^{d}\cap C$.
  In particular, $C$ contains Liouville vectors.
\end{theorem}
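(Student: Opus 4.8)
First I would check that a rational-preserving similarity Cantor set — in particular one arising from an IFS of the form \eqref{eq:hutte} — meets every hypothesis we shall invoke. Each similarity contraction $f_j(\underline{y})=\tau_j\underline{y}+\underline{b}_j$ satisfies \eqref{eq:hypot} globally with $\kappa=\min_{1\le j\le J}\vert\tau_j\vert$, it is injective, and its fixed point $\underline{\alpha}_j=\underline{b}_j/(1-\tau_j)$ is rational (more generally a rational-preserving affine IFS has $\underline{\alpha}_j=-(A_j-I_d)^{-1}\underline{b}_j\in\mathbb{Q}^d$, since $1$ is not an eigenvalue of a contracting $A_j$). We may also assume $J\ge 2$, as for $J=1$ the set $C$ is a single rational point and the last two assertions are vacuous. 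Granting this, the first, uniform assertion is immediate from Theorem~\ref{thm110}: whichever of its two alternatives is being assumed — OSC together with a direction $\underline{v}\in\mathbb{Z}^d$ in which $C$ contains no line segment, or $D<1/2$ for the same $D=-\log\tau/\log J$ — is part of the present hypothesis, and \eqref{eq:hypot} holds by the above; so Theorem~\ref{thm110} furnishes $K=K(C)$ and, for every $\underline{\xi}\in C$ and $Q>1$, a solution $\underline{p}/q\in\mathbb{Q}^d\setminus C$ with $1\le q\le Q$ of $\Vert\underline{\xi}-\underline{p}/q\Vert\le K/Q$.

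The second assertion is equally immediate from Theorem~\ref{thm11}: $C$ is rational-preserving, some $f_j$ has rational fixed point, and all $f_j$ are one-to-one, so that theorem applies to the given null function $\Phi$ and produces an uncountable, hence nonempty, set of irrational $\underline{\xi}\in C$ for which $\Vert\underline{\xi}-\underline{p}/q\Vert>\Phi(Q)/Q$ holds for infinitely many $Q$ and all $\underline{p}/q\in\mathbb{Q}^d\setminus C$ with $1\le q\le Q$; any one of these is the asserted $\underline{\xi}\in C\setminus\mathbb{Q}$.

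For the third assertion I would rerun the intrinsic-Liouville construction that underlies Theorem~\ref{thm11}. Fix $f_j$ with rational fixed point $\underline{\alpha}:=\underline{\alpha}_j\in C\cap\mathbb{Q}^d$ and fix $i\neq j$ whose fixed point differs from $\underline{\alpha}$ (such $i$ exists, else $C=\{\underline{\alpha}\}$). For a finite word $w=(w_1,\dots,w_m)$ write $\pi_w=f_{w_1}\circ\cdots\circ f_{w_m}$; since the IFS is rational-preserving, $\pi_w(\underline{\alpha})\in C\cap\mathbb{Q}^d$, and as each single map enlarges denominators by at most a fixed factor $M$, the denominator of $\pi_w(\underline{\alpha})$ is at most $M^{m}\operatorname{denom}(\underline{\alpha})$. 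Prescribe the address of $\underline{\xi}$ block by block as $j^{n_1}i\,j^{n_2}i\,j^{n_3}i\cdots$: having chosen $n_1,\dots,n_\ell$, set $w^{(\ell)}=j^{n_1}i\cdots j^{n_\ell}i$, put $\underline{\eta}_\ell:=\pi_{w^{(\ell)}}(\underline{\alpha})\in C\cap\mathbb{Q}^d$ with denominator $q_\ell$, and choose $n_{\ell+1}$ so large that $\tau^{\,\vert w^{(\ell)}\vert+n_{\ell+1}}\operatorname{diam}(C)<\Psi(q_\ell)$. Since $\underline{\xi}$ and $\underline{\eta}_\ell$ both lie in $\pi_{w^{(\ell)}}(C)$, of diameter at most $\tau^{\vert w^{(\ell)}\vert}\operatorname{diam}(C)$, and the address of $\underline{\xi}$ continues past $w^{(\ell)}$ with $n_{\ell+1}$ further copies of $j$ — so the $\pi_{w^{(\ell)}}$-preimage of $\underline{\xi}$ lies within $\tau^{n_{\ell+1}}\operatorname{diam}(C)$ of $\underline{\alpha}$ — we get $\Vert\underline{\xi}-\underline{\eta}_\ell\Vert\le\tau^{\,\vert w^{(\ell)}\vert+n_{\ell+1}}\operatorname{diam}(C)<\Psi(q_\ell)$. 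Arranging the $\underline{\eta}_\ell$ to be pairwise distinct, as in the proof of Theorem~\ref{thm11}, shows $\underline{\xi}$ is intrinsically $\Psi$-approximable; running the construction with, say, $\Psi(q)=q^{-q}$ forces $\underline{\xi}\notin\mathbb{Q}^d$ (a rational $\underline{a}/b\ne\underline{\eta}_\ell$ has $\Vert\underline{a}/b-\underline{\eta}_\ell\Vert\ge 1/(bq_\ell)$, eventually contradicting $\Vert\underline{\xi}-\underline{\eta}_\ell\Vert<\Psi(q_\ell)$) and makes $\underline{\xi}$ a Liouville vector in $C$. I do not expect a genuine obstacle here — all the weight lies in Theorems~\ref{thm110} and~\ref{thm11} — and the only delicate points, the irrationality of the $\underline{\xi}$ above and the infinitude of the approximants, are exactly those already settled in the proof of Theorem~\ref{thm11}; in the present setting they become transparent because padding the address by a long run of $j$'s sharpens $\Vert\underline{\xi}-\underline{\eta}_\ell\Vert$ while leaving $q_\ell$ unchanged.
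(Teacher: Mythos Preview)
Your proposal is correct and follows essentially the same approach as the paper: the first two assertions are read off from Theorems~\ref{thm110} and~\ref{thm11} after verifying their hypotheses for a rational-preserving similarity IFS, and the third is obtained by rerunning the intrinsic-Liouville construction from the proof of Theorem~\ref{thm11}. Your write-up is more explicit than the paper's terse ``we deduce at once'', but the route and the key ingredients (rational fixed point $\underline{\alpha}_j=-(A_j-I_d)^{-1}\underline{b}_j$, injectivity of the similarities, the address $j^{n_1}i\,j^{n_2}i\cdots$ and its rational truncations in $C$) are identical.
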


As observed in Section~\ref{se1} a wider class of suitable IFS can be readily
found with the aid of~\cite{monthly}.
Any reasonable one-dimensonal
rational-preserving Cantor set with OSC, in particular any missing digit
Cantor set $C_{b,W}$ (with $W\subsetneq \{0,1,\ldots,b-1\}$), satisfies the assumptions 
of Theorem~\ref{yz}.
This first claim does not require the rationality of the IFS, however for
the other claims it is probably needed.
Intuitively, (similarity) Cantor sets
containing no Liouville vector should exist when we drop the rationality
condition.
For $d=1$ and the class of
topological Cantor sets, the existence was shown
in \cite{alvarez}. See also~\cite{erd} for fat Cantor sets without rational elements.
We remark further that for rational-preserving similarity 
Cantor sets induced by diagonal-matrices
$A_{j}$ that satisfy OSC, the existence of very well-approximable vectors 
that are no Liouville-vectors
was shown by S. Baker~\cite[Theorem~5.6]{baker}.
Notice also that the claim concerning Liouville vectors cannot be derived from standard metric results as 
in~\cite{fs} even for $C_{b,W}$, as the set of Liouville numbers
has Hausdorff dimension $0$ due to Jarn\'ik~\cite{jarnik}.

\subsection{Ordinary extrinsic approximation}

We use Theorem~\ref{known} and Theorem~\ref{argell} to 
obtain a lower bound on the distance of certain one-dimensional
Cantor sets
to a rational number not contained in it.
This can be viewed as a reverse
of Theorem~\ref{fs2}. For $A,B\subseteq \mathbb{R}$
we write $d(A,B)=\inf\{ \vert a-b\vert: a\in A, b\in B\}$ and denote by
$e=2.7182\ldots$ Euler's number.

\begin{theorem} \label{bfr} 
Let $C$ be a one-dimensional, affine, rational-preserving monic Cantor set, i.e. 
derived from an IFS as in \eqref{eq:hutt} with $p_{j}\in\{-1,1\}$, that
satisfies OSC.
Denote its Hausdorff dimension by $\Delta$.
Then for sufficiently large $\rho=\rho(C)$, we have the estimate
\begin{equation} \label{eq:000}  
d(C,\frac{p}{q}) > e^{-\rho q^{\Delta}}
\end{equation} 
for every $p/q\in \mathbb{Q}\setminus C$.

In the special case of missing digit Cantor sets $C=C_{b,W}$,
 we have $\Delta=\log \vert W\vert/\log b$ 
and the inequality
\begin{equation} \label{eq:null}
d(C,\frac{p}{q})> \frac{b^{-(2b)^{\Delta} q^{\Delta}}}{2q}
\end{equation}
holds for every $\frac{p}{q}\in \mathbb{Q}\setminus C$. 
In particular,
for every $\xi\in C_{b,W}$ and any 
$\delta>(2b)^{\Delta}$, the inequality
\begin{equation} \label{eq:0}  
\vert \xi-\frac{p}{q}\vert \leq b^{-\delta q^{\Delta}}
\end{equation} 
has only finitely many solutions 
$\frac{p}{q}\in \mathbb{Q}\setminus C$. 
\end{theorem}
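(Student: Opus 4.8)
The plan is to run the intrinsic Dirichlet-type results of Section~\ref{s2} ``in reverse''. Fix $p/q\in\mathbb{Q}\setminus C$ and, using compactness of $C$, choose $\xi\in C$ with $|\xi-p/q|=d(C,p/q)=:d$. Since $C$ is rational-preserving, any rational $p'/q'\in\mathbb{Q}\cap C$ produced as an intrinsic approximant to $\xi$ is distinct from $p/q$, whence $|p/q-p'/q'|=|pq'-p'q|/(qq')\geq 1/(qq')$; at the same time such an approximant makes $|\xi-p'/q'|$ small with $q'$ under control. The triangle inequality then gives
\[
\frac{1}{qq'}\ \leq\ |p/q-p'/q'|\ \leq\ d+|\xi-p'/q'|,
\]
and multiplying through by $q'$ isolates $d$; the crux is to choose the free parameter of the intrinsic theorem as a suitable power of $q$.

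For the general monic case \eqref{eq:000} I would apply Theorem~\ref{argell}: it is available because $C$ is one-dimensional, affine, rational-preserving as in \eqref{eq:hutt}, satisfies OSC, and is monic, so $\gamma=0$ in \eqref{eq:oppe}. Thus there is $K=K(C)$ such that for every $Q\geq\max_j q_j$ one finds $p'/q'\in\mathbb{Q}\cap C$ with $1\leq q'\leq Q$ and $|\xi-p'/q'|\leq Kq'^{-1}(\log Q)^{-1/\Delta}$. Substituting this and multiplying by $q'$ gives $q^{-1}\leq q'd+K(\log Q)^{-1/\Delta}\leq Qd+K(\log Q)^{-1/\Delta}$, so
\[
d\ \geq\ \frac{1}{Q}\Bigl(\frac{1}{q}-K(\log Q)^{-1/\Delta}\Bigr).
\]
Choosing $Q=e^{cq^{\Delta}}$ with $c=c(C)$ large enough that $K(\log Q)^{-1/\Delta}\leq\tfrac{1}{2q}$ yields $d\geq\tfrac{1}{2q}e^{-cq^{\Delta}}$; once $q$ exceeds a $C$-dependent threshold (which also secures $Q\geq\max_j q_j$) this exceeds $e^{-\rho q^{\Delta}}$ provided $\rho>c$. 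The finitely many remaining fractions with $p/q\in[0,1]$ and small $q$ lie at positive distance from $C$, while fractions with $p/q\notin[0,1]$ satisfy $d\geq 1/q$; enlarging $\rho=\rho(C)$ handles both, which establishes \eqref{eq:000}.

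For missing digit sets I would rerun the same computation with the sharper Theorem~\ref{known}: for every $Q\geq 1$ there is $p'/q'\in\mathbb{Q}\cap C$ with $1\leq q'\leq b^{Q^{\Delta}}$ and $|\xi-p'/q'|\leq b/(Qq')$, hence $q^{-1}\leq q'd+bQ^{-1}$ and so $d\geq b^{-Q^{\Delta}}(q^{-1}-bQ^{-1})$. The choice $Q=2bq$ makes $q^{-1}-bQ^{-1}=\tfrac{1}{2q}$ and $b^{Q^{\Delta}}=b^{(2b)^{\Delta}q^{\Delta}}$, giving $d\geq\tfrac{1}{2q}b^{-(2b)^{\Delta}q^{\Delta}}$; the strict inequality in \eqref{eq:null} is recovered by taking $Q$ marginally larger than $2bq$ or by isolating the degenerate equality case, in which $\xi$ must be the midpoint of two explicit rationals and so a controlled rational itself. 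Finally \eqref{eq:0} is immediate from \eqref{eq:null}: any $p/q\in\mathbb{Q}\setminus C$ with $|\xi-p/q|\leq b^{-\delta q^{\Delta}}$ satisfies $b^{-\delta q^{\Delta}}\geq d(C,p/q)>\tfrac{1}{2q}b^{-(2b)^{\Delta}q^{\Delta}}$, i.e.\ $2q>b^{(\delta-(2b)^{\Delta})q^{\Delta}}$; since $\delta-(2b)^{\Delta}>0$ the right-hand side grows faster than any polynomial in $q$, so $q$ is bounded, and for each admissible $q$ only finitely many $p$ put $p/q$ within distance $1$ of $\xi$.

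I expect the argument to be short once the intrinsic theorems are granted; the genuinely delicate points are keeping the factor $q'$ (rather than bounding $q'\leq Q$ prematurely), so that the weak error term $K(\log Q)^{-1/\Delta}$ only has to be defeated by $1/q$ rather than by the far smaller $1/(qQ)$ --- this is what makes the calibration succeed for all $\Delta$ --- together with calibrating $Q$ as a power of $q$ so that the exponent has the right size $q^{\Delta}$ with precisely the constant $(2b)^{\Delta}$ in the missing-digit case, and the essentially cosmetic upgrade from a non-strict to a strict inequality and the bookkeeping for small or out-of-range denominators.
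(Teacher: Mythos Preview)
Your proposal is correct and follows the same overall strategy as the paper: apply the intrinsic Dirichlet-type result (Theorem~\ref{known} or Theorem~\ref{argell}) to produce $p'/q'\in\mathbb{Q}\cap C$ close to a point $\xi\in C$, observe that $p'/q'\neq p/q$ since one lies in $C$ and the other does not, and extract a lower bound for $|\xi-p/q|$ from this separation; the calibration $Q=2bq$ in the missing-digit case and $Q=e^{cq^{\Delta}}$ with $c\geq(2K)^{\Delta}$ in the general monic case is exactly what the paper does.

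The one noteworthy difference is in how the separation is converted into the bound. You use the elementary estimate $|p/q-p'/q'|\geq 1/(qq')$ and the triangle inequality, which is entirely sufficient and yields precisely $|q\xi-p|\geq 1/(2q')$ after your multiplication-by-$q'$ step. The paper instead sets up the framework of parametric geometry of numbers (successive minima of the lattice $\Lambda_{\xi}$ with respect to a family of boxes, Minkowski's second theorem in logarithmic form) to reach the same conclusion $|q\xi-p|\geq 1/(2m)$. Your route is shorter and avoids this apparatus; the paper's choice is presumably made with an eye toward the later proof of Theorem~\ref{expon}, where the same framework is reused. A second cosmetic difference is that the paper works with an arbitrary irrational $\xi\in C$ and closes via density of $C\setminus\mathbb{Q}$ in $C$, whereas you take $\xi$ to be the distance-realising point directly; both are fine, and your version sidesteps the density argument.
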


We remark that 
in view of Theorem~\ref{ver}, it is possible to provide a variant of Theorem~\ref{bfr} 
that does not require OSC for the cost of (possibly) increasing $\Delta$ slightly.
We do not believe that the 
bounds are optimal. It might be true that there is an absolute upper
bound for the order of extrinsic approximation, equivalently
$\lambda_{ext}(\xi)\ll_{C} 1$ with the notation of Section~\ref{exponi} below.  
However, a metric result of Fishman and Simmons~\cite[Theorem~3.9]{fishsimmons}
demonstrates that we cannot hope an improvement 
of Theorem~\ref{known} and Theorem~\ref{argell} underlying our proof.
Hence the bounds in Theorem~\ref{bfr} are the limit of our method. 
Our proof does not extend to non-monic Cantor sets, as we require
that the right hand side in Theorem~\ref{argell}
decays faster than $q^{-1}$. This can only be guaranteed if 
$\gamma$ in \eqref{eq:oppe} vanishes. Also for $d>1$ we see that 
Cantor sets as in Theorem~\ref{ver}
do not fufill the requirement.
In these cases, no bound 
seems to be known. We formulate resulting open problems.

\begin{problem}
	Improve the bounds in Theorem~\ref{bfr}. Do there exist extrinsic Liouville 
	numbers, i.e. numbers with $\lambda_{ext}(\xi)=\infty$ in the notation of Section~\ref{exponi} below?
\end{problem}

\begin{problem}
Let $C$ be a one-dimensional, affine, 
rational-preserving Cantor set which is not necessarily monic. 
For $p/q\notin C$, find a lower bound
for $d(C,\frac{p}{q})$ in dependence of $q$. What about Cantor sets
in higher dimensional Euclidean space?	
\end{problem}

We believe the latter problem is related to Problem~\ref{problema} in
Section~\ref{se4} below. 
For $C=C_{b,W}$, an elementary approach, using that the base $b$ expansion
of a rational number $p/q$ in lowest terms
has period length $\ll q$, indicates the
bound
\begin{equation} \label{eq:null1}
d(C,\frac{p}{q})> b^{-cq}, \qquad\qquad
\text{for any} \quad \frac{p}{q}\in \mathbb{Q}\setminus C,
\end{equation}
with a suitable constant $c=c(b,W)>0$.
See Section~\ref{rel} below, in particular Proposition~\ref{pr}, for more details.
However, the conclusion \eqref{eq:null} is stronger than \eqref{eq:null1}
since $\Delta<1$.

We remark that similar patterns as in Theorem~\ref{bfr} 
are known concerning extrinsic rational
approximation to algebraic sets in $\mathbb{R}^{n}$.
See~\cite[Lemma~1]{bdl}, \cite[Lemma~1]{dd}, \cite[Lemma~4.1.1]{drutu},
\cite[Theorem~2.1]{ichmh}. %pruefen ob mein resultat korrekt zitiert
However, the lower bounds typically decay like a negative power of $q$
in that case. In~\cite{ichmh}
it is shown that for an algebraic variety $S$ defined 
by an implicit integral polynomial equation of
total degree $k$, there are no rational numbers outside $S$ that
approximate $S$ of order greater than $k$. We refer to~\cite{4}
for approximation to manifolds by rationals within the manifold.

\section{Related topics} \label{4}

\subsection{Rational/Algebraic vectors in Cantor sets}

As a byproduct of the proof of Theorem~\ref{ver},
we show the following upper bound for the number
of rational elements with denominator
at most $N$ in a Cantor set. 

\begin{theorem} \label{yzok}
	Let $C\subseteq \mathbb{R}^{d}$ be any Cantor set as in Definition~\ref{deff}. 
	Let $\tau=\max_{1\leq j\leq J} \tau_{j}\in(0,1)$
	be the maximum contraction rate of the IFS and denote by $diam$ the 
	diameter $\max\{\Vert \underline{x}-\underline{y}\Vert: \underline{x},\underline{y}\in C\}$ of the compact Cantor set.
	Let $D=-\log J/\log \tau>0$. 
	Then the set of rational vectors in $C$ up to height $N$
	\[
	\mathcal{S}(C,N)= 
	\{ \underline{r}/s\in C: (r_{1},\ldots,r_{d},s)=1,\; 1\leq s\leq N\},
	\]
	has cardinality at most $J^{2}diam^{D}N^{2D}$.
\end{theorem}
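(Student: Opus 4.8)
The plan is to exploit the self-similar structure of $C$ together with the fact that a rational vector $\underline r/s$ lying deep inside the fractal forces its denominator to grow as we zoom in. First I would fix the level $n$ of iteration and look at the $J^n$ cylinder sets $f_{\omega_1}\circ\cdots\circ f_{\omega_n}(C)$, each of diameter at most $\tau^n\cdot diam$. Given $\underline r/s\in\mathcal S(C,N)$, it lies in at least one such cylinder, say $E_\omega=f_{\omega_1}\circ\cdots\circ f_{\omega_n}(C)$. The key observation is that any two \emph{distinct} rational vectors $\underline r/s$ and $\underline r'/s'$ with denominators $\le N$ are separated by at least $1/(ss')\ge 1/N^2$ in the supremum norm (their difference has a common denominator dividing $ss'\le N^2$, and if nonzero its sup-norm is at least $1/(ss')$). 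Therefore, if $\tau^n\cdot diam < 1/N^2$, i.e. if
\[
\tau^n < \frac{1}{diam\cdot N^2},
\]
then each cylinder $E_\omega$ at level $n$ can contain \emph{at most one} element of $\mathcal S(C,N)$, giving the crude bound $|\mathcal S(C,N)|\le J^n$.

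The second step is to choose $n$ as small as possible subject to the constraint above, and to estimate $J^n$. Writing the constraint as $n\log(1/\tau) > \log(diam\cdot N^2)=\log diam + 2\log N$, it suffices to take
\[
n = \left\lceil \frac{\log diam + 2\log N}{\log(1/\tau)}\right\rceil \le \frac{\log diam+2\log N}{\log(1/\tau)} + 1 .
\]
Then, using $D=-\log J/\log\tau = \log J/\log(1/\tau)$, one computes
\[
J^n \le J\cdot J^{(\log diam+2\log N)/\log(1/\tau)} = J\cdot diam^{\,D} N^{2D},
\]
since $J^{1/\log(1/\tau)} = e^{\log J/\log(1/\tau)} = e^{D}$ is not quite what appears, so more carefully $J^{(\log diam)/\log(1/\tau)} = diam^{\log J/\log(1/\tau)} = diam^{D}$ and similarly the $N$-term contributes $N^{2D}$; the leftover ceiling contributes one extra factor of $J$. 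This already yields $|\mathcal S(C,N)|\le J\cdot diam^{D} N^{2D}$, which is even a bit stronger than the stated $J^2 diam^{D} N^{2D}$; the extra factor of $J$ in the statement gives comfortable slack (e.g. to absorb a possible issue when $diam<1$, where one should instead use $\max(1,diam)^D$, or when the ceiling needs a second unit bump).

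The main obstacle I anticipate is purely a bookkeeping one: making the choice of $n$ robust when $diam\le 1$ or $N$ is small, so that the ceiling does not cost more than the single spare factor of $J$ built into the statement. One handles this by replacing $diam$ by $\max(1,diam)$ inside the logarithm in the choice of $n$ — which only helps, since larger cylinders are what matter — and noting that $\max(1,diam)^D\le diam^D$ fails but $\le$ something controlled; in the regime $diam\ge 1$ (the only interesting one, as otherwise $C$ is a single point up to scaling and the count is trivial) the computation above goes through verbatim. No deep input is needed: the whole argument is the separation-of-rationals pigeonhole against the exponentially shrinking cylinders, and the exponent $2D$ is exactly the price of the $1/N^2$ separation (rather than $1/N$) for rationals in lowest terms with denominator bounded by $N$.
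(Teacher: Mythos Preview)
Your proposal is correct and takes essentially the same approach as the paper: the paper's proof packages the cylinder pigeonhole step as a separate Proposition (if a subset of $C$ has more than $J^l$ points, two of them lie within $diam\cdot\tau^l$), then combines it with the same $1/N^2$ separation estimate for distinct rationals of height $\le N$ and the same choice of level $l$; your argument is the same, just with the proposition unpacked inline. Your observation that the ceiling costs only one factor of $J$ (giving $J\,diam^{D}N^{2D}$) matches the paper's computation, which also arrives at $J^{2}diam^{D}N^{2D}$ after an analogous rounding, so the extra factor of $J$ in the statement is indeed just slack.
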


We require $D<1$ or equivalently $J\tau<1$ for the result to be non-trivial.
When $C=C_{3,\{0,2\}}$ is the Cantor middle third set, the bound becomes
$4N^{2\log 2/\log 3}$. The exponent
has twice the expected magnitude, indeed in view of
numeric evidence it was conjectured
in~\cite{1} that $\mathcal{S}(C,N)\ll_{\epsilon} N^{\log 2/\log 3+\epsilon}$
for any $\epsilon>0$.
We can extend our claim to algebraic vectors.

\begin{theorem} \label{yok}
	With the assumptions and notation
	of Theorem~\ref{yzok}, let $\mathcal{S}(C,N,n)$ be the set of
	vectors in $C$ whose entries are real algebraic numbers of 
	degree at most $n$ and height 
	at most $N$. Then 
	\[
	\vert \mathcal{S}(C,N,n)\vert \ll_{C,n} N^{2nD}.
	\]
\end{theorem}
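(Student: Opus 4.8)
The plan is to reduce Theorem~\ref{yok} to Theorem~\ref{yzok} via a counting argument over the heights of the defining polynomials. First I would recall that a real algebraic number $\alpha$ of degree at most $n$ and height at most $N$ is a root of an integer polynomial $P(X)=a_nX^n+\cdots+a_1X+a_0$ with $\max_i|a_i|\le N$, and that the set of such polynomials has cardinality $\ll_n N^{n+1}$. The key geometric idea is that a vector $\underline{\xi}\in C$ with algebraic coordinates $\xi_1,\ldots,\xi_d$ of degree at most $n$ and height at most $N$ lies in a bounded region, so each $\xi_i$ is a root of one of these $O_n(N^{n+1})$ polynomials in a fixed interval of bounded length (say $[-\mathrm{diam}-\|\underline{\alpha}_1\|,\mathrm{diam}+\|\underline{\alpha}_1\|]$ or just some compact interval $I\supseteq\pi_i(C)$, the $i$-th coordinate projection of $C$).

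Next I would fix, for each coordinate $i$, an integer polynomial $P_i$ of degree at most $n$ and height at most $N$; there are $\ll_n N^{n+1}$ choices for the $d$-tuple $(P_1,\ldots,P_d)$, i.e. $\ll_{d,n} N^{d(n+1)}$ choices. The locus $Z=\{\underline{x}\in I^d : P_1(x_1)=\cdots=P_d(x_d)=0\}$ is finite, of cardinality at most $n^d$. So far this only bounds $|\mathcal{S}(C,N,n)|$ by $\ll_{C,n} N^{d(n+1)}$, which is too weak for large $d$. The point is that I should not bound all $d$ coordinates simultaneously this way; instead I want to exploit that the defining data is really a single polynomial per coordinate but that $C$ itself is the constraining object, and Theorem~\ref{yzok} already controls rational points. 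So the better route is: approximate the algebraic vector by a rational vector using the Mahler/Liouville-type separation for algebraic numbers, namely if $\xi$ is algebraic of degree $\le n$ and height $\le N$, then $\xi$ is within distance $\ll_n N^{-n}$ of a rational $p/q$ with $q\ll_n N^n$ (take a convergent, or round $\xi$ to denominator $\lceil N^n\rceil$), and simultaneously any two distinct algebraic numbers of degree $\le n$, height $\le N$ are separated by $\gg_n N^{-2n+1}$ (Mahler's separation theorem).

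Thus the cleaner argument, which I would actually carry out, is a covering/separation scheme: cover the interval $I$ by $\asymp_n N^{2n}$ sub-intervals of length $\asymp_n N^{-2n}$; by the separation bound, each sub-interval contains at most one algebraic number of degree $\le n$ and height $\le N$, so any algebraic vector in $\mathcal{S}(C,N,n)$ is determined by a $d$-tuple of such sub-intervals, but crucially each such vector is within $\ll_n N^{-2n}$ of the rational vector $\underline{r}/s$ obtained by taking rational centres of the sub-intervals with common denominator $s\asymp_n N^{2n}$. The issue is that this rational vector need not lie in $C$. To fix this, I would instead round each algebraic coordinate directly to a rational with denominator $s=\lceil c N^{2n}\rceil$ and invoke that $C$ is contained in an $\ll N^{-2n}$-neighbourhood covered by translates; more robustly, I would use that distinct elements of $\mathcal{S}(C,N,n)$ are $\gg_n N^{-(2n-1)d}$-separated in the sup norm (coordinatewise separation) and then a volume/packing argument inside the compact set $C$: but $C$ has measure zero, so I instead map into $\mathcal{S}(C\oplus\text{small ball}, \ldots)$ — this is getting delicate.

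The honest main obstacle, and the step I expect to be hardest, is passing from algebraic coordinates to rational coordinates while staying inside $C$ so that Theorem~\ref{yzok} applies. I expect the resolution to be: enlarge the target by replacing $C$ with $C$ itself but counting rational points of denominator $\asymp_n N^{2n}$ that are $O_n(N^{-2n})$-close to $C$ — and to note that each algebraic vector of height $\le N$, degree $\le n$ lying in $C$ forces, after rounding each coordinate to the nearest multiple of $1/s$ with $s=\lceil N^{2n}\rceil$, a point that is either in $C$ or within $1/s$ of $C$; the number of such "near-$C$" rationals of denominator $\le s$ is, by the self-similarity covering argument underlying Theorem~\ref{yzok} (cover $C$ by $\ll J^{\log_\tau(1/s)}=\ll s^D$ pieces of diameter $\ll 1/s$, each containing $O(1)$ rationals of denominator exactly $s$ in its $1/s$-neighbourhood), at most $\ll_C s^{2D}\ll_{C,n} N^{2nD}$, up to the bounded-to-one correspondence. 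Collecting the $O_n(1)$ rounding choices and the $O_n(1)$ multiplicity from separation then yields $|\mathcal{S}(C,N,n)|\ll_{C,n}N^{2nD}$, matching Theorem~\ref{yzok} when $n=1$. I would present this as: "repeat the proof of Theorem~\ref{yzok} with $N$ replaced by $N^n$ and with rational points replaced by algebraic points, using Mahler's separation theorem in place of the trivial bound $|r/s-r'/s'|\ge 1/(ss')$."
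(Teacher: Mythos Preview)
Your very last sentence is the correct argument and is exactly what the paper does: replace the separation bound $|r/s-r'/s'|\ge 1/(ss')\ge N^{-2}$ used in the proof of Theorem~\ref{yzok} by Liouville's inequality $|\alpha-\alpha'|\gg_n H(\alpha)^{-n}H(\alpha')^{-n}\ge N^{-2n}$ for distinct algebraic numbers of degree at most $n$, and then run Proposition~\ref{ppr} verbatim with $N^{-2}$ replaced by $c_n N^{-2n}$.

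Everything preceding that sentence is a detour. In particular, the ``main obstacle'' you identify --- passing from algebraic coordinates to rational ones while staying inside $C$ --- is not an obstacle at all, because the proof of Theorem~\ref{yzok} never uses rationality beyond the separation estimate~\eqref{eq:wird}. Proposition~\ref{ppr} applies to \emph{any} finite subset $E\subseteq C$: if $|E|>J^{l}$ then two points of $E$ lie within $diam\cdot\tau^{l}$ of each other. Taking $E=\mathcal{S}(C,N,n)$ and combining with the Liouville lower bound $\gg_n N^{-2n}$ forces $l\ll_{C,n}\log(N^{2n})$, hence $|E|\le J^{l}\ll_{C,n} N^{2nD}$. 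No rounding to rationals, no near-$C$ covering, and no polynomial counting is needed.
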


Clearly the trivial bound on $\mathcal{S}(C,N,n)$ is of order
$ N^{d(n+1)}$, so if $D\leq d/2$ or $J\tau^{d/2}<1$ we obtain an improvement simultaneously
for every $n\geq 1$. For the Cantor middle third set we get an improvement for
$n\in\{1,2,3\}$.
In case of affine, rational-preserving Cantor sets one might expect
$\mathcal{S}(C,N,n)=\mathcal{S}(C,N)$ for any $n\geq 1$, i.e. 
there are no irrational algebraic
vectors in $C$. For the Cantor middle-third set $C_{3,\{0,2\}}$ this is a famous
open conjecture of Mahler~\cite{mahler}. It seems that even in this case
no non-trivial bound on $\mathcal{S}(C,N,n)$ had been established previously. 

\subsection{Adresses of rational vectors in certain Cantor sets} \label{se4}

We want to discuss addresses of rational vectors in $C$, as the above sections
indicate that they are directly linked to the order of
rational approximation to Cantor sets. This topic has already been addressed
in \cite{fs} and
Theorem~\ref{pbasic} below generalizes~\cite[Lemma~4.2]{fs} to 
certain multi-dimensional settings. 
We also provide a quantitative version for period lengths. 
While our proof strategy resembles the one in~\cite{fs} to some extent,
we proceed slightly different. 

\begin{definition} \label{4def}
	We call an affine, rational-preserving IFS {\em unimodular} if the 
	contraction maps are of the form
 $f_{j}(\underline{y})=A_{j}\underline{y}/q_{j}+\underline{b}_{j}/s_{j}$
	where $A_{j}\in \mathbb{Z}^{d\times d}$ with $\det(A_{j})\in\{1,-1\}$, and $\underline{b}_{j}\in\mathbb{Z}^{d}$
	and $q_{j},s_{j}\in\mathbb{N}$. We call a Cantor set $C$ unimodular if it
	is induced by an unimodular IFS.
\end{definition}

Unimodularity generalizes the concept of a monic, affine IFS 
for $d=1$ from Section~\ref{s2}. Note that our setup is more general
than assuming $f_{j}(\underline{y})=(A_{j}\underline{y}+\underline{b}_{j})/q_{j}$, i.e. $s_{j}=q_{j}$, with 
all parameters as in the definition, as the determinant condition is relaxed.
Therefore our next theorem in fact even generalizes~\cite[Lemma~4.2]{fs} when $d=1$.

\begin{theorem} \label{pbasic}
	Let $C\subseteq \mathbb{R}^{d}$ be an affine, rational-preserving Cantor set.
	Then any vector in $C$ that admits an ultimately periodic address is in $\mathbb{Q}^{d}$.
	If $C$ is unimodular, the rational vectors 
	in $C$ are precisely those vectors in $C$ that have 
	an ultimately periodic address. 
	Moreover, in this case the period length (including preperiod) of
	$\underline{p}/q\in \mathbb{Q}\cap C$ can be chosen 
	$\ll_{C} \min\{q^{D}, q^{d}\}$ with $D=-\log J/\log \tau$ as in Theorem~\ref{yzok}. 
\end{theorem}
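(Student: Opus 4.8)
The plan is to split the statement into three parts and treat them in order of increasing difficulty. For the first claim, suppose $\underline{\xi}\in C$ has an ultimately periodic address $(\omega_1,\ldots,\omega_k,\overline{\omega_{k+1},\ldots,\omega_{k+\ell}})$. Write $g=f_{\omega_{k+1}}\circ\cdots\circ f_{\omega_{k+\ell}}$, which is again an affine contraction with rational coefficients (composition of rational-preserving affine maps is rational-preserving affine), say $g(\underline{y})=M\underline{y}+\underline{c}$ with $M\in\mathbb{Q}^{d\times d}$, $\underline{c}\in\mathbb{Q}^d$ and operator norm $<1$; then $1$ is not an eigenvalue of $M$, so its unique fixed point $\underline{\alpha}=(I_d-M)^{-1}\underline{c}\in\mathbb{Q}^d$ is the element of $C$ with the purely periodic tail address. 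Applying the preperiod $f_{\omega_1}\circ\cdots\circ f_{\omega_k}$ (again rational-preserving affine) sends $\underline{\alpha}$ to $\underline{\xi}$, hence $\underline{\xi}\in\mathbb{Q}^d$. This is the easy direction and requires only Definition~\ref{deff}.

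For the converse under unimodularity, let $\underline{p}/q\in C\cap\mathbb{Q}^d$ in lowest terms. I track the orbit under the inverse-selection process: since $\underline{p}/q\in C=\cup_j f_j(C)$, there is some $j_1$ with $\underline{p}/q\in f_{j_1}(C)$, so $f_{j_1}^{-1}(\underline{p}/q)\in C$. The key point is to control denominators. With $f_j(\underline{y})=A_j\underline{y}/q_j+\underline{b}_j/s_j$ and $\det A_j=\pm1$, the inverse is $f_j^{-1}(\underline{z})=(q_j A_j^{-1})(\underline{z}-\underline{b}_j/s_j)$, and $q_j A_j^{-1}=q_j\,\mathrm{adj}(A_j)/\det(A_j)$ has integer entries. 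Writing $\underline{z}=\underline{r}/t$ with $t\mid S\cdot q$ (where $S=\prod s_j$), a short computation shows $f_j^{-1}(\underline{r}/t)=\underline{r}'/t'$ with $t'\mid S\cdot t$... but that grows. The right normalization: one should instead show that along the address the denominator, after clearing the fixed factor $S$, stays bounded — more precisely that every iterate lies in the finite set of rationals in $C$ whose denominator divides $S\cdot q$ times a bounded power. Concretely, I claim all iterates $f_{j_i}^{-1}\circ\cdots\circ f_{j_1}^{-1}(\underline{p}/q)$ lie in the set $\mathcal{R}_q:=\{\underline{r}/t\in C: t\le c(C)\,q\}$ for a constant $c(C)$ depending only on $S$ and the $q_j$; since $C$ is compact this set is finite, so the orbit eventually repeats a value, and the choices $j_i$ can then be made periodic, producing a periodic address. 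Together with the first part this gives the equivalence.

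For the period-length bound, once finiteness of $\mathcal{R}_q$ is established, the period (including preperiod) is at most $|\mathcal{R}_q|$. The bound $|\mathcal{R}_q|\ll_C q^d$ is immediate from counting rationals in $[0,1]^d$ (after an affine normalization bringing $C$ into a box) with bounded denominator. The bound $|\mathcal{R}_q|\ll_C q^D$ with $D=-\log J/\log\tau$ is exactly the counting input from Theorem~\ref{yzok}: the rationals in $C$ with denominator $\le c(C)q$ number at most $J^2\,diam^D(c(C)q)^{2D}$... which is $q^{2D}$, not $q^D$. To get the sharper $q^D$ one must be more careful: the iterates do not range over \emph{all} of $\mathcal{R}_q$ but form a single orbit, and one exploits that $f_{j_i}^{-1}$ is injective, so the orbit has no repeats until it first returns — combined with a dimension-type packing bound (the points of a length-$n$ orbit are $\tau^{-n}$-separated after blow-up, yet confined to a fixed region), forcing $n\ll q^D$. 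I expect the main obstacle to be precisely this last point: proving the denominator stays $\ll_C q$ along the whole orbit (the naive bound lets it grow geometrically), and then sharpening the count from $q^{2D}$ to $q^D$. The denominator control should follow from the unimodularity $\det A_j=\pm1$ making $f_j^{-1}$ "integer-linear up to the fixed denominators $q_j,s_j$", so that the only denominator growth is through the bounded factor $S$, which is absorbed into the constant; this is the place where the hypothesis genuinely enters and where care is needed.
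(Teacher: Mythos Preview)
Your first two parts match the paper's approach: the forward direction is exactly the fixed-point computation you describe, and the converse proceeds by tracking the backward orbit $\underline{a}_n=\pi_n^{-1}\circ\cdots\circ\pi_1^{-1}(\underline{p}/q)$ along a fixed address, showing the orbit lives in a finite set of rationals in $C$, and applying pigeonhole. Your denominator worry is easily resolved, and more cleanly than you suggest: the denominators do not grow at all beyond the single factor $S=\prod_j s_j$. Indeed, suppose the current iterate is $\underline{r}/(qS)$ with integer $\underline{r}$. Since $s_j\mid S$, we have $\underline{b}_j/s_j=(S/s_j)\underline{b}_j/S$, so $\underline{r}/(qS)-\underline{b}_j/s_j=(\underline{r}-q(S/s_j)\underline{b}_j)/(qS)$ still has denominator dividing $qS$; multiplying by the integer matrix $q_jA_j^{-1}$ (here unimodularity is used) preserves this. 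Thus every $\underline{a}_n$ has denominator dividing the fixed integer $qS$, and the constant $c(C)$ you sought is simply $S$.

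The genuine gap is in the sharpening from $q^{2D}$ to $q^D$. Your proposed orbit-separation argument does not work: there is no reason the backward iterates $\underline{a}_0,\underline{a}_1,\ldots$ should be mutually $\tau^{-n}$-separated, since each lies in the fixed bounded set $C$ and the expansion factor of $f_j^{-1}$ only controls distances \emph{under} the map, not between successive orbit points. The paper's fix is much simpler and comes directly from the denominator control above. Since every $\underline{a}_n$ has denominator dividing the \emph{same} integer $qS$, any two distinct ones satisfy $\Vert\underline{a}_m-\underline{a}_n\Vert\geq 1/(qS)$ rather than merely $\geq 1/(qS)^2$. Feeding this improved gap into Proposition~\ref{ppr} (the packing lemma underlying Theorem~\ref{yzok}) removes one factor of $N$ in the exponent: one needs $diam\cdot\tau^{l}<1/(qS)$, giving at most $J^{l}\ll_C (qS)^{D}\ll_C q^{D}$ possible values. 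This is precisely what the paper means by ``we may remove the factor $2$ in the exponent since all denominators divide $q$''.
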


We remark that the bound supposedly can be reasonably improved, potentially up
to $\ll \log q$. See  the remark after the proof and also~\cite[Theorem~5.3, Conjecture~5.6]{fs}. 
For $d=1$,
the problem if the assumption of $C$ being unimodular (=monic) is necessary for the conclusion was already raised in~\cite[Section~5]{fs}. 
We have no new contribution to this interesting question, 
but want to formulate the corresponding generalization.

\begin{problem} \label{problema}
	In a Cantor set $C$ derived from an affine rational-preserving IFS, does {\em every}
	rational vector in $C$ have an ultimately periodic address? If yes, can it be
	arranged that the period length
	of $\underline{p}/q\in \mathbb{Q}\cap C$ is of order $\ll_{C} \min\{q^{D}, q^{d}\}$?
\end{problem}

From Theorem~\ref{pbasic} we get some information
on the structure of rational numbers in missing digit Cantor sets. We only highlight special consequences, from its proof below more information can be extracted.

\begin{corollary} \label{sophieg}
	Let $b\geq 3$ and $W\subsetneq \{0,1,\ldots,b-1\}$. 
	If $\mathcal{S}=\{q_{1},\ldots,q_{v}\}$ is any finite set of prime numbers 
	not dividing $b$, 
	then
	there are only finitely many rational numbers $r/s$ in $C_{b,W}$
	with $s$ consisiting only of prime factors in $\mathcal{S}$. In particular
	only finitely many integer powers of a rational number $p/q$ with $(b,q)=1$
	can lie in $C_{b,W}$.
	Moreover, $C_{b,W}$ contains at most finitely many rational numbers $p/q$ 
	where $q$ is a safe prime,
	i.e. $q$ and $(q-1)/2$ are both prime numbers.
\end{corollary}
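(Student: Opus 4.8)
The plan is to deduce all three assertions from a single consequence of Theorem~\ref{pbasic} together with an elementary estimate on multiplicative orders. The preliminary observation is that $C=C_{b,W}$ is \emph{monic} --- its contractions are $f_{j}(y)=y/b+w_{j}/b$, all with ratio $1/b$ --- hence unimodular, that the quantity $D=-\log J/\log\tau$ appearing in Theorem~\ref{pbasic} (via Theorem~\ref{yzok}) equals $\log|W|/\log b=\Delta$, and that $\Delta<1$ since $W\subsetneq\{0,\dots,b-1\}$. As $d=1$, Theorem~\ref{pbasic} then says that every reduced $p/q\in\mathbb{Q}\cap C$ has an ultimately periodic address, of total length (preperiod $m$ plus period $\ell$) satisfying $m+\ell\ll_{C}q^{\Delta}$. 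If moreover $\gcd(b,q)=1$, then, unwinding the address --- whose digit sequence $(w_{\omega_{k}})_{k}$ is by definition a base-$b$ expansion of $p/q$ --- we get $p/q=N/\bigl(b^{m}(b^{\ell}-1)\bigr)$ for some integer $N$, so $q\mid b^{\ell}-1$, whence $\mathrm{ord}_{q}(b)\mid\ell$ and therefore $\mathrm{ord}_{q}(b)\le\ell\le m+\ell\ll_{C}q^{\Delta}$. This bound $\mathrm{ord}_{q}(b)\ll_{C}q^{\Delta}$ is the only input I would take from the body of the paper.

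For the first assertion I would let $r/s\in C$ with $s$ supported on the primes of $\mathcal{S}=\{q_{1},\dots,q_{v}\}$; passing to lowest terms only removes prime factors and, since no $q_{i}$ divides $b$, makes $\gcd(b,s)=1$, so I may assume $r/s$ reduced with $s=\prod_{i}q_{i}^{a_{i}}$ coprime to $b$ (the case $s=1$ being trivial). The heart of the matter is the matching lower bound: by ``lifting the exponent'' there are constants $c_{i}=c_{i}(b,q_{i})$ such that $q_{i}^{\max(0,a_{i}-c_{i})}$ divides $\mathrm{ord}_{q_{i}^{a_{i}}}(b)$, hence divides $\mathrm{ord}_{s}(b)=\mathrm{lcm}_{i}\,\mathrm{ord}_{q_{i}^{a_{i}}}(b)$; being powers of distinct primes, these multiply to give $\mathrm{ord}_{s}(b)\ge s/C_{1}$ with $C_{1}=\prod_{i}q_{i}^{c_{i}}$ depending only on $\mathcal{S}$ and $b$. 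Combining with $\mathrm{ord}_{s}(b)\ll_{C}s^{\Delta}$ and $\Delta<1$ bounds $s$, hence bounds the set of admissible denominators, hence (only finitely many numerators in $[0,1]$ per denominator) the set of admissible rationals.

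The other two assertions I would then read off. If $p/q$ in lowest terms is $p_{1}/q_{1}$, then $(p/q)^{k}=p_{1}^{k}/q_{1}^{k}$ is again reduced, with denominator supported on the fixed finite set of primes of $q_{1}$, all coprime to $b$; so the first assertion applies, and since the $(p/q)^{k}$ are pairwise distinct unless $q_{1}=1$ (when $p/q\in\{0,1\}$ and all powers coincide), only finitely many powers lie in $C_{b,W}$. For a safe prime $q$, write $q-1=2q'$ with $q'$ prime and discard the finitely many $q\le b^{2}-1$; for the rest $\gcd(b,q)=1$ and $q\nmid b^{2}-1$, so $\mathrm{ord}_{q}(b)$ divides $2q'$ but equals neither $1$ nor $2$, forcing $\mathrm{ord}_{q}(b)\ge(q-1)/2$. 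Together with $\mathrm{ord}_{q}(b)\ll_{C}q^{\Delta}$ and $\Delta<1$ this bounds $q$, and each such $q$ admits only finitely many numerators.

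I expect the main obstacle to be the bridge in the first paragraph: one must make sure the purely combinatorial period bound of Theorem~\ref{pbasic} genuinely controls the arithmetic quantity $\mathrm{ord}_{q}(b)$ (using $\gcd(b,q)=1$ to pass from ``some short address'' to the divisibility $q\mid b^{\ell}-1$), and be careful with the degenerate cases $s=1$, unreduced fractions, and $\gcd(b,q)>1$. The order estimate $\mathrm{ord}_{s}(b)\gg_{\mathcal{S},b}s$ is the second delicate point: the naive bound $\mathrm{ord}_{s}(b)\ge\max_{i}\mathrm{ord}_{q_{i}^{a_{i}}}(b)$ is far too weak, and one really needs that the $q_{i}$-parts of the local orders are pairwise coprime, so that their least common multiple is comparable to their product, which in turn is comparable to $s$.
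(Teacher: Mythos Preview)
Your proposal is correct and follows essentially the same route as the paper: both arguments extract from Theorem~\ref{pbasic} the upper bound $\mathrm{ord}_{q}(b)\ll_{C}q^{\Delta}$ for reduced $p/q\in C_{b,W}$ with $\gcd(b,q)=1$ (the paper does this via Proposition~\ref{pr}, you via the explicit address-to-expansion bridge), and then confront it with the lifting-the-exponent lower bound $\mathrm{ord}_{s}(b)\gg_{\mathcal{S},b}s$ (the paper cites \cite[Lemma~3]{blo} for this), respectively with the divisor structure of $q-1$ in the safe-prime case. Your treatment is slightly more explicit in spelling out the ``in particular'' about integer powers and in flagging the coprimality step that makes the local $q_{i}$-parts of the orders multiply rather than merely maximize, but the substance is the same.
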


It was recently pointed out to me by Michael Coons and Igor Shparlinki 
that it can be inferred from Korobov~\cite{korobov}
that the largest prime divisor of a denominator $s$
of $r/s\in C_{b,W}$ written in lowest terms is at least 
$\gg \sqrt{\log s \log\log s}$, implying the first two claims of the corollary.   
However, I was unable to find this deduction explicitly
in the literature. 
The method in~\cite{korobov} is based on exponential
sum estimates, unrelated to our proof below. 

Results of this type (restrictions on rationals in $C_{b,W}$) 
appear to be rare. Apart from~\cite{korobov}, the author is only aware of a succession of
papers by Wall~\cite{wall}, 
Nagy~\cite{nagy} and Bloshchitsyn~\cite{blo}, treating rationals with  
very smooth denominators. The first claim of Corollary~\ref{sophieg} 
extends~\cite[Theorem~2]{blo} (and thus also~\cite{nagy}) 
where the finiteness implication was proved for $\mathcal{S}=\{q\}$ a single prime
greater than $b^{2}$. For the second claim about powers of a rational
in fact we only require that the denominator $q$ has a prime divisor
that does not divide $b$, or equivalently the radical
of $q$ does not divide $b$. This condition is easily seen to be
necessary in general, as for example
any positive integer power of $1/3$ belongs to $C_{3,\{0,1\}}$.
For irrational numbers we state the analogous question as an open problem.

\begin{problem}
	Does there exist a real number $\xi$ which is not a root of a rational number and with infinitely many integral powers belonging to a missing digit Cantor set $C_{b,W}$?
\end{problem}

\subsection{Exponents} \label{exponi}

We define several exponents to measure the quality of intrinsic 
and extrinsic rational approximation in a Cantor set. We restrict to the
one-dimensional setting in this paper. In the sequel for convenience we agree
on $\sup(\emptyset)=0$ and $1/0=+\infty$.

\begin{definition}
	Let $S\subseteq \mathbb{R}$ and $\xi\in \mathbb{R}\setminus \mathbb{Q}$.
	Define the ordinary exponent of rational approximation
	$\lambda(\xi)$ as the supremum of $\lambda$ such that
	\begin{equation}  \label{eq:achh}
	\vert \xi-\frac{p}{q}\vert \leq q^{-1}Q^{-\lambda}
	\end{equation}
	has a solution $p/q\in \mathbb{Q}$ with $1\leq q\leq Q$ 
	for arbitrarily large values of $Q$. Define the uniform
	exponent $\widehat{\lambda}(\xi)$ as the supremum of $\lambda$ for which
	\eqref{eq:achh}
	for every large $Q$.
	Define similarly the ordinary and uniform exponents of
	intrinsic and extrinsic approximation 
	denoted by $\lambda_{int}(\xi), \lambda_{ext}(\xi)$
	and 
	$\widehat\lambda_{int}(\xi),\widehat{\lambda}_{ext}(\xi)$ respectively, 
	via replacing $p/q\in \mathbb{Q}$ by
	$p/q\in \mathbb{Q}\cap S$ and $p/q\in \mathbb{Q}\setminus S$ in the definitions
	accordingly.
\end{definition}

The exponents $\lambda(\xi), \widehat{\lambda}(\xi)$ are classical
exponents of Diophantine approximation and are usually denoted
by $\lambda_{1}(\xi)$ and $\widehat{\lambda}_{1}(\xi)$, respectively.
The following properties can be readily checked. We leave the verification
to the reader.

\begin{proposition}
	Let $S\subseteq \mathbb{R}$. 
	For any irrational $\xi\in S$ we have
	\begin{equation} \label{eq:inequa}
	\lambda(\xi)\geq \widehat{\lambda}(\xi)=1, \quad \lambda_{int}(\xi)\geq \widehat{\lambda}_{int}(\xi)\geq 0, \quad \lambda_{ext}(\xi)\geq \widehat{\lambda}_{ext}(\xi)\geq 0.
	\end{equation}
	Further we have
	\[
	\lambda(\xi)=\max \{ \lambda_{int}(\xi), \lambda_{ext}(\xi)\},
	\]
	and
	\begin{equation} \label{eq:cons}
	\max \{ \widehat{\lambda}_{int}(\xi), \widehat{\lambda}_{ext}(\xi)\}\leq \widehat{\lambda}(\xi)= 1.
	\end{equation}
\end{proposition}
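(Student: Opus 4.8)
The proposition collects several routine facts, so I'll plan a proof that handles each inequality and identity in turn.

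\medskip

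The plan is to verify each assertion of the proposition separately, starting from the definitions of the exponents. First I would treat $\widehat{\lambda}(\xi)=1$: the inequality $\lambda(\xi)\geq\widehat{\lambda}(\xi)$ is immediate from the definitions, since any $\lambda$ admissible for \emph{every} large $Q$ is in particular admissible for arbitrarily large $Q$. The bound $\widehat{\lambda}(\xi)\geq 1$ is Dirichlet's theorem: for every $Q\geq 1$ there is $p/q$ with $1\leq q\leq Q$ and $\vert\xi-p/q\vert\leq 1/(qQ)$, which is exactly \eqref{eq:achh} with $\lambda=1$. For the reverse $\widehat{\lambda}(\xi)\leq 1$, I would argue by contradiction: if \eqref{eq:achh} held for every large $Q$ with some $\lambda>1$, then taking $p/q$ for a given $Q$ and then also for $Q'$ slightly smaller one produces two distinct rationals with $\vert p/q-p'/q'\vert$ forced below $1/(qq')$, contradicting $\vert pq'-p'q\vert\geq 1$ — this is the standard proof that the uniform exponent is at most $1$. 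The inequalities $\lambda_{int}(\xi)\geq\widehat{\lambda}_{int}(\xi)\geq 0$ and $\lambda_{ext}(\xi)\geq\widehat{\lambda}_{ext}(\xi)\geq 0$ are then trivial: the first comparison in each pair is again ``for every $Q$'' implies ``for arbitrarily large $Q$'', and non-negativity holds because with the convention $\sup(\emptyset)=0$ the supremum is over a set that is either empty or contains values $\geq 0$ (one may always take $\lambda=0$ in the intrinsic case when $\xi$ itself is a limit of rationals in $S$, but in any case the empty-set convention guarantees the bound).

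\medskip

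Next I would prove $\lambda(\xi)=\max\{\lambda_{int}(\xi),\lambda_{ext}(\xi)\}$. The inequality $\lambda(\xi)\geq\max\{\lambda_{int}(\xi),\lambda_{ext}(\xi)\}$ is clear because a solution $p/q\in\mathbb{Q}\cap S$ or $p/q\in\mathbb{Q}\setminus S$ to \eqref{eq:achh} is in particular a solution with $p/q\in\mathbb{Q}$. For the reverse, fix $\lambda<\lambda(\xi)$; then \eqref{eq:achh} has solutions $p/q\in\mathbb{Q}$ for arbitrarily large $Q$. Each such solution lies either in $S$ or outside $S$, so at least one of the two cases occurs for infinitely many of these $Q$-values (an infinite pigeonhole argument), giving $\lambda\leq\lambda_{int}(\xi)$ or $\lambda\leq\lambda_{ext}(\xi)$, hence $\lambda\leq\max\{\lambda_{int}(\xi),\lambda_{ext}(\xi)\}$; letting $\lambda\to\lambda(\xi)$ finishes it. Finally, \eqref{eq:cons}: the inequality $\max\{\widehat{\lambda}_{int}(\xi),\widehat{\lambda}_{ext}(\xi)\}\leq\widehat{\lambda}(\xi)$ follows because restricting the allowed rationals to a subclass can only shrink the set of admissible $\lambda$, and $\widehat{\lambda}(\xi)=1$ was already established.

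\medskip

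The only mildly delicate point is the pigeonhole step in the $\lambda(\xi)=\max\{\dots\}$ identity: one must be careful that for a fixed $\lambda<\lambda(\xi)$ the set of good $Q$ is genuinely infinite, and that splitting it into the ``inside $S$'' and ``outside $S$'' parts leaves at least one part infinite — this is the standard observation that an infinite set cannot be the union of two finite sets. Everything else is a direct unwinding of definitions together with Dirichlet's theorem and its companion upper bound on the uniform exponent, both of which are classical; the proposition itself explicitly invites the reader to carry out these verifications, so I would keep the write-up brief and merely indicate these steps.
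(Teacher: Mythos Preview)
Your proposal is correct and is exactly the kind of routine verification the paper has in mind: it explicitly states ``We leave the verification to the reader'' and only singles out the identity $\widehat{\lambda}(\xi)=1$ as Khintchine's theorem, which you recover via Dirichlet for the lower bound and the standard gap argument for the upper bound. The remaining inequalities and the pigeonhole step for $\lambda(\xi)=\max\{\lambda_{int}(\xi),\lambda_{ext}(\xi)\}$ are handled just as intended.
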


\begin{remark}
	The
	ordinary exponent of approximation $\lambda(\xi)$ can equivalently be defined as the
	supremum of $\lambda$ such that
	\[
	\vert \xi-\frac{p}{q}\vert \leq q^{-1-\lambda}
	\]
	has infinitely many solutions $p/q\in \mathbb{Q}$. However, for the intrinsic
	and extrinsic exponents, for certain sets $S$ the according definition leads to different
	exponents than those defined above. 
	Several inequalities in \eqref{eq:inequa} may turn out false
	with this altered definition.  
\end{remark}

The identity $\widehat{\lambda}(\xi)= 1$ is due to Khintchine~\cite{khint}.
In general there is no equality in the last inequality.
From~\cite{fishsimmons,fs} quoted above some inequalities concerning special types
of Cantor sets can be inferred. For example, by Theorem~\ref{fs2} 
in any one-dimensional Cantor set $C$ 
that satisfies OSC we have
$\lambda_{ext}(\xi)\geq 1$
for any $\xi\in C\setminus \mathbb{Q}$.
Our main result is inspired by Theorem~\ref{thm11} and admits
a similar proof.

\begin{theorem} \label{expon}
	Let $S\subseteq \mathbb{R}$ be any set and $\xi\in S$. We have
	\begin{equation} \label{eq:e1}
	\widehat{\lambda}_{ext}(\xi)\leq \frac{1}{\lambda_{int}(\xi)}, \qquad
	\widehat{\lambda}_{int}(\xi)\leq \frac{1}{\lambda_{ext}(\xi)},
	\end{equation}
	If $S=C\subseteq \mathbb{R}$ is any one-dimensional Cantor set 
	that satisfies OSC 
	and $\xi\in C\setminus \mathbb{Q}$, then
	\begin{equation} \label{eq:foet}
	\widehat{\lambda}_{ext}(\xi)= \min\left\{ \frac{1}{\lambda_{int}(\xi)}, 1\right\}.
	\end{equation}
\end{theorem}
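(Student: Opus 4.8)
The plan is to establish the two inequalities in \eqref{eq:e1} first, since they hold for arbitrary $S$, and then to upgrade the first one to the equality \eqref{eq:foet} using the extrinsic approximation available in Cantor sets with OSC (Theorem~\ref{fs2}). For the inequalities in \eqref{eq:e1} the idea is a ``mismatch'' argument between intrinsic and extrinsic approximants. Suppose $p/q\in\mathbb{Q}\cap S$ is a good intrinsic approximant, so $|\xi-p/q|$ is small, say $\leq q^{-1}q^{-\lambda}$ with $\lambda$ close to $\lambda_{int}(\xi)$, and suppose $a/Q\in\mathbb{Q}\setminus S$ with $1\leq Q\leq \widetilde{Q}$ is a candidate uniform extrinsic approximant. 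Since $p/q\in S$ but $a/Q\notin S$, the two rationals are distinct, so $|p/q-a/Q|\geq 1/(qQ)$. On the other hand, if the uniform extrinsic inequality $|\xi-a/Q|\leq Q^{-1}\widetilde{Q}^{-\mu}$ held for \emph{all} such $a/Q$ with $Q\leq\widetilde Q$, then by the triangle inequality
\[
\frac{1}{qQ}\leq \left|\frac{p}{q}-\frac{a}{Q}\right| \leq \left|\xi-\frac{p}{q}\right| + \left|\xi-\frac{a}{Q}\right| \leq \frac{1}{q^{1+\lambda}} + \frac{1}{Q\,\widetilde{Q}^{\mu}}.
\]
Choosing $\widetilde{Q}\asymp q^{\lambda}$ (more precisely, the largest $Q$-value one is forced to use in the uniform statement should be comparable to the denominator $q$ of the intrinsic approximant, after possibly applying the intrinsic approximant at parameter $Q=q$), the first term on the right becomes negligible against $1/(qQ)$, and one is left with $1/(qQ)\ll Q^{-1}\widetilde Q^{-\mu}$, i.e. $\widetilde Q^{\mu}\ll q\asymp \widetilde Q^{1/\lambda}$, forcing $\mu\leq 1/\lambda$. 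Letting $\lambda\to\lambda_{int}(\xi)$ gives $\widehat{\lambda}_{ext}(\xi)\leq 1/\lambda_{int}(\xi)$; the second inequality of \eqref{eq:e1} follows by the symmetric argument, swapping the roles of $\mathbb{Q}\cap S$ and $\mathbb{Q}\setminus S$ and of the ordinary/uniform exponents.

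For the equality \eqref{eq:foet}, the inequality ``$\leq$'' is immediate from \eqref{eq:e1} together with the general bound $\widehat{\lambda}_{ext}(\xi)\leq\widehat{\lambda}(\xi)=1$ from \eqref{eq:cons}. So the work is the reverse inequality $\widehat{\lambda}_{ext}(\xi)\geq\min\{1/\lambda_{int}(\xi),1\}$. The plan is to exhibit, for every large $Q$, an extrinsic rational $a/Q'$ with $Q'\leq Q$ and $|\xi-a/Q'|\leq (Q')^{-1}Q^{-\mu}$ for any $\mu<\min\{1/\lambda_{int}(\xi),1\}$. Here I would use Theorem~\ref{fs2} (valid since $C$ is a one-dimensional Cantor set with OSC; the no-line-segment hypothesis is automatic in dimension one for a genuine Cantor set, and \eqref{eq:hypot} is vacuous/automatic for $d=1$ affine-type contractions — if one wants full generality one can instead invoke that extrinsic approximants of order $q^{-1-1/d}=q^{-2}$ exist), which supplies infinitely many $a/s\in\mathbb{Q}\setminus C$ with $|\xi-a/s|\leq c\,s^{-2}$. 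Given $Q$, pick such an $s$ with $s$ as large as possible subject to $s\leq Q$; the point is to show $s$ cannot be too small relative to $Q$. If $s$ were much smaller than $Q$, then between consecutive ``extrinsic denominators'' there would be a long gap, and in that gap $\xi$ would be approximated \emph{only} by rationals inside $C$ to within $\sim s^{-2}$ — but an intrinsic approximant $p/q$ with $|\xi-p/q|\ll s^{-2}$ and $q\le$ that approximant's denominator would, via $\lambda_{int}(\xi)<\infty$ in the relevant range, have denominator $q\gg s^{2/\lambda_{int}(\xi)-\epsilon}$, which can be made $\geq s$ or more, contradicting that $a/s$ was the \emph{largest} extrinsic denominator below $Q$; chasing the inequalities yields $s\geq Q^{\mu}$ for the claimed $\mu$, and then $|\xi-a/s|\leq c s^{-2}\leq c s^{-1}Q^{-\mu}$, which is the desired uniform extrinsic estimate up to the harmless constant $c$ (absorbed by taking $\mu$ strictly smaller).

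The main obstacle I anticipate is the second part: making rigorous the dichotomy ``either an extrinsic approximant of denominator close to $Q$ exists, or else $\xi$ is too well approximated intrinsically, contradicting $\lambda_{int}(\xi)<\infty$.'' One must be careful about the case $\lambda_{int}(\xi)=\infty$ (intrinsic Liouville vectors — these exist in $C$ by the discussion preceding Theorem~\ref{yz}), where $\min\{1/\lambda_{int}(\xi),1\}=0$ and \eqref{eq:foet} just asserts $\widehat{\lambda}_{ext}(\xi)\ge 0$, which is trivial — so the delicate range is $1\le\lambda_{int}(\xi)<\infty$, and there one needs the quantitative comparison of denominators of best intrinsic vs. best extrinsic approximants done uniformly in $Q$. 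A secondary technical point is bookkeeping the passage between the ``$q^{-1}Q^{-\lambda}$ with $q\le Q$'' formulation of the exponents and the ``$q^{-1-\lambda}$ infinitely often'' formulation; for the uniform extrinsic exponent one must genuinely work with the two-parameter version, and the Remark in the excerpt warns that these are not interchangeable for restricted approximation, so I would keep all estimates in the two-parameter form throughout.
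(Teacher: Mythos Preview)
Your argument for the inequalities \eqref{eq:e1} is essentially correct and in fact slightly more elementary than the paper's: you use only the gap principle $|p/q-a/Q'|\geq 1/(qQ')$ and the triangle inequality, whereas the paper invokes the continued-fraction estimate of Lemma~\ref{lele} (any rational $r/s\neq u_l/v_l$ with $s<v_{l+1}/2$ satisfies $|s\xi-r|\geq 1/(2v_l)$). Both routes lead to the same bound, and your version avoids singling out convergents. A small bookkeeping point: you should take $\widetilde Q$ to be, say, half of $q\,Q^{\lambda-1}$ (or simply $Q^{\lambda}/2$ after observing that one may assume $q=Q$ for the approximant realizing $\lambda_{int}$), so that the intrinsic term is strictly less than $\tfrac12\cdot 1/(qQ')$ rather than merely ``negligible''; otherwise the subtraction step is not clean.

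The genuine gap is in your approach to \eqref{eq:foet}. Theorem~\ref{fs2} supplies \emph{some} sequence of extrinsic approximants with $|\xi-a/s|\ll s^{-2}$, but gives no control whatsoever on the spacing of the denominators $s$. Your proposed dichotomy (``if $s_k$ is much smaller than $Q$, then in the gap $[s_k,s_{k+1}]$ the good approximants are all intrinsic, forcing $\lambda_{int}(\xi)$ large'') does not follow: there may well be other extrinsic rationals with denominators in that range, just not coming from the Theorem~\ref{fs2} sequence; and conversely, even if all convergents $u_m/v_m$ with $v_m\in[s_k,Q]$ lay in $C$, a \emph{single} stretch of intrinsic convergents does not raise the supremum defining $\lambda_{int}(\xi)$, which requires infinitely many scales. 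So one cannot bootstrap from Theorem~\ref{fs2} alone.

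The paper proceeds differently. Given $Q$, take the convergent $u_l/v_l$ with $v_l\leq Q<v_{l+1}$. If $u_l/v_l\notin C$ one is done. If $u_l/v_l\in C$, then---and this is the key step you are missing---the finiteness of $\lambda_{int}(\xi)=a$ forces $v_{l+1}\ll v_l^{a+\epsilon}$ for all large such $l$ (a single bad $l$ is harmless, but infinitely many would push $\lambda_{int}$ above $a$). Hence $v_l\gg Q^{1/(a+\epsilon)}$, and both $|v_l\xi-u_l|$ and $|v_{l-1}\xi-u_{l-1}|$ are $\ll Q^{-1/(a+\epsilon)}$. Now form the mediants $(u_l+nu_{l-1})/(v_l+nv_{l-1})$ for $1\leq n\leq N$; these are an (almost-)arithmetic progression of rationals, so by Lemma~\ref{fslemma} not all lie in $C$. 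The one outside $C$ is the required uniform extrinsic approximant at scale $Q$. The case $\lambda_{int}(\xi)<1$ is handled separately: the Dirichlet approximant $u_l/v_l$ itself must then be extrinsic for large $l$, giving $\widehat\lambda_{ext}(\xi)=1$ directly.
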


Define the spectrum of an exponent as the set of values taken 
when inserting any irrational real $\xi$. 
For missing digit Cantor sets $C_{b,W}$, Bugeaud~\cite{bug}
provided a construction of $\xi$ with $\lambda(\xi)=\lambda_{int}(\xi)=\tau$
for any given $\tau\geq 1$
in missing digit Cantor sets $C_{b,W}$. See also~\cite{leve},\cite{js}.
Therefore the spectrum of $\lambda$ in $C_{b,W}$ equals 
$[1,\infty]$, and the spectrum of $\lambda_{int}$ contains
this interval. Further in the case that $b$ is prime and $W$ contains
$0$ and $b-1$ but does not contain
two successive digits, the metrical result~\cite[Theorem~3.10]{fs}
implies that for $r\geq 0$ the set of $\xi\in C_{b,W}$ 
with $\lambda_{int}(\xi)=r$ (or $\lambda_{int}(\xi)\geq r$) has 
Hausdorff
dimension $\Delta/(r+1)$, where $\Delta=\log \vert W\vert/\log b$ is the Hausdorff dimension
of $C_{b,W}$. Thus 
from Theorem~\ref{expon} we deduce at once the following results. 

\begin{corollary} \label{cc}
	Consider the missing digit Cantor set $C_{b,W}$ of Hausdorff dimension $\Delta=\log \vert W\vert/\log b$.
	The spectrum of $\widehat{\lambda}_{ext}$ with respect to $C_{b,W}$ equals $[0,1]$. Assume $b$ is  prime and $W$ contains $\{0,b-1\}$ but does not contain
	two successive digits.
	Then for $\tau\in[0,1)$ the sets
	$\{\xi\in C_{b,W}:\widehat{\lambda}_{ext}(\xi)=\tau\}$ 
	and $\{\xi\in C_{b,W}:\widehat{\lambda}_{ext}(\xi)\leq \tau\}$
	have Hausdorff dimension $\Delta\cdot \frac{\tau}{\tau+1}$, whereas for $\tau=1$ the Hausdorff dimension equals $\Delta$.
	In particular the set $\{ \xi\in C_{b,W}: \widehat{\lambda}_{ext}(\xi)<1\}$ has 
	Hausdorff dimension $\Delta/2$.
\end{corollary}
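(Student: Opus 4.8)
The plan is to reduce the whole corollary to formula \eqref{eq:foet} of Theorem~\ref{expon} together with the metrical input already quoted from \cite{fs}. Since every missing digit Cantor set $C_{b,W}$ with $W\subsetneq\{0,1,\ldots,b-1\}$ satisfies OSC, \eqref{eq:foet} applies to each irrational $\xi\in C_{b,W}$ and gives $\widehat{\lambda}_{ext}(\xi)=h(\lambda_{int}(\xi))$, where $h(t):=\min\{1/t,1\}$. Observe that $h(t)=1$ for $t\in[0,1]$, $h(t)=1/t\in(0,1)$ for $t\in(1,\infty)$, and $h(\infty)=0$, so $h$ maps $[0,\infty]$ onto $[0,1]$; thus the $\widehat{\lambda}_{ext}$-level sets in $C_{b,W}$ are precisely the $h$-preimages of the $\lambda_{int}$-level sets, and the corollary becomes a statement about $\lambda_{int}$. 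Throughout, all level sets are understood inside $C_{b,W}\setminus\mathbb{Q}$.

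First I would settle the spectrum claim, which needs no hypothesis on $b,W$ beyond $W\subsetneq\{0,\ldots,b-1\}$. The inclusion of the spectrum in $[0,1]$ is immediate, as $\widehat{\lambda}_{ext}(\xi)\ge0$ by \eqref{eq:inequa} and $\widehat{\lambda}_{ext}(\xi)\le\widehat{\lambda}(\xi)=1$ by \eqref{eq:cons}. For surjectivity onto $[0,1]$ I would realize each target value through $\lambda_{int}$ and $h$: a value $\tau\in(0,1)$ is attained by any irrational $\xi\in C_{b,W}$ with $\lambda_{int}(\xi)=1/\tau\in(1,\infty)$, which Bugeaud's construction \cite{bug} supplies; the value $\tau=1$ is attained by the $\xi$ of \cite{bug} with $\lambda_{int}(\xi)=1$; and $\tau=0$ calls for an intrinsic Liouville number in $C_{b,W}$, i.e. $\lambda_{int}(\xi)=\infty$, whose existence follows from the last assertion of Theorem~\ref{yz} applied with $\Psi$ decaying faster than every power of $1/q$ (recall $C_{b,W}$ satisfies the hypotheses of Theorem~\ref{yz}).

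For the dimension formulas I would invoke \cite[Theorem~3.10]{fs} as quoted, now under the extra hypotheses that $b$ is prime and $W$ contains $\{0,b-1\}$ but no two successive digits. For $\tau\in(0,1)$, using $\tau<1$ to discard the clipping of $h$ at $1$, one has
\[
\{\widehat{\lambda}_{ext}=\tau\}=\{\lambda_{int}=1/\tau\},\qquad
\{\widehat{\lambda}_{ext}\le\tau\}=\{\lambda_{int}\ge1/\tau\},
\]
both of Hausdorff dimension $\Delta/(1/\tau+1)=\Delta\tau/(\tau+1)$ by \cite[Theorem~3.10]{fs}. For $\tau=0$ the set $\{\widehat{\lambda}_{ext}=0\}=\{\lambda_{int}=\infty\}$ lies in $\{\lambda_{int}\ge r\}$ for every $r\ge0$, hence has dimension at most $\inf_{r\ge0}\Delta/(r+1)=0=\Delta\cdot0/(0+1)$, and it is nonempty by the previous paragraph. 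For $\tau=1$, the set $\{\widehat{\lambda}_{ext}\le1\}$ is all of $C_{b,W}\setminus\mathbb{Q}$ and thus has dimension $\Delta$, while $\{\widehat{\lambda}_{ext}=1\}=\{\lambda_{int}\le1\}$ contains $\{\lambda_{int}=0\}$, which already has full dimension $\Delta$ by the case $r=0$ of \cite[Theorem~3.10]{fs}. Finally the last assertion follows from $\{\widehat{\lambda}_{ext}<1\}=\{\lambda_{int}>1\}=\bigcup_{n\ge1}\{\lambda_{int}\ge1+1/n\}$, countable stability of Hausdorff dimension, and $\sup_{n\ge1}\Delta/(2+1/n)=\Delta/2$.

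Everything past \eqref{eq:foet} is bookkeeping. The closest thing to an obstacle is matching the boundary values $\tau\in\{0,1\}$ of the formula $\Delta\tau/(\tau+1)$ with the behaviour of $h$ at $t=0,\infty$ and with the exact-value versus lower-bound forms of \cite[Theorem~3.10]{fs}, and using Theorem~\ref{yz} to certify that the dimension-zero set $\{\widehat{\lambda}_{ext}=0\}$ is nonempty, so that $0$ genuinely belongs to the spectrum; but all of this is routine given the ingredients above.
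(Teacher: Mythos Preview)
Your proposal is correct and follows essentially the same route as the paper, which simply states that the corollary is deduced ``at once'' from Theorem~\ref{expon} (formula \eqref{eq:foet}) together with Bugeaud's construction and the metric result \cite[Theorem~3.10]{fs}. You have made explicit the boundary-case bookkeeping (the values $\tau=0,1$ and the countable-stability argument for $\{\widehat{\lambda}_{ext}<1\}$) that the paper leaves to the reader, and your use of Theorem~\ref{yz} to produce an intrinsic Liouville number (and hence realize $\tau=0$) is a legitimate alternative to invoking the full spectrum statement $[1,\infty]$ for $\lambda_{int}$ asserted in the text.
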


We pose the problem to decide whether the claims of Corollary~\ref{cc} generalize
to one-dimensional Cantor sets that satisfy OSC. 
Finally we point out that numbers with the property $\widehat{\lambda}_{int}(\xi)=1$
do exist in any missing digit Cantor set $C_{b,W}$. 
Indeed, it is easy to verify this identity
for the numbers constructed in~\cite{royschl} which have almost all
convergents in $C_{b,W}$. The identity can further be checked 
for Liouville type numbers $\xi=\sum_{n\geq 0} wb^{-n!}$, $0\neq w\in W$, 
compare with~\cite[Lemma~3.10]{js}.
Thus, in view of the metric result rephrased above, we believe that the spectrum of $\widehat{\lambda}_{int}$ equals $[0,1]$ as well. However, we are unable to show this at present.

\subsection{Base expansions} \label{rel}

For missing digit Cantor sets
$C=C_{b,W}$, Theorem~\ref{pbasic} and
Theorem~\ref{bfr} have implications on the base $b$ digit patterns
of rational numbers that can be stated
without the framework of Cantor sets. Related considerations concerning the period lengths of
base $b$ representations of rational numbers in $C_{b,W}$ have been addressed 
in~\cite[Section~5]{fs}. However, the 
results there are almost all of conditional nature. We start with a consequence
of Theorem~\ref{pbasic}.  

\begin{theorem} \label{cons}
	Let $W\subseteq \{0,1,\ldots,b-1\}$ and $\Delta=\log \vert W\vert/\log b$. 
	Let $c_{0},\ldots,c_{N-1}\in W$ and assume that the infinite word
	$(c_{0}c_{1}\ldots c_{N-1})^{\infty}$ has period length $N$. Then we have
	\[
	\rm{gcd}(c_{0}b^{N-1}+c_{1}b^{N-2}+\cdots+c_{N-1}, b^{N}-1)\ll \frac{b^{N}}{N^{1/\Delta}}.
	\]
\end{theorem}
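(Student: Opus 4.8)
The plan is to realize the gcd in question as attached to a rational point of the missing digit Cantor set $C=C_{b,W}$, and then to apply Theorem~\ref{pbasic}. First I would introduce the real number
\[
\xi:=\sum_{i\ge 0}\frac{c_{i\bmod N}}{b^{i+1}}=\frac{A}{b^{N}-1},\qquad\qquad A:=c_{0}b^{N-1}+c_{1}b^{N-2}+\cdots+c_{N-1},
\]
whose base $b$ expansion is the purely periodic word $(c_{0}c_{1}\ldots c_{N-1})^{\infty}$. Since all its digits lie in $W$, we have $\xi\in C_{b,W}$. Writing $\xi=p/q$ in lowest terms and using $q\mid b^{N}-1$, so that $\gcd(q,b)=1$ and
\[
\gcd(A,b^{N}-1)=\frac{b^{N}-1}{q}\le\frac{b^{N}}{q},
\]
the theorem reduces to the lower bound $q\gg N^{1/\Delta}$, with an implied constant depending only on $b$ and $W$.

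Next I would match up the relevant period lengths. The case $N=1$ is trivial, since then the left-hand side is at most $b-1<b=b^{N}/N^{1/\Delta}$; so assume $N\ge2$, whence $q>1$, and, as $\gcd(q,b)=1$, the number $\xi=p/q$ is not a $b$-adic rational and therefore has a unique base $b$ expansion. By the classical description of repeating base $b$ expansions, a reduced fraction $p/q$ with $\gcd(q,b)=1$ has purely periodic base $b$ expansion of minimal period equal to the multiplicative order $\mathrm{ord}_{q}(b)$; hence $\mathrm{ord}_{q}(b)=N$. Moreover $C_{b,W}$ is the attractor of the monic affine rational-preserving IFS $f_{w}(y)=y/b+w/b$ with $w\in W$, and under the bijection between $W$ and the branch indices an address of a point of $C_{b,W}$ corresponds digitwise to a base $b$ expansion of that point; consequently $\xi=p/q$ has a unique address, which is purely periodic of minimal period $N$.

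Now I would invoke Theorem~\ref{pbasic}. The set $C_{b,W}$ is a one-dimensional monic, hence unimodular, affine rational-preserving Cantor set with $J=|W|$, $\tau=1/b$ and $d=1$, so that $D=-\log J/\log\tau=\log|W|/\log b=\Delta\in(0,1)$. Theorem~\ref{pbasic} therefore furnishes an ultimately periodic address of $\xi=p/q$ whose total length (preperiod plus period) is $\ll_{C}\min\{q^{D},q^{d}\}=q^{\Delta}$. Since $\xi$ has only this one address, its minimal period $N$ is bounded by that length, giving $N\ll_{C}q^{\Delta}$ and hence $q\gg_{C}N^{1/\Delta}$. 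Combined with the reduction of the first paragraph, this yields
\[
\gcd\bigl(c_{0}b^{N-1}+c_{1}b^{N-2}+\cdots+c_{N-1},\,b^{N}-1\bigr)\le\frac{b^{N}}{q}\ll\frac{b^{N}}{N^{1/\Delta}},
\]
as claimed.

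The step I expect to be most delicate is the bookkeeping linking the three notions of ``period'' that occur: the prescribed minimal period $N$ of the word, the minimal period $\mathrm{ord}_{q}(b)$ of the reduced fraction $p/q$, and the a priori possibly non-unique IFS address to which Theorem~\ref{pbasic} refers. One has to verify that, for $W\subsetneq\{0,1,\ldots,b-1\}$, addresses of points of $C_{b,W}$ are unique away from $b$-adic rationals and that such exceptional points force $q=1$ (so they are already covered by the trivial case $N=1$); then the clean chain $N=\mathrm{ord}_{q}(b)\le(\text{length from Theorem~\ref{pbasic}})\ll_{C}q^{\Delta}$ applies throughout.
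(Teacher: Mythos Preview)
Your proposal is correct and follows essentially the same route as the paper: write $\xi=A/(b^{N}-1)=p/q$ in lowest terms, observe that its base $b$ expansion (equivalently, its IFS address) has period $N$, and invoke Theorem~\ref{pbasic} to obtain $N\ll q^{\Delta}$, hence $q\gg N^{1/\Delta}$ and $\gcd(A,b^{N}-1)=(b^{N}-1)/q\ll b^{N}/N^{1/\Delta}$. Your additional bookkeeping on address uniqueness (handling $b$-adic rationals separately and noting that $\gcd(q,b)=1$ forces $\xi$ to have a unique expansion when $q>1$) makes rigorous a point the paper's proof passes over in silence, but the argument is otherwise identical.
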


\begin{proof}
	Write $p/q$ for the fraction
	$(c_{0}b^{N-1}+c_{1}b^{N-2}+\cdots+c_{N-1})/(b^{N}-1)$ in lowest terms.
	Then the rational number $p/q$ has base $b$ expansion $(0.\overline{c_{0}c_{1}\ldots c_{N-1}})_{b}$, and by assumption 
	it has period length
	$N$, which by Theorem~\ref{pbasic} is $\ll q^{\Delta}$. Thus
	$q\gg N^{1/\Delta}$, which implies that the common factor is of size
	$\ll b^{N}/N^{1/\Delta}$. 
	\end{proof}

The natural assumption that the period length is not shorter 
than expected is necessary,
since if we let $c_{0}=c_{1}=\cdots=c_{N-1}\in W$ then
the gcd in question is a multiple of $(b^{N}-1)/(b-1)$ and thus $\gg b^{N}$.
Now we turn to implications of Theorem~\ref{bfr}.

\begin{theorem} \label{pthm}
Let $b\geq 3$ and $W\subseteq \{0,1,\ldots,b-1\}$. Further
let $\Delta=\log \vert W\vert/\log b$. 
Let
\begin{equation} \label{eq:ain}
\xi= (0.c_{0}c_{1}\cdots c_{k}\overline{c_{k+1}c_{k+2}\cdots c_{N-1}})_{b},
\end{equation}
with $N>k\geq 0$ and $c_{i}\in \{0,1,\ldots,b-1\}$ be a rational number in $(0,1)$
expanded in base $b$. Then $\xi=p/q$ with 
\begin{equation} \label{eq:asinas}
p=\sum_{i=0}^{N-1} c_{i}b^{N-1-i} -\sum_{j=0}^{k-1} c_{j}b^{k-1-j},\qquad
q=b^{N}-b^{k}. 
\end{equation}
Assume there exists an index $i$ with $c_{i}\notin W$ and
let $\phi(\xi)\in\{0,1,2,\ldots,N-1\}$ be the smallest index
with this property.
Further
let $p_{0}/q_{0}$ be the fraction $p/q$ in lowest terms.
Then
$\phi(\xi)\ll q_{0}^{\Delta}$.
Equivalently, if for $\xi$ in \eqref{eq:ain} the first
$s$ base $b$ digits $c_{0},c_{1},\ldots,c_{s}$ 
lie in $W$, but not all $c_{i}$
belong to $W$, then its reduced denominator $q_{0}$ is $\gg s^{1/\Delta}$.
\end{theorem}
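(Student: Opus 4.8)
The plan is to deduce Theorem~\ref{pthm} directly from Theorem~\ref{bfr}, in fact from the intermediate inequality \eqref{eq:null} for missing digit Cantor sets $C=C_{b,W}$. First I would dispose of the degenerate cases: if $|W|=1$ then $C_{b,W}$ is a single point and the hypothesis that $\xi$ has all its first $s$ digits in $W$ but not all digits in $W$ cannot be sustained past $s=0$ in any meaningful way, so we may assume $|W|\geq 2$, hence $0<\Delta<1$; also if $\xi\in C_{b,W}$ there is no index $i$ with $c_i\notin W$, contrary to assumption, so $\xi=p_0/q_0\in\mathbb{Q}\setminus C_{b,W}$ and \eqref{eq:null} applies.

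Next, the heart of the argument is a lower bound on the distance $d(C,\xi)$ coming from the \emph{smallest} bad digit. If $c_0,c_1,\ldots,c_{\phi(\xi)-1}\in W$ but $c_{\phi(\xi)}\notin W$, I would produce an element $\eta\in C_{b,W}$ agreeing with $\xi$ in the first $\phi(\xi)$ base-$b$ digits: take $\eta=\sum_{i<\phi(\xi)} c_i b^{-i-1}+\sum_{i\geq\phi(\xi)} w b^{-i-1}$ for any fixed $w\in W$ (a genuine element of $C_{b,W}$ since all its digits lie in $W$). Then $|\xi-\eta|\leq b^{-\phi(\xi)}$ by comparing base-$b$ expansions digit by digit from position $\phi(\xi)+1$ onward, so
\[
d(C,\xi)\leq |\xi-\eta|\leq b^{-\phi(\xi)}.
\]
Combining with the lower bound \eqref{eq:null}, namely $d(C,p_0/q_0)>b^{-(2b)^{\Delta}q_0^{\Delta}}/(2q_0)$, we get
\[
b^{-\phi(\xi)} \;\geq\; d(C,\xi) \;>\; \frac{b^{-(2b)^{\Delta}q_0^{\Delta}}}{2q_0}.
\]
Taking $\log_b$ of both sides yields $\phi(\xi) < (2b)^{\Delta} q_0^{\Delta} + \log_b(2q_0)$, and since $q_0^{\Delta}$ dominates $\log q_0$ (because $\Delta>0$ is a fixed positive constant depending only on $b,W$), this gives $\phi(\xi)\ll_{C} q_0^{\Delta}$, which is the first assertion. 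The ``equivalently'' reformulation is immediate: if the first $s$ digits $c_0,\ldots,c_s$ lie in $W$ then $\phi(\xi)\geq s+1>s$, so $s < \phi(\xi)\ll q_0^{\Delta}$, hence $q_0\gg s^{1/\Delta}$.

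The only genuine subtlety — and the step I would be most careful about — is the absorption of the $\log_b(2q_0)$ term and the implicit-constant bookkeeping: one must make sure the bound is stated for all admissible $(c_i)$ uniformly, i.e.\ that the implied constant depends only on $b$ and $W$ (equivalently on $C$) and not on $N$, $k$, or the particular digit string, which is clear since everything is phrased through $q_0$ and $\Delta$ alone. A minor point worth a sentence is that the representation \eqref{eq:asinas} of $\xi$ as $p/q$ with $q=b^N-b^k$ need not be in lowest terms, but this is harmless: we only ever use the reduced fraction $p_0/q_0$, which is what \eqref{eq:null} is about, and $q_0 \mid q$. I expect no obstacle beyond this routine housekeeping, since the substantive content is entirely carried by Theorem~\ref{bfr}.
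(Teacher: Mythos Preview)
Your proposal is correct and follows essentially the same route as the paper: exhibit an element of $C_{b,W}$ within distance $\asymp b^{-\phi(\xi)}$ of $\xi$, invoke the lower bound \eqref{eq:null} from Theorem~\ref{bfr} for $d(C,\xi)$, and compare.

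The one place where the paper is more careful is your sentence ``if $\xi\in C_{b,W}$ there is no index $i$ with $c_i\notin W$''. This is not literally true: a rational number may admit two base-$b$ expansions (one terminating in $\overline{0}$, the other in $\overline{b-1}$), and it can happen that the particular representation \eqref{eq:ain} contains a digit outside $W$ while the other representation does not, so that $\xi\in C_{b,W}$ after all (e.g.\ $b=3$, $W=\{0,2\}$, $\xi=1/3=(0.1\overline{0})_3=(0.0\overline{2})_3$). The paper closes this loophole by a case split: if $m=\phi(\xi)\leq k+1$ it uses the elementary preperiod estimate $m\leq k+1\ll \log q_0\ll q_0^{\Delta}$; if $m>k+1$ it appeals to Proposition~\ref{ppp}, which is precisely the statement that once the bad digit lies strictly inside the period (and the earlier digits including $c_{k+1}$ are in $W$), the non-uniqueness ambiguity cannot arise and $\xi\notin C_{b,W}$ is guaranteed. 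With that small repair your argument is complete and matches the paper's.
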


The relation to Theorem~\ref{bfr}
arises from the fact that the rational numbers in $C_{b,W}$ are
precisely those where $c_{i}\in W$ in \eqref{eq:ain}.
For a generic rational $\xi$ in \eqref{eq:ain}
we expect that $q_{0}$ is of exponential
order, that is $\log q_{0}\gg N\geq s$, reasonably stronger than in our claim.
However, elementary methods only yield a bound  of order
$q_{0}\gg s$ that is valid for all $\xi$ in \eqref{eq:ain}, as we will
see from Proposition~\ref{pr} below. The bound 
$s^{1/\Delta}$ of our Theorem~\ref{pthm} is stronger since $\Delta<1$. 
The special case of rationals where $b$ is primitive
root of the denominator illustrates the improvement well. We denote by $\varphi$
the Euler totient function. For $A,B$ coprime integers, as usual
$ord_{A} \bmod B$ denotes the smallest positive integer $m$ with $A^{m}\equiv 1 \bmod B$.
The next proposition comprises results on the $\varphi$-function and period
lenghts of rational numbers in a base. 

\begin{proposition} \label{pr}
	We have $\varphi(q)\gg q/\log\log q$. 
	Let $A,B,C$ be integers with $(A,C)=1$. Write $C=C_{1}C_{2}$ with 
	$(C_{2},B)=1$ and $C_{1}$ consisting only of factors of $B$.
	Then $\xi=A/C$ written in base $B$ has preperiod of length $P(\xi)$
	equal to the smallest
	integer $v$ with $C_{1}\vert B^{v}$, followed by a period of length 
	$L(\xi)=ord_{B} \bmod C_{2}$. 
	In particular if $(A,B)=1$, then $A/C$ has purely periodic
   base $B$ expansion of period length $ord_{B} \bmod C$.
   Anyway, the total period length $P(\xi)+L(\xi)$ of $\xi$ is of order $\ll C$.
\end{proposition}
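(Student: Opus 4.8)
The plan is to prove the four assertions in turn by elementary number theory. For the bound $\varphi(q)\gg q/\log\log q$, I would start from $\varphi(q)/q=\prod_{p\mid q}(1-1/p)$: if $q$ has $k$ distinct prime factors then, since its $i$-th smallest prime factor is at least the $i$-th prime $p_{i}$, we have $q\ge p_{1}p_{2}\cdots p_{k}$ and $\prod_{p\mid q}(1-1/p)\ge\prod_{i=1}^{k}(1-1/p_{i})$; by the prime number theorem $\log(p_{1}\cdots p_{k})\sim p_{k}$, so $p_{k}\le(1+o(1))\log q$, and Mertens' third theorem $\prod_{p\le y}(1-1/p)\sim e^{-\gamma}/\log y$ then gives $\varphi(q)/q\gg 1/\log\log q$. (This is Landau's theorem and could alternatively be cited.)

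For the period structure, factor $C=C_{1}C_{2}$ as in the statement; note $(C_{1},C_{2})=1$, since every prime factor of $C_{1}$ divides $B$ while $C_{2}$ is coprime to $B$. The base $B$ digits of $A/C$ from position $v+1$ onward form the expansion of the fractional part of $B^{v}A/C$, and this tail is purely periodic precisely when the reduced denominator of $B^{v}A/C$ is coprime to $B$. Because $(A,C)=1$ and $(B,C_{2})=1$, that reduced denominator equals $C_{1}C_{2}/\gcd(B^{v},C_{1})$, which is coprime to $B$ if and only if $C_{1}\mid B^{v}$; hence the preperiod length $P(\xi)$ is the least $v$ with $C_{1}\mid B^{v}$. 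Writing $B^{P(\xi)}=C_{1}M$, we get $B^{P(\xi)}A/C=MA/C_{2}$, and since every prime factor of $M$ divides $B$ and $(A,C_{2})=1$ we have $(MA,C_{2})=1$; thus the fractional part of $B^{P(\xi)}A/C$ equals $r/C_{2}$ in lowest terms (or $0$ when $C_{2}=1$, a terminating expansion). For $r/C_{2}$ in lowest terms with $(B,C_{2})=1$, shifting by $L$ digits returns the same number exactly when $C_{2}\mid r(B^{L}-1)$, equivalently (as $(r,C_{2})=1$) when $C_{2}\mid B^{L}-1$, equivalently when $ord_{B}\bmod C_{2}$ divides $L$; so $L(\xi)=ord_{B}\bmod C_{2}$. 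The specialization $C_{1}=1$, $C_{2}=C$ gives the purely periodic case when $(A,B)=1$.

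For the final estimate, $L(\xi)=ord_{B}\bmod C_{2}\le\varphi(C_{2})<C_{2}\le C$, while $P(\xi)=\max_{p\mid C_{1}}\lceil v_{p}(C_{1})/v_{p}(B)\rceil\le\max_{p\mid C_{1}}v_{p}(C_{1})\le\log_{2}C_{1}\le\log_{2}C$, whence $P(\xi)+L(\xi)\ll C$. I expect no genuine obstacle, as every step is a textbook computation; the single point deserving care is the verification that, after dividing out $B^{P(\xi)}$, the residual fraction $r/C_{2}$ is in lowest terms, so that its period is exactly $ord_{B}\bmod C_{2}$ and not a proper divisor thereof — this is precisely where the hypothesis $(A,C)=1$ enters, and I would spell it out explicitly.
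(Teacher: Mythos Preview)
Your argument is correct and is precisely the elementary expansion the paper alludes to: the paper actually omits the proof, citing Hardy--Wright \cite[Theorem~328]{hw} for the bound $\varphi(q)\gg q/\log\log q$, declaring the period identities ``well-known'', and recording only the one-line estimates $L(\xi)\le\varphi(C_{2})\le C_{2}-1\le C-1$ and $P(\xi)\ll\log C_{1}\le\log C$ for the final claim --- exactly the bounds you derive. One small slip (inherited from what appears to be a typo in the statement itself): the purely periodic specialisation is $C_{1}=1$, i.e.\ $(C,B)=1$, not $(A,B)=1$; your sentence ``the specialization $C_{1}=1$, $C_{2}=C$ gives the purely periodic case when $(A,B)=1$'' conflates the two conditions, and only the former is what you actually use.
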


We omit a proof as the claims are well-known. The first claim can be found in the book of Hardy and Wright~\cite[Theorem~328]{hw}. The identities for periods can be checked in an elementary way.
The bounds on period lengths follow from $L(\xi)\leq ord_{B} \bmod C_{2}\leq \varphi(C_{2})\leq C_{2}-1\leq C-1$ and $P(\xi)\leq v\ll \log C_{1}\leq \log C$, or
alternatively directly from Theorem~\ref{pbasic}.
Recall $A$ is called primitive root modulo $B$
if $ord_{A} \bmod B=\varphi(B)$.
 
\begin{corollary} 
	Let $b,W, \Delta$ as in Theorem~\ref{pthm}.
	Let $p_{0},q_{0}$ be coprime integers and assume $b$ is a 
	primitive root modulo $q_{0}$. 
	Let \eqref{eq:ain} be the base $b$ expansion of $\xi=p_{0}/q_{0}$. 
	Assume not all $c_{j}$ belong to $W$ and let $i$ be the smallest index for which the digit $c_{i}$ lies outside $W$. 
	Then $i\ll N^{\Delta}(\log\log N)^{\Delta}$,
	i.e. $c_{i}\notin W$ for some $i\ll N^{\Delta}(\log\log N)^{\Delta}$ with an absolute
	implied constant.    
	In particular, any digit $v\in\{0,1,2,\ldots,b-1\}$ that occurs among the $c_{i}$ in \eqref{eq:ain} already occurs within the
	first $\ll N^{\log (b-1)/\log b}$ places.
\end{corollary}

\begin{proof}
	If $b$ is a primitive root modulo $q_{0}$ then by Proposition~\ref{pr}
	the number $\xi=p_{0}/q_{0}$ has no preperiod (i.e. $k=0$)
	and the period length $N$ satisfies $N=ord_{b}\bmod q_{0}=\varphi(q_{0})\gg q_{0}/\log\log q_{0}$. Thus,
	using the notation of Theorem~\ref{pthm}, from its claim we infer
	$i=\phi(\xi)\ll q_{0}^{\Delta}\ll (N \log\log N)^{\Delta}$, as desired.
	For the particular case, apply the above observation to $W=\{0,1,\ldots,b-1\}\setminus\{v\}$ of cardinality $b-1$, which induces $\Delta=\log(b-1)/\log b$.
\end{proof}

An equivalent way
to state the
claim of Theorem~\ref{pthm} is that numbers
$p,q$ as in \eqref{eq:asinas} have greatest common divisor
at most $\ll b^{N}/\phi(\xi)^{1/\Delta}$.
If we let $k=0$, 
another variant on the base $b$
expansion of large divisors of 
numbers of the form $b^{N}-1$ is obtained. 

\begin{corollary} \label{korolar}
Let $b, W$ and $\Delta<1$ as in Theorem~\ref{pthm}.
Let $\psi$ and $\phi$ be positive integers
with the property $\phi\geq c_{1}\psi^{\Delta}$ for
sufficiently large $c_{1}=c_{1}(b)>0$.
Assume for some integer $N\geq 1$ we
have $d^{\prime}\geq b^{N}/\psi$ divides $b^{N}-1$
and is written in base $b$ as
\begin{equation} \label{eq:letters}
d^{\prime}= (u_{0}u_{1}\cdots u_{N-1})_{b}= 
u_{0}b^{N-1}+u_{1}b^{N-2}+\cdots+u_{N-2}b+u_{N-1},
\end{equation} 
with possibly $c_{0}=c_{1}=\cdots=c_{s}=0$ for some $s\geq 0$, and assume all
$c_{i}\in W$.
Then for 
\begin{align*}
W_{1}= \{ u_{j}: 0\leq j\leq \phi\},\qquad 
\; W_{2}= \{ u_{j}: 0\leq j\leq N-1\},
\end{align*}
the letters occurring in the first $\phi$
and all digits of $d^{\prime}$, respectively,
we have $W_{1}=W_{2}$. 

In particular, if $\phi=\phi(N)=\lfloor rN^{\Delta}\rfloor$ 
for some $r\in(0,1]$,
and $\psi=\psi(N)=RN$ for 
sufficiently small $R=R(b,r)>0$, the following holds.
Assume $b^{N}-1$ has a small divisor $d_{N}\leq RN$,
and write the complementary divisor $d_{N}^{\prime}=(b^{N}-1)/d_{N}$
as in \eqref{eq:letters}.
The sets of digits that 
occur within $\{u_{0},u_{1},\ldots,u_{\lfloor N^{\Delta}r\rfloor}\}$
and 
$\{u_{0},u_{1},\ldots,u_{N-1}\}$ coincide.
\end{corollary}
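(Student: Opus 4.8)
The plan is to read the statement off from Theorem~\ref{pthm} applied to the rational number $\xi=d'/(b^{N}-1)$. Since $d'\mid b^{N}-1$ we have $\gcd(d',b^{N}-1)=d'$, so in lowest terms $\xi=1/q_{0}$ with $q_{0}=(b^{N}-1)/d'$; the hypothesis $d'\geq b^{N}/\psi$ then gives $q_{0}\leq\psi$. Moreover $\xi=(0.\overline{u_{0}u_{1}\cdots u_{N-1}})_{b}$, which is precisely of the form \eqref{eq:ain} with $k=0$ and period word $u_{0}\cdots u_{N-1}$ (leading zeros of $d'$ allowed, as the statement permits). First I would dispose of the degenerate cases: $d'=b^{N}-1$ (then $\xi=1$, all $u_{i}=b-1$, and $W_{1}=W_{2}$ trivially), $|W|\leq 1$ (then $W_{2}$ is a single digit), and $\phi\geq N-1$ (then $W_{1}=W_{2}$ by definition). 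Henceforth $\xi\in(0,1)$, $|W|\geq 2$ and $\phi<N-1$.

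Next I would argue by contradiction. Suppose some digit $v$ occurs among $u_{0},\ldots,u_{N-1}$ but not among $u_{0},\ldots,u_{\phi}$, i.e.\ $v\in W_{2}\setminus W_{1}$. Since every $u_{i}$ lies in $W$, also $v\in W$; put $W^{*}=W\setminus\{v\}$ and $\Delta^{*}=\log|W^{*}|/\log b$, so $\Delta^{*}\leq\Delta$. With respect to the digit set $W^{*}$, the smallest index $i$ with $u_{i}\notin W^{*}$ is exactly the first occurrence of $v$, hence $\phi(\xi)>\phi$ in the notation of Theorem~\ref{pthm}. But Theorem~\ref{pthm}, whose implied constant comes from the factor $(2b)^{\Delta^{*}}$ in \eqref{eq:null} of Theorem~\ref{bfr} and is therefore bounded in terms of $b$ alone, gives $\phi(\xi)\leq c\,q_{0}^{\Delta^{*}}$; using $q_{0}\geq 1$, $\Delta^{*}\leq\Delta$ and $q_{0}\leq\psi$ this is at most $c\,\psi^{\Delta}\leq(c/c_{1})\phi\leq\phi$ once $c_{1}\geq c$, contradicting $\phi(\xi)>\phi$. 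Therefore $W_{1}=W_{2}$. (If one drops the assumption that all $u_{i}\in W$, the same argument runs on replacing $W$ by $\{0,1,\ldots,b-1\}\setminus\{v\}$ for each digit $v$ occurring in $d'$, at the cost of replacing $\Delta$ by $\log(b-1)/\log b<1$.)

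For the final ("in particular") assertion I would apply the above with $\phi=\lfloor rN^{\Delta}\rfloor$ and $\psi=RN$. A small divisor $d_{N}\leq RN$ of $b^{N}-1$ produces the complementary divisor $d_{N}'=(b^{N}-1)/d_{N}$, whose associated reduced denominator is $(b^{N}-1)/d_{N}'=d_{N}\leq RN=\psi$, so the hypotheses on $d'$ are met. It remains to secure $\phi\geq c_{1}\psi^{\Delta}$: for all large $N$ one has $\lfloor rN^{\Delta}\rfloor\geq\frac{r}{2}N^{\Delta}$, so it suffices that $c_{1}(RN)^{\Delta}\leq\frac{r}{2}N^{\Delta}$, i.e.\ $R\leq(r/(2c_{1}))^{1/\Delta}$, and this is exactly how $R=R(b,r)$ is chosen. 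The conclusion $W_{1}=W_{2}$ is then the asserted coincidence of digit sets.

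The one genuinely delicate point I foresee is verifying the uniformity, over the choice of the removed digit $v$, of the implied constant in Theorem~\ref{pthm}: one must confirm it depends only on $b$ and not on the auxiliary set $W^{*}$, so that a single $c_{1}=c_{1}(b)$ works simultaneously for every $v$. Everything else—handling leading zeros and the degenerate cases above, and the elementary estimate $\lfloor rN^{\Delta}\rfloor\geq\frac{r}{2}N^{\Delta}$ for large $N$—is routine.
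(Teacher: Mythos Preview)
Your proof is correct and follows essentially the same strategy as the paper: assume a digit $v\in W_{2}\setminus W_{1}$, apply Theorem~\ref{pthm} to the digit set obtained by deleting $v$, and contradict the bound $q_{0}\leq\psi$. Your choice to delete $v$ from the given $W$ (so that $\Delta^{*}\leq\Delta$) is in fact slightly cleaner than the paper's choice of $\{0,1,\ldots,b-1\}\setminus\{v\}$, and your uniformity concern about the implied constant is correctly resolved by the observation $(2b)^{\Delta^{*}}<2b$.
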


In particular if we take $W$ of cardinality $b-1$, we see that 
for any large divisor of $b^{N}-1$, all occurring
digits in its
base $b$ expansion already can be found in a relatively
short initial sequence.
Generically, for large $N$ we expect 
$W_{1}=W_{2}=\{0,1,\ldots,b-1\}$. However,
$d_{N}^{\prime}=(b^{N}-1)/(b-1)$ 
in base
$b$ reads $(1^{N})_{b}$, so $W_{1}=W_{2}=\{1\}$. 
In this case we easily verify the claim. 
We end this section with an example.

\begin{example}
Let $b=3, W=\{0,1\}$ with $\Delta=\log 2/\log 3$. Then 
\[
\rm{gcd}(\epsilon_{0}3^{N-1}+\epsilon_{1}3^{N-2}+\cdots +\epsilon_{N-2}3^{1}+2, 3^{N}-1)\ll \frac{3^{N}}{N^{1/\Delta}},
\]
for any choice of $\epsilon_{j}\in\{0,1\}$, $0\leq j\leq N-2$ (observe that
$\epsilon_{N-1}=2\notin W$). Thus,
$3^{N}-1$ cannot have a very large divisor given by the sum expression. 
Consider special cases. First, assume
$\epsilon_{j}=0$ for $0\leq j\leq N-m$ and $\epsilon_{j}=1$
for $N-m+1\leq j\leq N-2$, for some integer $2\leq m\leq N+1$. Then the sum expression 
results in $(3^{m}+1)/2$, thus
\[
\rm{gcd}(\frac{3^{m}+1}{2}, 3^{N}-1)\ll \frac{3^{N}}{N^{1/\Delta}},\qquad \qquad 0\leq m\leq N-2.
\]
In particular the assumption
$(3^{m}+1)\vert (2\cdot (3^{N}-1))$
implies $3^{m}\ll 3^{N}/N^{1/\Delta}$, or 
equivalently $m<N-\log N/\log 2+c$
for $c\in\mathbb{R}$.
In this particular example,
in fact the divisibility condition 
implies the stronger (sharp) estimate $m\leq N/2$,
since $(3^{m}+1)\vert (2\cdot (3^{N}-1))$
is equivalent to $(3^{m}+1)\vert (2\cdot (3^{N-m}+1))$.
As a second special case let $\epsilon_{j}=1$ precisely for $j$ a power of $2$
and $\epsilon_{j}=0$ otherwise.
We are unable to find an elementary proof that confirms our implication
\[
\rm{gcd}(3^{2^{k}}+3^{2^{k-1}}+\cdots +
3^{2^{0}}+2, 3^{N}-1)\ll \frac{3^{N}}{N^{1/\Delta}}, \qquad k<\frac{\log N}{\log 2}.
\]
\end{example}

\section{Proofs}

\subsection{Proof of Theorem~\ref{ver}}

The crucial step
for the proof of Theorem~\ref{ver} is to extend~\cite[Lemma~2.2]{fs}.
Fortunately, this can be derived rather easily. Compared to~\cite{fs}, it does not involve the measure supported
on $C$, which might not be nice in our general setting,
but stems from the following elementary counting argument.

\begin{proposition} \label{ppr}
	Let $C$ be any Cantor set as in Definition~\ref{deff} and let 
	$\tau=\max_{1\leq j\leq J} \tau_{j}$ be the maximum contraction ratio of the IFS.
	 If $l\geq 1$ is an integer and
	$E$ a subset of $C$
	of cardinality greater than $J^{l}$, then there are two elements
	$\underline{x},\underline{y}$ in $E$ with distance  
	$\Vert \underline{x}-\underline{y}\Vert\leq diam \cdot \tau^{l}$, where $diam$ is the diameter of the compact
	set $C$. 
\end{proposition}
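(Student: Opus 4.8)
The plan is to use a straightforward pigeonhole argument on the level-$l$ cylinder sets of the IFS. First I would recall that $C = \bigcup_{1\le j\le J} f_j(C)$, so iterating this identity $l$ times gives
\[
C = \bigcup_{(\omega_1,\ldots,\omega_l)\in\{1,\ldots,J\}^{l}} f_{\omega_1}\circ f_{\omega_2}\circ\cdots\circ f_{\omega_l}(C).
\]
This decomposes $C$ into at most $J^{l}$ pieces (''level-$l$ cylinders''), one for each word $\underline{\omega}=(\omega_1,\ldots,\omega_l)$ of length $l$ in the alphabet $\{1,\ldots,J\}$. The key observation is that each such piece has small diameter: since each $f_j$ contracts distances by a factor at most $\tau$, the composition $f_{\omega_1}\circ\cdots\circ f_{\omega_l}$ contracts by a factor at most $\tau^{l}$, hence
\[
\operatorname{diam}\bigl(f_{\omega_1}\circ\cdots\circ f_{\omega_l}(C)\bigr)\le \tau^{l}\cdot\operatorname{diam}(C)
\]
for every word $\underline{\omega}$.

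Next I would apply the pigeonhole principle. Since $E\subseteq C$ and $C$ is covered by at most $J^{l}$ cylinders of level $l$, and since $|E|>J^{l}$, at least two distinct elements $\underline{x},\underline{y}\in E$ must lie in the same level-$l$ cylinder $f_{\omega_1}\circ\cdots\circ f_{\omega_l}(C)$. For these two elements we get immediately
\[
\Vert \underline{x}-\underline{y}\Vert \le \operatorname{diam}\bigl(f_{\omega_1}\circ\cdots\circ f_{\omega_l}(C)\bigr)\le \operatorname{diam}(C)\cdot\tau^{l},
\]
which is exactly the claimed bound.

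There is essentially no hard part here — the argument is a direct combination of the self-similarity relation, the contraction property, and pigeonhole. The only points requiring a word of care are: (i) noting that the union over length-$l$ words really does cover all of $C$ (immediate from iterating $\cup_j f_j(C)=C$), and (ii) that the number of such words is at most $J^{l}$ (it is exactly $J^{l}$, but some cylinders may coincide or be empty-looking, which only helps). Neither of these presents a genuine obstacle; the main ''content'' of the proposition is simply that it isolates this elementary counting fact so that later proofs (e.g.\ of Theorem~\ref{ver} and Theorem~\ref{yzok}) can invoke it without re-deriving it and without recourse to the natural measure on $C$, which need not be well-behaved in the generality of Definition~\ref{deff}.
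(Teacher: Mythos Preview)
Your proposal is correct and follows essentially the same approach as the paper: both arguments use pigeonhole on the $J^{l}$ level-$l$ words/cylinders and the fact that a composition of $l$ contractions shrinks distances by at most $\tau^{l}$. The paper phrases this via addresses (two points sharing a length-$l$ prefix are images under the same composition of some $\underline{a},\underline{b}\in C$), while you phrase it via the cylinder decomposition $C=\bigcup_{\underline{\omega}} f_{\omega_1}\circ\cdots\circ f_{\omega_l}(C)$; the content is identical.
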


\begin{proof}
	For any $\underline{\alpha} \in E$
	let $(\omega_{1,\underline{\alpha}},\omega_{2,\underline{\alpha}},\ldots)$
	be any address of $\underline{\alpha}$ and let $\pi_{j,\underline{\alpha}}=f_{\omega_{j,\underline{\alpha}}}$.
	Then by pigeon hole principle since $\vert E\vert>J^{l}$
	there are two elements $\underline{x},\underline{y}$ 
	whose prefices $(\omega_{1,\underline{x}},\ldots,\omega_{l,\underline{x}})$
	and $(\omega_{1,\underline{y}},\ldots,\omega_{l,\underline{y}})$
	coincide, so $\pi_{j}:=\pi_{j,\underline{x}}=\pi_{j,\underline{y}}$ for $1\leq j\leq l$. 
	Let
	\[
	\underline{a}=\lim_{n\to\infty} \pi_{l+1,\underline{x}}\circ \pi_{l+2,\underline{x}}\cdots\circ \pi_{l+n,\underline{x}}(\underline{0}),\qquad
	\underline{b}=\lim_{n\to\infty} \pi_{l+1,\underline{y}}\circ \pi_{l+2,\underline{y}}\cdots\circ \pi_{l+n,\underline{y}}(\underline{0}).
	\]
	Clearly $\underline{a},\underline{b}\in C$. Then we have
	\[
	\Vert \underline{x}-\underline{y}\Vert=
	\Vert \pi_{1}\circ \pi_{2}\circ \cdots \circ \pi_{l}(\underline{a})- \pi_{1}\circ \pi_{2}\circ \cdots \circ \pi_{l}(\underline{b})\Vert
	\leq \tau^{l}\Vert \underline{a}-\underline{b}\Vert.
	\]
Since  $\underline{a},\underline{b}\in C$ their distance is bounded above by $diam$
and the claim follows. 
\end{proof}

From the proposition we immediately obtain the required
variant of~\cite[Lemma~2.2]{fs}.

\begin{lemma} \label{hoben}
	Let $C,\tau$ be as in Proposition~\ref{ppr}. Then,
	if $N\geq 1$ is an integer and
	$\underline{\xi}_{1},\ldots,\underline{\xi}_{N}$ belong to $C$,
	there is constant $K_{1}>0$ and two indices $1\leq i<j\leq N$ with 
	\[
	\Vert \underline{\xi}_{i}-\underline{\xi}_{j}\Vert \leq 
	r_{N}:= (N/K_{1})^{\log \tau/\log J}.
	\]
	We may choose $K_{1}=(diam/\tau)^{-\log J/ \log\tau}$. 
\end{lemma}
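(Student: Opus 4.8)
The plan is to derive Lemma~\ref{hoben} directly from Proposition~\ref{ppr} by choosing the integer $l$ appropriately. Given the $N$ points $\underline{\xi}_{1},\ldots,\underline{\xi}_{N}\in C$, they form a subset $E\subseteq C$ of cardinality $N$ (assuming they are distinct; if two coincide the claim is trivial). To apply Proposition~\ref{ppr} we need an integer $l\geq 1$ with $J^{l}<N$, and then we will get two points at distance at most $diam\cdot\tau^{l}$. To make this bound as small as possible we want $l$ as large as possible, so the natural choice is $l=\lfloor \log N/\log J\rfloor$, or more conveniently the largest integer strictly below $\log N/\log J$.

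First I would set $l$ to be this largest admissible integer, so that $J^{l}<N$ (hence Proposition~\ref{ppr} applies) while $J^{l+1}\geq N$, i.e. $J^{l}\geq N/J$. Then Proposition~\ref{ppr} yields indices $i<j$ with $\Vert\underline{\xi}_{i}-\underline{\xi}_{j}\Vert\leq diam\cdot\tau^{l}$. Next I would bound $\tau^{l}$ from above using $J^{l}\geq N/J$: since $\tau<1$, taking logarithms gives $l\log\tau\leq (\log\tau/\log J)\log(N/J)$ because $\log\tau<0$ and $l\geq \log(N/J)/\log J$, so $\tau^{l}\leq (N/J)^{\log\tau/\log J}$. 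Therefore
\[
\Vert\underline{\xi}_{i}-\underline{\xi}_{j}\Vert\leq diam\cdot (N/J)^{\log\tau/\log J}.
\]
Finally I would rewrite the right-hand side in the stated form $(N/K_{1})^{\log\tau/\log J}$: writing $\log\tau/\log J=:-D$ (negative), we need $diam\cdot(N/J)^{-D}=(N/K_{1})^{-D}$, which forces $K_{1}^{-D}=diam\cdot J^{-D}$, i.e. $K_{1}=diam^{-1/D}\cdot J=(diam)^{\log J/\log\tau}\cdot J$. Hmm — this differs slightly from the paper's stated $K_{1}=(diam/\tau)^{-\log J/\log\tau}$, so I would double-check the exponent bookkeeping; the discrepancy is just a constant factor absorbing one extra application of $\tau$ versus $J$, and the cleanest route is to instead take $l$ to be the largest integer with $J^{l}<N$ and then use $J^{l+1}\geq N$ together with the crude bound $\tau^{l}=\tau^{l+1}/\tau\leq\tau^{l+1}\cdot\tau^{-1}$, matching $(diam/\tau)$ as the effective diameter. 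I expect the only real subtlety here to be getting this constant exactly right, which is bookkeeping rather than mathematics; there is no genuine obstacle, since the combinatorial heart of the argument is entirely contained in Proposition~\ref{ppr}.
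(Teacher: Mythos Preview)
Your approach is exactly the paper's: choose the integer $l$ with $J^{l}<N\leq J^{l+1}$, apply Proposition~\ref{ppr}, and bound $\tau^{l}$ via $l\geq \log N/\log J-1$. Your only slip is a sign in the $K_{1}$ algebra---from $diam\cdot(N/J)^{-D}=(N/K_{1})^{-D}$ one gets $K_{1}^{D}=diam\cdot J^{D}$, i.e.\ $K_{1}=diam^{1/D}\cdot J$, which is precisely $(diam/\tau)^{-\log J/\log\tau}$ since $\tau^{\log J/\log\tau}=J$; so there is no actual discrepancy.
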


\begin{proof}
	For given $N$ choose the integer $l\geq 0$ so that $J^{l}<N\leq J^{l+1}$.
	By Proposition~\ref{ppr} there are two elements in the sequence with distance
	at most $diam\cdot \tau^{l}\leq diam\cdot \tau^{\log N/\log J-1}= (diam/\tau)\cdot N^{\log \tau/\log J}$, which leads to $K_{1}$ in the theorem.
	\end{proof}

In the sequel we denote by $\Vert A\Vert_{\infty}:= \max_{\underline{x}\neq \underline{0}} \{\Vert A_{j}\underline{x}\Vert/\Vert\underline{x}\Vert\}$ the norm of a matrix $A\in\mathbb{R}^{d\times d}$.

\begin{proof}[Proof of Theorem~\ref{ver}]
	We proceed essentially as in the proof of~\cite[Theorem~2.1]{fs}.
	Let $\underline{\xi}\in C$ arbitrary.
	Fix an address $\omega=(\omega_{1},\ldots)\in \{1,2,\ldots,J\}^{\mathbb{N}}$ of it, 
	so that with $\pi_{j}=f_{\omega_{j}}$ we have
	$\underline{\xi}=\lim_{k\to\infty}\pi_{1}\circ \pi_{2}\circ \cdots \pi_{k}(\underline{0})$. If $\sigma$
	is the left shift on the space of infinite formal words, for every large
	$N$ we consider 
	$\{\sigma^{n}(\omega)(\underline{0}): 0\leq n\leq N\}$, a finite sequence
	in $C$.
	By Lemma~\ref{hoben} there are two integers $0\leq n< m+n\leq N$ so that if
	\begin{align*}
	\underline{y}&:=\sigma^{n}(\omega)(\underline{0})=\lim_{k\to\infty}\pi_{n+1}\circ \cdots\circ\pi_{n+k}(\underline{0}), \\ \underline{z}&:=\sigma^{m+n}(\omega)(\underline{0})=\lim_{k\to\infty}\pi_{m+n+1}\circ \cdots\circ\pi_{m+n+k}(\underline{0}),
	\end{align*}
	then
	$\Vert \underline{y}-\underline{z} \Vert\leq r_{N}$.
	Define endomorphisms on $\mathbb{R}^{d}$ by
	$u_{(1)}=\pi_{1}\circ \cdots \circ \pi_{n}$ and
	$u_{(2)}=\pi_{n+1}\circ \cdots \circ \pi_{m+n}$, so that
	$\underline{\xi}= u_{(1)}(\underline{y})$ and $\underline{y}=u_{(2)}(\underline{z})$. Consider  $A_{\omega_{j}}\in\mathbb{Z}^{d\times d}$
	and $q_{\omega_{j}}\in\mathbb{Z}$ as in the theorem. Then let
	\begin{align*}
	P_{(1)}&= A_{\omega_{1}}\cdots A_{\omega_{n}}, \qquad P_{(2)}= A_{\omega_{n+1}}\cdots A_{\omega_{n+m}}  \\
	q_{(1)}&= q_{\omega_{1}}\cdots q_{\omega_{n}}, \qquad\quad q_{(2)}= q_{\omega_{n+1}}\cdots q_{\omega_{n+m}}.
	\end{align*}
	Further with $S:= \prod_{1\leq j\leq J} s_{j}$ define the integer vectors
	\[
	\underline{r}_{(1)}= \sum_{i=1}^{n} A_{\omega_{1}}\cdots A_{\omega_{i-1}}\underline{r}_{\omega_{i}}q_{\omega_{i+1}}\cdots q_{\omega_{n}}, 
	\]
	and
	\[
	\underline{r}_{(2)}= \sum_{i=1}^{m} A_{\omega_{n+1}}\cdots A_{\omega_{n+i-1}}\underline{r}_{\omega_{n+i}}\frac{S}{s_{\omega_{n+i}}}q_{\omega_{n+i}}\cdots q_{\omega_{n+m}}.
	\]
	In $r_{(2)}$ the additional
	factor $S/(q_{\omega_{n+1}}s_{\omega_{n+i}})$ compared to~\cite{fs} has entered according
	to the notational difference $s_{j}\neq q_{j}$ for the shift vector. 
	By the recursive process, the maps $u_{(i)}$ are accordingly given as
	\[
	u_{(i)}(\underline{t})= \frac{P_{(i)} \underline{t}}{q_{(i)}}+\frac{\underline{r}_{(i)}}{Sq_{(i)}}, \qquad\qquad i=1,2.
	\]
	The unique fixed point of $u_{(2)}$ denoted by $\underline{F}_{2}$ is given by the solution of
	\[
	\underline{F}_{2}=\frac{P_{(2)}}{q_{(2)}}\underline{F}_{2}+ \frac{\underline{r}_{(2)}}{Sq_{(2)}},
	\]
	which yields
	\[
	\underline{F}_{2}= \frac{1}{S}(q_{(2)}I_{d}-P_{(2)})^{-1}\underline{r}_{(2)}.
	\]
	We carry out why
	the inverse matrix is well-defined. All $A_{j}$ have norm 
	$\Vert A_{j}\Vert_{\infty}<q_{j}$ since $A_{j}/q_{j}$ are contractions for $1\leq j\leq J$, thus their product
	which is $P_{(2)}$ has norm less than $q_{(2)}$, i.e.
	\begin{equation} \label{eq:infty}
	\Vert P_{(2)}\Vert_{\infty} < q_{(2)}.
	\end{equation}
	Consequently all eigenvalues of $P_{(2)}$ are of absolute value
	smaller than $q_{(2)}$, proving the regularity of $q_{(2)}I_{d}-P_{(2)}$.
	We also observe that $\underline{F}_{2}\in\mathbb{Q}^{d}$ by Cramer's rule,
	hence this applies to $u_{(1)}(F_{2})$ as well
	so we may write
	\[
	\underline{p}/q=u_{(1)}(F_{2})=
	\frac{P_{(1)}(q_{(2)}I_{d}-P_{(2)})^{-1}\underline{r}_{(2)}
		+\underline{r}_{(1)}}{Sq_{(1)}},
	\]
	with $\underline{p}\in\mathbb{Z}^{d}$ and $q\in\mathbb{N}$. 
	Concretely by Cramer's rule the inverse matrix contributes a factor $\det (q_{(2)}I_{d}-P_{(2)})>0$ in the denominator, so the total denominator will be
	$q=Sq_{(1)}\det (q_{(2)}I_{d}-P_{(2)})$ if we do not simplify the vector $\underline{p}/q$ to lowest terms. 
	For any matrix $A\in\mathbb{R}^{d\times d}$ 
	we have $\vert\det A\vert\leq \Vert A\Vert_{\infty}^{d}$ (determinant
	is at most product of column norms, and columns are images of canonical base vectors), and from
	\eqref{eq:infty} we infer $\Vert q_{(2)}I_{d}-P_{(2)}\Vert_{\infty}\leq \Vert q_{(2)}I_{d}\Vert_{\infty}+\Vert P_{(2)}\Vert_{\infty}\leq
	2q_{(2)}$. Applied to $A=q_{(2)}I_{d}-P_{(2)}$
	we derive
	\begin{equation} \label{eq:ee}
	0<q\leq Sq_{(1)}\cdot (2q_{(2)})^{d}\leq S(2q_{(1)}q_{(2)})^{d}.
	\end{equation}
	Taking logarithms yields
	\begin{equation} \label{eq:tier}
	\log q\leq d \sum_{i=1}^{m+n} \log q_{\omega_{i}}+c,
	\end{equation}
with the constant $c=d\log 2+\log S$. Let $\tau_{(i)}$ be the contraction
	rates on $u_{(i)}$, $i=1,2$. Then, on the other hand
	\[
	\Vert \underline{y}-\underline{F}_{2}\Vert \leq \tau_{(2)}\Vert \underline{z}-\underline{F}_{2}\Vert \leq
	\tau_{(2)}\Vert \underline{y}-\underline{F}_{2}\Vert+\tau_{(2)}\Vert \underline{z}-\underline{y}\Vert,
	\]
	hence
	\[
	\Vert \underline{y}-\underline{F}_{2}\Vert \leq \frac{\mu_{(2)}}{1-\mu_{(2)}}\Vert \underline{z}-\underline{y}\Vert.
	\]
	Applying $u_{(1)}$ gives
	\[
	\Vert \underline{\xi}-\underline{p}/q\Vert \leq \frac{\tau_{(1)}\tau_{(2)}}{1-\tau_{(2)}} \Vert \underline{z}-\underline{y} \Vert
	\leq \frac{\tau_{(1)}\tau_{(2)}}{1-\tau_{(2)}} r_{N}.
	\]
	Now $\tau_{(2)}\leq \max_{1\leq j\leq J} \tau_{j}=\tau$, thus with $K_{2}= (1-\tau)^{-1}$ we infer
	\[
	\log \Vert \underline{\xi}-\underline{p}/q\Vert \leq \log K_{2}+\log r_{N}+
	\mu\sum_{i=1}^{m+n} \log \tau_{\omega_{i}}+c.
	\]
	By assumption $\log \tau_{\omega_{i}}=\mu_{\omega_{i}}\log q_{i}\leq \mu \log q_{i}$
	for every $1\leq i\leq m$. Thus from \eqref{eq:tier} and since $\mu\leq 0$ further
	\[
	\log \Vert \underline{\xi}-\underline{p}/q\Vert \leq \log K_{2}+\log r_{N}+
	\frac{\mu}{d} \log q+ \frac{c}{d},
	\]
	and after
	exponentiating again we get
	\begin{equation} \label{eq:fromc}
	\Vert \underline{\xi}-\underline{p}/q\Vert 
	\leq  K_{3}r_{N}q^{\mu/d},
	\end{equation}
	with some constant $K_{3}$. 
	On the other hand, again from \eqref{eq:ee} we derive
	\[
	\log q\leq d\sum_{i=1}^{m+n} \log q_{\omega_{i}}+d\log 2+\log S.
	\]
	Let $q_{max}=\max_{1\leq j\leq J} q_{j}$.
	Finally since $q\leq  S\cdot 2^{d}q_{max}^{d(m+n)}\leq  S\cdot 2^{dN}q_{max}^{dN}= S\cdot (2q_{max})^{dN}$, for 
	$N:= \lfloor \log Q/(d(\log (2q_{max}))+\log S)\rfloor$ and $Q\geq S2^{d}q_{max}^{d}$ we conclude $q\leq Q$ and 
	\[
	r_{N}\leq \left(\frac{N}{K_{1}}\right)^{\log \tau/\log J}\leq K_{4}
	\left(\frac{\log Q}{2dK_{1}\log q_{max}}\right)^{\log \tau/\log J}
	=K_{5} \log Q^{\log \tau/\log J}.
	\]
	Putting $K_{1}=K_{3}K_{5}$ we derive the claim from \eqref{eq:fromc}. 
\end{proof}

\subsection{Proofs from Section~\ref{3}}

For the proof of Theorem~\ref{thm110} we utilize an auxiliary result established
within the proof of~\cite[Theorem~1.1]{fishsimmons}. It deals
with almost-arithmetic sequences in Cantor sets. While
originally formulated only for similarity Cantor sets, as
pointed out in Section~\ref{s2} we may extend its claim to
Cantor sets as in Definition~\ref{deff}. 
We do not rephrase the notion of almost-arithmetic from~\cite{fishsimmons} and restrict to the arithmetic sequence setting that suffices for our proof.  

\begin{lemma}[Fishman, Simmons] \label{fslemma}
	Let $C\subseteq \mathbb{R}^{d}$ 
	be a Cantor set which satisfies OSC. There
	exists a positive integer $N=N(C)$ with the property
	that if set $C$ contains
		an arithmetic progression of length $N$, then the entire line
		segment joining these points is contained in $C$. In particular,
		if $C$ contains no line segment, then no non-constant
		arithmetic progression of length $N$ is contained in $C$.
\end{lemma}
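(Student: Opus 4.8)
The plan is to exploit the self-similarity $C=\bigcup_{|w|=k}f_{w}(C)$ together with the separation furnished by OSC, following the scheme in \cite{fishsimmons}, and to read the quantitative constant $N=N(C)$ off a compactness argument. First I would record the trivial observation: if $\{\underline{a}+i\underline{v}:0\le i<\ell\}\subseteq C$ with $\underline{v}\neq\underline{0}$, then the points are collinear and the segment joining them has length $s=(\ell-1)\Vert\underline{v}\Vert\le diam(C)$, so that a long progression automatically has small gap relative to its span, but only \emph{if} its span is comparable to $diam(C)$. Hence the real content is a renormalisation step: starting from a progression of length $\ell$ in $C$ whose span $s$ is much smaller than $diam(C)$, produce a progression in $C$ whose length still grows without bound as $\ell\to\infty$ but whose span is bounded below by a fixed fraction of $diam(C)$. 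Granting this, one iterates to obtain progressions in $C$ of lengths tending to infinity and spans bounded below; their endpoints lie in the compact set $C$ and their directions on the unit sphere, so a subsequence converges to a genuine segment $[\underline{x},\underline{y}]$ with $\Vert\underline{x}-\underline{y}\Vert>0$, and since the gaps tend to $0$ every point of $[\underline{x},\underline{y}]$ is a limit of points of $C$, i.e. $[\underline{x},\underline{y}]\subseteq\overline{C}=C$.

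For the renormalisation I would fix $k=k(s)$ so that the level-$k$ cylinders $f_{w}(C)$, $|w|=k$, have diameters comparable to $s$ (for a similarity IFS, $diam(f_{w}(C))=\big(\prod_{i\le k}\tau_{w_{i}}\big)diam(C)$, so this is merely a choice of scale; in the general setting of Definition~\ref{deff} one invokes the two-sided Lipschitz bound coming from $\tau=\max_{j}\tau_{j}$ and the lower constant $\kappa$ of \eqref{eq:hypot} to control how eccentric the cylinders can be). Since the progression has span $\asymp$ the cylinder diameter while consecutive terms are only $\Vert\underline{v}\Vert=s/(\ell-1)$ apart, the disjointness of the open sets $f_{w}(O)$ provided by OSC, together with the fact that an interval meeting one of these sets does so in a sub-interval, forces a whole \emph{consecutive block} of the progression — of length still growing to infinity with $\ell$ after accounting for the losses — to lie in a single cylinder $f_{w_{0}}(C)$. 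Applying the inverse similarity $f_{w_{0}}^{-1}$, which expands lengths by $\asymp diam(C)/s$, carries this block to a genuine arithmetic progression inside $C$ of span $\asymp diam(C)$, as desired. The quantitative statement is then obtained by running this dichotomy contrapositively and book-keeping the loss incurred at each renormalisation: if $C$ contains no segment, the process cannot be iterated beyond a number of steps bounded in terms of $C$, which caps the length of any progression contained in $C$ and yields the threshold $N(C)$; a refinement of the same zooming argument upgrades ``$C$ contains \emph{a} segment'' to ``$C$ contains the segment spanned by the given $N$ points'', since that segment is seen to carry progressions of every density.

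The main obstacle is exactly the renormalisation step, and within it the claim that a substantial consecutive block of the progression localises, \emph{as a genuine sub-progression}, inside a single rescaled copy of $C$: a priori the intersection of a line with a Cantor-type set can be shattered into arbitrarily many small pieces, and it is precisely the OSC-separation of the open cylinders $f_{w}(O)$ (the ``bad'' part of $C$, where this could fail, sitting in a small boundary set) that rules this out. A secondary point, relevant only past the similarity case, is bounding the eccentricity of the cylinders $f_{w}(C)$; this is what forces the use of the local hypothesis \eqref{eq:hypot}, and is the modification referred to in Section~\ref{s2}. All of this is carried out inside the proof of~\cite[Lemma~2.12]{fishsimmons} for similarity Cantor sets, and the argument transfers verbatim to the generality of Definition~\ref{deff} once \eqref{eq:hypot} is in force; accordingly I would simply quote it, as it is used here only as an auxiliary input to Theorem~\ref{thm110}.
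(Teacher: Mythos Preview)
The paper does not give a proof of this lemma at all: it is stated as a result of Fishman and Simmons, with the argument referenced to the proof of~\cite[Theorem~1.1]{fishsimmons} (and, for the extension beyond similarity IFS, to the modification of~\cite[Lemma~2.12]{fishsimmons} discussed after Theorem~\ref{fs2}). Your closing sentence --- that one should simply quote the result as an auxiliary input to Theorem~\ref{thm110} --- is therefore exactly what the paper does, and nothing more is required here.

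That said, your sketch of the renormalisation-and-compactness mechanism is a fair outline of how the Fishman--Simmons argument proceeds, and in that sense goes well beyond what the paper supplies. One point worth flagging: the lemma as stated in the paper omits the bi-Lipschitz hypothesis~\eqref{eq:hypot}, but (as you correctly note) this is precisely what controls the eccentricity of the cylinders in the non-similarity case and is what the paper means when it says the claim ``extends'' via Section~\ref{s2}; the paper's own application in the proof of Theorem~\ref{thm110} only invokes the lemma under the first bullet, where~\eqref{eq:hypot} is assumed. A second, minor point: your compactness limit produces \emph{some} segment in $C$, and you defer to a ``refinement'' for the stronger conclusion that the segment through the \emph{given} $N$ points lies in $C$; this upgrade is indeed part of what~\cite{fishsimmons} establishes, but if you were writing out the proof in full rather than citing, that step would need to be made explicit.
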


The lemma will apply to the first condition of Theorem~\ref{thm110}.
For the implication from the latter condition, we will employ 
our counting result Theorem~\ref{yzok}

\begin{proof}[Proof of Theorem~\ref{thm110}] 
	As pointed out the claim is almost an immediate consequence of Lemma~\ref{fslemma}. 
	In our situation, given $\underline{\xi}\in C$ and $Q>1$, we consider $\underline{p}/\lfloor Q\rfloor$ 
	for $\underline{p}=(p_{1},\ldots,p_{d})$ chosen such that
	$p_{i}/\lfloor Q\rfloor$ is the rational number with denominator
	$\lfloor Q\rfloor$ that is closest 
	to the $i$-th coordinate
	$\xi_{i}$ of $\underline{\xi}\in C$. Clearly $\Vert \underline{\xi}-\underline{p}/\lfloor Q\rfloor\Vert \ll Q^{-1}$. 
	Assume the first condition of the theorem.
	Let $N=N(C)$ be large enough that $C$ contains no
	arithmetic sequence of length $N$ as in Lemma~\ref{fslemma}. Then
	with $\underline{v}\in\mathbb{Z}^{d}$ as in the theorem,
	the rational vectors
	\[
	\underline{p}/\lfloor Q\rfloor,\; \underline{p}/\lfloor Q\rfloor+\frac{1}{\lfloor Q\rfloor}\underline{v},\;
	\underline{p}/\lfloor Q\rfloor+\frac{2}{\lfloor Q\rfloor}\underline{v},\;
	\ldots,\; \underline{p}/\lfloor Q\rfloor+\frac{N}{\lfloor Q\rfloor}\underline{v}
	\]
	have common denominator at most $\lfloor Q\rfloor\leq Q$ and form an arithmetic progression. Thus
	not all can belong to $C$ as otherwise by Lemma~\ref{fslemma} the line segment joining
	$\underline{p}/q$ and $\underline{p}/\lfloor Q\rfloor+N/\lfloor Q\rfloor\cdot \underline{v}$ would be contained
	in $C$, contradicting our hypothesis. On the other hand we easily see
	that any element of the progression has distance
	$\ll_{N,\underline{v}} Q^{-1}\ll_{C,\underline{v}} Q^{-1}$ 
	from $\underline{\xi}$. 
	
	Finally, assume the second condition $D<1/2$.
	Then for every prime number $N$ with $Q/2\leq N<Q$ 
	again let $\underline{p}_{N}/N$ be the rational
	vector with numerator coordinates $p_{N,i}$ chosen so that
	$p_{N,i}/N$ is closest to $\xi_{i}$. We see
	that $\Vert \underline{p}_{N}/N-\underline{\xi}\Vert\ll Q^{-1}$.
	Moreover, for $Q>diam$ these vectors are clearly pairwise distinct
	and by Prime Number Theorem there are $\gg Q/\log Q$ 
	such vectors. On the other hand, by Theorem~\ref{yzok} there are only 
	$\ll Q^{2D}$ rational vectors in $C$ with denominator at most $Q$.
	Thus if $D<1/2$, for large $Q$ some must lie outside $C$ and satisfy
	the desired property.
\end{proof}

The proof of Theorem~\ref{thm11} requires more preparation.
The next lemma comprises some estimates on continued fractions and is partly well-known.

\begin{lemma} \label{lele}
	Let $\xi$ be a real number with sequence of 
	convergents $(u_{t}/v_{t})_{t\geq 1}$.
	Then
	\begin{equation}  \label{eq:formu}
	\frac{1}{2v_{t}v_{t+1}}\leq \vert \xi-\frac{u_{t}}{v_{t}}\vert \leq \frac{1}{v_{t}v_{t+1}}, \qquad\qquad t\geq 1.
	\end{equation}
	Moreover, if $t\geq 2$ is given and
	$r/s\neq u_{t}/v_{t}$ is any rational number with $0<s<v_{t+1}/2$,
	then
	\begin{equation}  \label{eq:formu2}
	\vert s\xi-r\vert \geq
	\frac{1}{2v_{t}}.
	\end{equation}
\end{lemma}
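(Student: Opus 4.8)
The plan is to handle the two inequalities in \eqref{eq:formu} and \eqref{eq:formu2} separately; the first is entirely classical, and the second follows from it by a short triangle-inequality argument.

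For \eqref{eq:formu} I would use the standard representation of $\xi$ through its complete quotients. Writing $\xi=[a_0;a_1,a_2,\dots]$ and $\xi_{t+1}=[a_{t+1};a_{t+2},\dots]$, one has $\xi=\frac{\xi_{t+1}u_t+u_{t-1}}{\xi_{t+1}v_t+v_{t-1}}$, and combined with the identity $u_tv_{t-1}-u_{t-1}v_t=(-1)^{t-1}$ this gives the exact formula $\vert \xi-u_t/v_t\vert=\bigl(v_t(\xi_{t+1}v_t+v_{t-1})\bigr)^{-1}$. Since $a_{t+1}\le \xi_{t+1}<a_{t+1}+1$ and $v_{t+1}=a_{t+1}v_t+v_{t-1}$, we obtain $v_{t+1}\le \xi_{t+1}v_t+v_{t-1}<v_{t+1}+v_t\le 2v_{t+1}$, and \eqref{eq:formu} follows. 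I would keep this to a couple of lines, or simply cite a standard reference such as Khintchine's book.

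For \eqref{eq:formu2} I would argue by contradiction. Suppose $\vert s\xi-r\vert<\frac{1}{2v_t}$, so that $\vert \xi-r/s\vert<\frac{1}{2sv_t}$. Using the upper bound $\vert \xi-u_t/v_t\vert\le \frac{1}{v_tv_{t+1}}$ from \eqref{eq:formu} and the triangle inequality,
\[
\left\vert \frac{r}{s}-\frac{u_t}{v_t}\right\vert<\frac{1}{2sv_t}+\frac{1}{v_tv_{t+1}}.
\]
The hypothesis $s<v_{t+1}/2$ gives $2s<v_{t+1}$, hence $\frac{1}{v_tv_{t+1}}<\frac{1}{2sv_t}$, so $\left\vert \frac{r}{s}-\frac{u_t}{v_t}\right\vert<\frac{1}{sv_t}$. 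On the other hand $rv_t-su_t$ is a nonzero integer since $r/s\ne u_t/v_t$, whence $\left\vert \frac{r}{s}-\frac{u_t}{v_t}\right\vert=\frac{\vert rv_t-su_t\vert}{sv_t}\ge \frac{1}{sv_t}$, a contradiction. Therefore $\vert s\xi-r\vert\ge \frac{1}{2v_t}$.

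I do not expect a real obstacle; the only point demanding care is that both terms on the right-hand side of the displayed inequality are \emph{strictly} below $\frac{1}{2sv_t}$ (the second precisely because $2s<v_{t+1}$ is strict), so that the two estimates for $\vert r/s-u_t/v_t\vert$ genuinely contradict each other. As an alternative to invoking \eqref{eq:formu} inside the proof of \eqref{eq:formu2}, one could instead decompose $(s,r)=m(v_t,u_t)+n(v_{t-1},u_{t-1})$ with integers $m,n$ and use that $v_t\xi-u_t$ and $v_{t-1}\xi-u_{t-1}$ have opposite signs together with the bound on $m$ forced by $s<v_{t+1}/2$; but the contradiction argument above is shorter and self-contained given the first part.
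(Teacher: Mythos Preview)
Your proof is correct. For \eqref{eq:formu} both you and the paper treat this as classical; your complete-quotient computation is perfectly standard and slightly more explicit than the paper's sketch via alternating convergents.

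For \eqref{eq:formu2} the approaches genuinely differ. The paper proceeds via Minkowski's Second Convex Body Theorem: it introduces the lattice $\Lambda_{\xi}=\{(m,m\xi-n):m,n\in\mathbb{Z}\}$ and the box $K=[-v_t,v_t]\times[-v_{t+1}^{-1},v_{t+1}^{-1}]$, observes that $(u_t,u_t\xi-v_t)\in\Lambda_\xi\cap K$ so that $\lambda_1(K,\Lambda_\xi)\le 1$, and then Minkowski's inequality $\lambda_1\lambda_2\ge 2\det\Lambda_\xi/\operatorname{vol}(K)$ forces $\lambda_2\ge v_{t+1}/(2v_t)$; this means no lattice point independent of $(u_t,v_t)$ lies in $(v_{t+1}/(2v_t))K$, which is exactly the claim. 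Your argument is instead a short triangle-inequality contradiction using only the already-established upper bound in \eqref{eq:formu} and the integrality of $rv_t-su_t$. Your route is more elementary and entirely self-contained, and avoids invoking geometry of numbers for what is at heart a one-dimensional continued-fraction fact. The paper's route, on the other hand, is not gratuitous: the same successive-minima setup (with the same lattice $\Lambda_\xi$ and parametrised boxes) is reused in the proof of Theorem~\ref{bfr}, so presenting Lemma~\ref{lele} this way doubles as a warm-up for that machinery. Your alternative suggestion of decomposing $(s,r)$ in the basis $(v_t,u_t),(v_{t-1},u_{t-1})$ is closer in spirit to the paper's viewpoint, but still avoids Minkowski.
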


For the proof recall that for an origin-symmetric convex set $K\subseteq \mathbb{R}^{h}$ and a lattice $\Lambda\subseteq \mathbb{R}^{h}$, the $i$-th
successive minimum  $\lambda_{i}(K,\Lambda)$ of $\Lambda$ with
respect to $K$ is defined as the infimum
of real numbers $\lambda$ such that
$\Lambda\cap K$ contains $i$ linearly independent vectors (for $1\leq i \leq h$). Minkowski's Second Convex
Body Theorem then bounds the product of all successive minima
in terms of the volume of $K$, the fundamental area of the lattice $\Lambda$ and
$h$. In particular the case $h=2$ we will require reads as
\begin{equation}  \label{eq:diesegl}
 \frac{2\det \Lambda}{V(K)}\leq \lambda_{1}(K,\Lambda)\lambda_{2}(K,\Lambda)
 \leq \frac{4\det \Lambda}{V(K)}.
\end{equation}

\begin{proof}[Proof of Lemma~\ref{lele}]
	The inequalities of \eqref{eq:formu} are in fact
	well-known. Recall that two consecutive covergents have distance
	$u_{t+1}/v_{t+1}-u_{t}/v_{t}=(-1)^{t}/(v_{t}v_{t+1})$.
	In particular convergents
	lie alternatingly on the left and the right of the limit,
	and the right inequality follows.
	The left estimate follows similarly incorporating 
	also that $u_{t+1}/v_{t+1}$ lies closer
	to $\xi$ than $u_{t}/v_{t}$, also a well-known fact. 
	We have to show the second claim involving \eqref{eq:formu2}.
	Define the convex body
	\[
	K= [-v_{t}, v_{t}] \times [-v_{t+1}^{-1}, v_{t+1}^{-1}],
	\] 
	and the lattice
	\[
	\Lambda_{\xi}= \{ (m,m\xi-n)\in\mathbb{R}^{2}: m,n \in \mathbb{Z}  \}.
	\]
	The lattice has determinant $\det(\Lambda_{\xi})=1$ 
	and the rectangular convex body has volume $\rm{vol}(K)=4v_{t}/v_{t+1}$. 
	We may assume $\xi\in (0,1)$ and thus $0<u_{l}<v_{l}$ for all large $l$.
	Since we know from the theory of continued
	fractions and \eqref{eq:formu} that 
	\[
	\vert u_{t+1}\xi-v_{t+1}\vert < \vert u_{t}\xi-v_{t}\vert \leq \frac{1}{v_{t+1}},
	\]
	we see that the point $(u_{t}, u_{t}\xi-v_{t})$
	lies in $\Lambda_{\xi}\cap K$.
	Hence the first
	successive minimum $\lambda_{1}(K,\Lambda_{\xi})$
	of $\Lambda$ with respect to $K$ satisfies $\lambda_{1}(K,\Lambda_{\xi}) \leq 1$.
	By Minkoswki's second Convex Body Theorem \eqref{eq:diesegl}
	we conclude that the second 
	successive minimum satisfies
	\[
	\lambda_{2}(K,\Lambda_{\xi})\geq \frac{2\det(\Lambda_{\xi})}{V(K)\lambda_{1}(\Lambda_{\xi},K)}\geq \frac{v_{t+1}}{2v_{t}}.
	\] 
	Thus there is no lattice point linear independent from
	$(u_{t}, u_{t}\xi-v_{t})$ in the region
	\[
	\frac{v_{t+1}}{2v_{t}}\cdot K= [-\frac{v_{t+1}}{2},\frac{v_{t+1}}{2}]
	\times [-\frac{1}{2v_{t}},\frac{1}{2v_{t}}]. 
	\]
	In other words, for any $(r,s)$ linearly
	independent from $(u_{t},v_{t})$ and with $\vert s\vert\leq v_{t+1}/2$ we have
	$\vert s\xi-r\vert\geq 1/(2v_{t})$. The proof of the second
	claim is finished.
\end{proof}

For the density result in Theorem~\ref{thm11} we utilize the following lemma.

\begin{lemma} \label{kokomo}
	Let $C\subseteq \mathbb{R}^{d}$ be a Cantor set. Assume
	for every function $\Psi:\mathbb{N}\to \mathbb{R}_{>0}$ 
	there exists $\underline{\xi}\in C$ so that
	\[
	\Vert \underline{\xi}-\underline{p}/q\Vert \leq \Psi(q)
	\] 
	has infinitely many solutions $\underline{p}/q\in \mathbb{Q}\cap C$.
	Then the set of $\underline{\xi}$ with the same property is dense in $C$. 
\end{lemma}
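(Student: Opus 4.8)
The plan is to exploit the self-similarity of $C$: the hypothesis furnishes, for every prescribed target function, \emph{some} intrinsically well-approximable point somewhere in $C$, and by pushing it through a deep branch of the IFS one can drop such a point into an arbitrarily small ball around any given point of $C$. Concretely, fix $\underline{\eta}\in C$ and $\varepsilon>0$; it suffices to find a point of $B(\underline{\eta},\varepsilon)\cap C$ enjoying the property in the statement.

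First I would pick an address $(\omega_1,\omega_2,\dots)$ of $\underline{\eta}$ and set $g=f_{\omega_1}\circ\cdots\circ f_{\omega_k}$. Since $g$ is a composition of contractions, its Lipschitz constant is $L\le\tau^{k}<1$, so $\operatorname{diam} g(C)\le\tau^{k}\operatorname{diam} C$; as $\underline{\eta}\in g(C)$ (its address has prefix $(\omega_1,\dots,\omega_k)$), choosing $k$ large gives $g(C)\subseteq B(\underline{\eta},\varepsilon)$. Fix such a $k$. Now $g$ is again rational-preserving, and — this is the one place where more than the bare Cantor-set structure is used — in the affine rational-preserving setting relevant for the applications $g$ is injective and sends $\underline{p}/q\in\mathbb{Q}^{d}$ (in lowest terms) to a rational vector whose reduced denominator lies between $q/M$ and $Mq$ for a constant $M=M(g)$, obtained by clearing denominators in the affine coefficients of $g$ and of $g^{-1}$.

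Given the function $\Psi$ in the statement, put $\widetilde{\Psi}(q):=L^{-1}\min_{1\le q'\le Mq}\Psi(q')$, a positive function on $\mathbb{N}$, and apply the hypothesis to $\widetilde{\Psi}$: there is $\underline{\xi}\in C$ and infinitely many distinct $\underline{p}_{n}/q_{n}\in\mathbb{Q}\cap C$ with $\Vert\underline{\xi}-\underline{p}_{n}/q_{n}\Vert\le\widetilde{\Psi}(q_{n})$, and necessarily $q_{n}\to\infty$ (for fixed denominator only finitely many numerators land in the bounded set $C$). Set $\underline{\xi}'=g(\underline{\xi})$ and let $\underline{p}_{n}'/q_{n}'=g(\underline{p}_{n}/q_{n})$ in lowest terms. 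Then $\underline{\xi}'\in g(C)\subseteq B(\underline{\eta},\varepsilon)$, the $\underline{p}_{n}'/q_{n}'$ are infinitely many distinct elements of $\mathbb{Q}\cap g(C)\subseteq\mathbb{Q}\cap C$ with $q_{n}'\to\infty$ (from $q_{n}\le Mq_{n}'$), and
\[
\Vert\underline{\xi}'-\underline{p}_{n}'/q_{n}'\Vert=\Vert g(\underline{\xi})-g(\underline{p}_{n}/q_{n})\Vert\le L\,\widetilde{\Psi}(q_{n})=\min_{1\le q'\le Mq_{n}}\Psi(q')\le\Psi(q_{n}'),
\]
using $q_{n}'\le Mq_{n}$ in the last step. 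Hence $\underline{\xi}'$ lies in $B(\underline{\eta},\varepsilon)\cap C$ and has the required property, so the set in question meets every ball around every point of $C$ and is dense.

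The main obstacle is exactly the denominator bookkeeping in the second step: one must know that passing a rational through the branch $g$ alters its reduced denominator only by a bounded factor in both directions, so that a well-approximable point is transported to a point that is well-approximable \emph{with respect to the same order of denominators}. This is immediate for affine rational-preserving IFS (which covers the application in Theorem~\ref{thm11}), but for a general rational-preserving, non-affine IFS the denominator of $g(\underline{p}/q)$ can blow up and the argument would require an additional controlled-growth hypothesis on the $f_{j}$. A minor secondary point — ruling out that $g$ collapses the infinitely many approximants onto finitely many values — is handled at once by the injectivity of $g$, which holds whenever the $f_{j}$ are one-to-one, as assumed in the applications.
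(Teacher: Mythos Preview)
Your argument is correct and follows essentially the same route as the paper: choose a deep cylinder map $g=\pi_{1}\circ\cdots\circ\pi_{k}$ so that $g(C)\subseteq B(\underline{\eta},\varepsilon)$, apply the hypothesis to a suitably modified target function, and push the resulting intrinsically well-approximable point through $g$. Your denominator bookkeeping via $\widetilde{\Psi}(q)=L^{-1}\min_{q'\le Mq}\Psi(q')$ is in fact cleaner than the paper's choice $\Phi(t)=\Psi(Nt)/N$ (which glosses over the possible non-monotonicity of $\Psi$), and you rightly flag that both arguments tacitly use the affine rational-preserving structure rather than the bare Cantor-set hypothesis stated in the lemma.
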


\begin{proof}[Proof of Lemma~\ref{kokomo}]
	Let 
	$\underline{\xi}\in C$ arbitrary with
	address $(\omega_{1},\omega_{2},\ldots)$ and write $\pi_{i}=f_{\omega_{i}}$ so that
	$\xi=\lim_{k\to\infty} \pi_{1}\circ \ldots \circ\pi_{k}(\underline{0})$. For given $\Psi$ and
	$\epsilon>0$, we construct
	a $\Psi$-approximable point $\underline{\xi}_{\epsilon}$ with $\Vert \underline{\xi}_{\epsilon}-\underline{\xi}\Vert <\epsilon$. For again $\tau\in(0,1)$ the largest absolute value of the contraction factors and $diam$ the diameter of $C$,
	take an integer $u$ large enough so that $\tau^{u} diam < \epsilon$. 
	Then consider the function $\Phi(t):= \Psi(Nt)\cdot N^{-1}$
	for a large integer $N$ dependent only on $u$ (and thus $\epsilon$) to be
	chosen later. By assumption there exists
	$\underline{\zeta}\in C$ that is $\Phi$-approximable.
	Define $\xi_{\epsilon}= \pi_{1}\circ \pi_{2}\circ\cdots \pi_{u}(\underline{\zeta})$.
	Then we have
	\[
	\Vert \underline{\xi}_{\epsilon}-\underline{\xi}\Vert \leq \tau^{u} diam< \epsilon, 
	\]
	as the addresses of $\underline{\xi}$ and $\underline{\xi}_{\epsilon}$ 
	coincide up to the $u$-th place. On the other hand, we claim that
	$\underline{\xi}_{\epsilon}$ is $\Psi$-approximable if $N$ was chosen large enough.
	To see this first notice that as $\pi_{j}$ are linear maps with rational
	coefficients, also $T:=\pi_{1}\circ \pi_{2}\circ\cdots \pi_{u}$
	induces a linear, rational transformation 
	$T(\underline{y})=(A\underline{y}+\underline{b})/s$,
	with $A\in\mathbb{Z}^{d\times d},\underline{b}\in\mathbb{Z}^{d}$
	and $s\in\mathbb{N}$.
	It is not hard to see that if some $\underline{y}\in\mathbb{R}$ is $h$-approximable for some function $h(t)$, 
	then $T(\underline{y})$ (with $T$ as above) is $Nh(t/N)$-approximable,
	for sufficiently large $N$ depending only on $s$. 
	This is readily derived from the facts that 
	$\Vert T(\underline{y})-T(\underline{p}/q)\Vert \leq \Vert \underline{y}-\underline{p}/q\Vert$ (since $T$ is a contraction)
	and $den(T(\underline{p}/q))\leq sq$, where $den$ denotes the common denomintor of a rational vector. 
	Application to $h(t)=\Phi(t)$ and $\underline{y}=\underline{\zeta}$ starting with some corresponding
	suitable value $N=N(\epsilon)$
	yields that $\underline{\xi}_{\epsilon}=T(\underline{\zeta})$ is $N\Phi(t/N)=\Psi(t)$-approximable.
\end{proof}

Now we can prove Theorem~\ref{thm11}.

	\begin{proof}[Proof of Theorem~\ref{thm11}]
	We recursively define a fast growing lacunary sequence $(a_{n})_{n\geq 1}$  of positive integers that will induce $\underline{\xi}\in C$
	satisfying \eqref{eq:missing11}, as carried out below. Assume for the moment this sequence is fixed.
	Write $b_{1}=a_{1}-1$ and $b_{l}=a_{l}-a_{l-1}-1$ for integers $l\geq 2$.
	Let $f=f_{1},g=f_{2}$ be two different 
	contractions in our IFS and without loss of generality assume
	$f$ is the map whose fixed point $\underline{\alpha}=\underline{\alpha}_{j}$ 
	is rational. 
	Let $\tau_{i}$ for 
	$i=1,2$ be the contraction factors of $f$ and $g$ respectively
	and let $\tau=\max_{i=1,2} \vert \tau_{i}\vert<1$.
	We define $\underline{\xi}$ 
	by
	\begin{equation} \label{eq:gx}
	\underline{\xi}:=
	\lim_{i\to\infty} f^{b_{1}}\circ g\circ f^{b_{2}}\circ g\circ \cdots
	\circ	f^{b_{i}}(\underline{0}).
	\end{equation}
	This means that the contraction is $g$ at the 
	$a_{i}$-th positions and $f$ otherwise.
	Assume we have 
	chosen $a_{1}<a_{2}<\ldots<a_{k}$ and want to construct the remaining sequence
	with the property stated in the theorem. 
	Take 
	\begin{equation} \label{eq:hophas}
	\underline{\theta}_{k}:=
	f^{b_{1}}\circ g\circ f^{b_{2}}\circ g\circ \cdots \circ f^{b_{k}}\circ g 
	\circ	f^{\infty}(\underline{0}).
	\end{equation} 
	This vector clearly lies in $C$. Then clearly the suffix vector $f^{\infty}(\underline{0})$ 
	of $\underline{\theta}_{k}$ is the fixed point of $f$, so that 
	$f^{\infty}(\underline{0})=f(\underline{\alpha})=\underline{\alpha}$ and recall $\underline{\alpha}$
	is assumed to be rational. Since both $f,g$ are rational-preserving, we see that 
	$\underline{\theta}_{k}=f^{b_{1}}\circ g\circ f^{b_{2}}\circ g\circ \cdots \circ f^{b_{k}}\circ g(\underline{\alpha})$ is also a rational vector (if
	the IFS is affine then with denominator
	$\ll N^{da_{k}}$ 
	if $N$ is the largest denominator among the contraction
	factors, but we will not need this). 
	Write $\underline{\theta}_{k}= \underline{p}_{k}/q_{k}=(p_{k,1},\ldots, p_{k,d})/q_{k}$	in lowest terms, with $q_{k}\ll N^{da_{k}}$.
	Next, we claim that
	\begin{equation} \label{eq:gg1}
	\Vert \underline{p}_{k}/q_{k}-\underline{\xi} \Vert \leq diam \cdot \tau^{a_{k+1}}
	\end{equation}
	where $diam$ denotes the diameter of our compact Cantor set $C$.
	Observe
	\[
	\Vert \underline{p}_{k}/q_{k}-\underline{\xi} \Vert = 
	\Vert f^{b_{1}}\circ g\circ  \cdots \circ g\circ f^{b_{k+1}}(\underline{\alpha})- 
	f^{b_{1}}\circ g\circ  \cdots \circ g\circ f^{b_{k+1}}(\underline{\beta})  \Vert
	\leq \tau^{a_{k+1}}\Vert \underline{\alpha}-\underline{\beta}\Vert
	\]
	for
	\[
	\underline{\beta}= 
	\lim_{L\to\infty} f^{b_{k+1}}\circ g\circ \cdots \circ f^{b_{L}}(\underline{0})
	\]
	some well-defined limit
	in $C$, since $f,g$ are
	contractions with factor at most $\tau$ and there are a total
	of $a_{k+1}$ contractions applied. It then suffices to notice that 
	$\Vert \underline{\xi}-\underline{\beta}\Vert \leq diam$ as they both belong to $C$, to derive \eqref{eq:gg1}.
	From \eqref{eq:gg1} we see that $\Vert \underline{\xi}-\underline{p}_{k}/q_{k}\Vert$ can
	be made arbitrarily small when taking $a_{k+1}$ large enough, which
	is all we need for the sequel.
	
	Since $\tau<1$ by definition, we may 
	assume $a_{k+1}$ is large enough that
	$p_{k,i}/q_{k}$ is
	a convergent (not necessarily in lowest terms!) to $\xi_{i}$ for
	any $1\leq i\leq d$. 
	Let $s_{k,i}/t_{k,i}$ be the preceding convergent and 
	$y_{k,i}/z_{k,i}$ be the convergent following
	$p_{k,i}/q_{k}$, for $1\leq i\leq d$. We have to justify that
	this is well-defined, in the sense that  $s_{k,i}/t_{k,i}$
	and $y_{k,i}/z_{k,i}$
	do only depend on $a_{k+1}$ but
	not on the exact choices of $a_{k+2}, a_{k+3},\ldots$,
	for any given sufficiently large $a_{k+1}$ and much larger $a_{k+2}$.
	It is clear for the preceding convergent $s_{k,i}/t_{k,i}$.
	For $y_{k,i}/z_{k,i}$, notice that 
	when we choose $a_{k+2}$ much larger than $a_{k+1}$ as well then we
	can guarantee that $p_{k+1,i}/q_{k+1}$ is a convergent to $\xi_{i}$ 
	as well, by the same argument as above. 
	Thereby we can reconstruct the convergents of $\xi_{i}$ up to $\theta_{k+1,i}=p_{k+1,i}/q_{k+1}$. 
	Now $z_{k,i}\leq q_{k+1}$ by definition of $y_{k,i}/z_{k,i}$ as the subsequent convergent
	of $p_{k,i}/q_{k}$. Hence indeed
	once we have chosen large $a_{k+1}$ and assume $a_{k+2}$
	exceeds $a_{k+1}$ by a large amount, everything is well-defined. 
	Since by Lemma~\ref{lele} 
	\[
	z_{k,i}\geq \frac{1}{2q_{k}\vert \xi_{i}-\frac{p_{k,i}}{q_{k}}\vert}, 
	\]
	and $q_{k}$ is fixed, we infer from \eqref{eq:gg1}
	that we can make $z_{k,i}$ arbitrarily large
	by choosing $a_{k+1}$ accordingly large (note that for this argument
	we do not require $p_{k,i}/q_{k}$ to be in lowest terms, the same applies
	to any argument below).
	
	By Lemma~\ref{lele}
	we know that
	\[
	\vert \xi_{i}-\frac{s_{k,i}}{t_{k,i}}\vert \geq \frac{1}{2t_{k,i}q_{k}}, \qquad
	\vert \xi_{i}-\frac{p_{k,i}}{q_{k}}\vert \geq \frac{1}{2z_{k,i}q_{k}},
	\]
	and for any ratinal number $r_{i}/s\neq p_{k,i}/q_{k}$ with $0<s<z_{k,i}/2$ we have
	\[
	\vert s\xi_{i}-r_{i}\vert \geq \frac{1}{2q_{k}}. 
	\]
	So let $Q_{k}=z_{k,i}/3$. Thus for any $r_{i}/s$ as above
	\begin{equation} \label{eq:tisch}
	\vert \xi_{i}-\frac{r_{i}}{s}\vert \geq  \frac{1}{2sq_{k}}\geq  \frac{1}{2q_{k}} \cdot \frac{1}{z_{k,i}}. 
	\end{equation}
	Recall $q_{k}$ is fixed so $1/(2q_{k})$ 
	is a constant as well. On the other hand 
	\begin{equation} \label{eq:bett}
	\frac{\Phi(Q_{k})}{Q_{k}}=3\cdot \frac{\Phi(z_{k}/3)}{z_{k,i}}.
	\end{equation}
	Now by assumption $\Phi$ tends to $0$, and we have observed above
	that we may choose $a_{k+1}$ and consequently
	$Q_{k}$ arbitrarily large. 
	Thus we may choose $a_{k+1}$ large enough so that $\Phi(Q_{k})< 1/(6q_{k})$,
	and \eqref{eq:tisch} and \eqref{eq:bett} imply
	\[
	\vert \xi_{i}-\frac{r_{i}}{s}\vert > \frac{\Phi(Q_{k})}{Q_{k}}
	\]
	for any $r_{i}/s\neq p_{k,i}/q_{k}=\theta_{k,i}$ with $s<Q_{k}$. 
	Since this holds for any $1\leq i\leq d$ and
	$\underline{\theta}_{k}\in C$, this means
	that any $\underline{r}/s\notin C$ with $s<Q_{k}$ satisfies 
	\[
	\Vert \underline{\xi}-\underline{r}/s \Vert 
	\geq \vert \xi_{i}-r_{i}/s\vert >  \frac{\Phi(Q_{k})}{Q_{k}}.
	\]
	The desired property holds for $Q=Q_{k}$.
	Repeating this process, 
	we get a sequence of values $(Q_{n})_{n\geq 1}\to\infty$ with
	property \eqref{eq:missing11} for $\underline{\xi}$ defined in \eqref{eq:gx}. 
	
	To finish the proof, we show that all vectors 
	$\underline{\xi}=(\xi_{1},\ldots,\xi_{d})$ derived from our 
	construction are irrational, and that we can
	construct uncountably
	many distinct ones among them. The density is then also
	implied by Lemma~\ref{kokomo}. Clearly the
	the method is flexible enough to provide uncountably
	many formal elements $f^{b_{1}}\circ g\circ f^{b_{2}}\circ g\circ \cdots$
	with the given property. If we assume any element has
	at most countably many addresses, we are done. If not, by hypothesis
	we may assume $f,g$ are one-to-one.
	This case requires more work.
	We first show that any $\underline{\xi}$ constructed above is 
	irrational.
	Observe that by the fast growth of the $a_{i}$, we may assume
	that the approximating rational
	vectors
	$\underline{\theta}_{k}=\underline{p}_{k}/q_{k}$ 
	to $\underline{\xi}$ satisfy
	\begin{equation} \label{eq:ivo}
	\Vert \underline{\xi}-\underline{p}_{k}/q_{k}\Vert < \frac{1}{2q_{k}^{3}}.
	\end{equation}
	In particular any $\theta_{k,i}=p_{k,i}/q_{k}$ is
	a convergent to $\xi_{i}$ (not necessarily in lowest terms).
	Note that the denominators $q_{k}$ tend to infinity,
	since we divide by
	some integer in each contraction step.
	If some coordinate $\xi_{i}=r_{i}/s_{i}$ were rational, then clearly any rational number $p_{k,i}/q_{k}$
	has distance $\geq (s_{i}q_{k})^{-1}\gg q_{k}^{-1}$ unless $\xi_{i}=p_{k,i}/q_{k}$. 
	Hence, in view of \eqref{eq:ivo}, if $\xi_{i}$ was rational,
	then we must have $\xi_{i}=p_{k,i}/q_{k}$ for all large $k$. In particular
	$\theta_{k,i}= \theta_{k+1,i}$ for large $k$. Hence, if all coordinates
	$\xi_{i}$ are rational, by increasing $k$ if necessary we have the identity 
	for all coordinates $i=1,2,\ldots,d$, that is $\underline{\theta}_{k}=\underline{\theta}_{k+1}=\underline{\xi}$ 
	for any large $k$. 
	Since the addresses of $\underline{\theta}_{k}, \underline{\theta}_{k+1}$ 
	share the same prefix up to $f^{b_{k}}\circ g$ and $f,g$ are one-to-one by assumption, applying
	inverse functions repeatedly we infer an identity
	\begin{equation} \label{eq:fixes}
	f^{b_{k+1}}\circ g(\underline{\alpha})= \underline{\alpha}, \qquad \qquad \underline{\alpha}=f^{\infty}(\underline{0}).
	\end{equation}
	Recall $f$ fixes $\underline{\alpha}$. In fact,
	$\underline{y}=\underline{\alpha}$ is the only solution to $f(\underline{y})=\underline{\alpha}$ since $f$ is one-to-one.
	Now note that if all contractions of the IFS fix the same element $\underline{\alpha}\in\mathbb{Q}^{d}$,
	it is easy to check that the Cantor set collapses to a single point
	$C=\{\underline{\alpha}\}$, but then $\underline{\alpha}$ 
	has uncountably many addresses, which we have dealt with before. 
	Thus we may assume
	$g$ does not fix $\underline{\alpha}$, i.e. $g(\underline{\alpha})\neq \underline{\alpha}$. Then
	by the above observation that the preimage $\underline{\alpha}$
	under $f$ is only $\underline{\alpha}$, we see
	that \eqref{eq:fixes} cannot hold for any choice 
	of $b_{k+1}$.
	Thus we have confirmed that any $\underline{\xi}$ in our construction is irrational.
		Lemma~\ref{kokomo} and its proof show that 
	by taking finite forward orbit $\pi_{1}\circ\cdots \circ \pi_{m}(\underline{\xi})$ 
	of some $\underline{\xi}$ under the IFS essentially
	preserves the property, so we obtain
	a countably infinite set of suitable $\underline{\xi}$ that is dense in $C$.
	We only sketch the proof of why the set is actually uncountable and omit
	rigorous calculations.
	Assume $\underline{\xi}^{1}, \underline{\xi}^{2}$ are 
	two $\underline{\xi}$ as in \eqref{eq:gx} arising from sequences
	$(a_{n}^{1})_{n\geq 1}$ and $(a_{n}^{2})_{n\geq 1}$, 
	respectively. We claim that if they are ordered
	$a_{1}^{1}< a_{1}^{2} <a_{2}^{1} < a_{2}^{2}< a_{3}^{1}<\cdots$ with very large gaps
	between two consecutive elements, then $\underline{\xi}^{1}\neq \underline{\xi}^{2}$. If true
	this clearly
	implies we get an uncountable family of suitable $\underline{\xi}$. 
	The claim is obvious if there is some coordinate $i$ for which $\xi_{i}^{1}$
	is rational and $\xi_{i}^{2}$ is irrational, or vice versa. Thus, as we
	have shown above that both
	vectors $\underline{\xi}^{1}, \underline{\xi}^{2}$ are irrational,
	we may assume there is an index $i$ with both 
	 $\xi_{i}^{1},\xi_{i}^{2}$ irrational. 
	To show the claim,
	observe that the respective rational approximations 
	$\theta_{k,i}^{j}=p_{k,i}^{j}/q_{k,i}^{j}$ as in \eqref{eq:hophas}
	are again very good approximating convergents to $\xi_{i}^{j}$. 
	If we choose the gaps between two consecutive elements of
	$a_{1}^{1},a_{1}^{2},a_{2}^{1},a_{2}^{2},\ldots$ 
	sufficiently large in each step,
	then we will have $q_{k,i}^{1 \prime}<q_{k,i}^{2 \prime}<q_{k+1,i}^{1 \prime}$ 
	for any $k\geq 1$,
	where $p_{k,i}^{j \prime}/q_{k}^{j \prime}$ denotes $p_{k,i}^{j}/q_{k}^{j}$ written in lowest terms.
	Here we use that the denominators
	of convergents grow fast when the approximation is good, a well-known
	fact from the theory of continued fractions, see \eqref{eq:formu}.
	In fact, with a proper choice the 
	convergent $y_{k,i}^{2}/z_{k,i}^{2}$ of $\underline{\xi}^{2}$
	following $p_{k-1,i}^{2}/q_{k-1}^{2}$ will still have larger denominator
	than $q_{k,i}^{1}$. Hence $p_{k,i}^{1}/q_{k,i}^{1}$ is a convergent 
	to $\xi_{i}^{1}$ but no convergent
	to $\xi_{i}^{2}$, thus the continued fraction expansion 
	of $\xi_{i}^{1}$ and
	$\xi_{i}^{2}$ are not the same, consequently
	$\xi_{i}^{1}\neq \xi_{i}^{2}$ and $\underline{\xi}^{1}\neq \underline{\xi}^{2}$.   
\end{proof}

We turn towards the proof of  Theorem~\ref{bfr}.
For convenience, we
will use the framework of parametric geometry of
numbers introduced in~\cite{ss}, essentially
a parametric logarithmic version of Minkowski's Second Lattice Point
Theorem. We only need the special case of approximation to a 
single number, which corresponds to $n=2$ in the notation 
of~\cite{ss}. This means essentially we use a parametric, logarithmic version
of formula \eqref{eq:diesegl} above. 
Concretely, for given $\xi\in\mathbb{R}$, similar as for the proof
of Lemma~\ref{lele} consider the lattice $\Lambda_{\xi}$ and the family of convex
bodies $K(T)$ parametrized by $T\geq 1$ of the form
\begin{equation} \label{eq:1dimfall}
\Lambda_{\xi}= \{ (m,m\xi-n)\in\mathbb{R}^{2}: m,n\in\mathbb{Z}\}, \qquad
K(T)= [-T,T]\times [-T^{-1}, T^{-1}].
\end{equation} 
Observe that a point in $\Lambda_{\xi}\cap K(T)$ corresponds to the 
system of inequalities
\[
\vert m\vert \leq T, \qquad \vert m\xi-n\vert \leq T^{-1}.
\]
Denote by $\lambda_{1,\xi}(T), \lambda_{2,\xi}(T)$ the successive minima
of $K(T)$ with respect to the lattice $\Lambda_{\xi}$, that is 
$\lambda_{i,\xi}$ is the minimum
value for which $\lambda_{i,\xi}(T)K(T)$ contains $i$
linearly independent points of $\Lambda_{\xi}$, for $i\in\{1,2\}$.
According to Minkowski's Second Convex Body Theorem
\eqref{eq:diesegl}, in view of $\rm{vol}(K(T))=4$ and $\det \Lambda_{\xi}=1$,
we infer
\[
\frac{1}{2}=2 \frac{\det \Lambda_{\xi}}{\rm{vol}\; K(T)}\leq
\lambda_{1,\xi}(T)\lambda_{2,\xi}(T)\leq 
4\frac{\det \Lambda_{\xi}}{\rm{vol}\; K(T)}=1.
\]
If we let $t=\log T$ and $L_{i,\xi}(t)= \log \lambda_{i,\xi}(T)$ for
$i\in\{1,2\}$, then we obtain
\begin{equation} \label{eq:minko}
- \log 2 \leq L_{1,\xi}(t)+L_{2,\xi}(t)\leq 0, \qquad\qquad t\geq 0.
\end{equation}
The functions $L_{i,\xi}(t)$ are piecewise linear with slopes among
$\{-1,1\}$. More precisely, if $(m_{i},n_{i})\in\mathbb{Z}^{2}$ with $m_{i}>0$
are the vectors
realizing the $i$-th minimum at given position $t=\log T$
for $i\in\{1,2\}$, then
\[
L_{i,\xi}(t)=L_{(m_{i},n_{i})}(t), \qquad i\in\{ 1,2\},
\]
where for $(m,n)\in\mathbb{Z}^{2}$ we denoted
\begin{equation} \label{eq:yy}
L_{(m,n)}(t):= \max \{ \log m-t, 
\log \vert m\xi-n\vert + t\}.
\end{equation}
In particular $L_{(m,n)}(t)$ has its minimum at the point position
$(t_{0},L_{(m,n)}(t_{0}))$ with
$t=t_{0}= (\log m-\log \vert m\xi-n\vert)/2$ where the expressions in the
maximum coincide. 
This setup will suffice for our purpose to prove Theorem~\ref{bfr},
we refer to~\cite{ss} for parametric geometry of numbers with
respect to simultaneous rational approximation to 
$\xi_{1},\ldots,\xi_{d}$.

\begin{proof}[Proof of Theorem~\ref{bfr}]
	First consider the special case $C=C_{b,W}$. 
	Fix any
	$p/q\in \mathbb{Q}\setminus C$. Let $Q=2bq$.
	Let further $\xi\in C\setminus \mathbb{Q}$
	be given. Note we are in the situation of Theorem~\ref{known}.	
	Application of Theorem~\ref{known} to $Q\geq 2b\geq 1$ yields that
	the system
	\[
	1\leq m\leq b^{Q^{\Delta}},\qquad \quad
	\vert m\xi-n\vert\leq \frac{b}{Q}
	\]
	has a solution in positive integers $m,n$ such that 
	$\frac{n}{m}\in\mathbb{Q}\cap C$. 
	Write $\vert m\xi-n\vert= \sigma/Q$ 
	with $\sigma\in[0,b]$. Since $\xi$
	is irrational in fact $\sigma>0$.
	Transition to logarithmic scale yields that the 
	minimum of the function $L_{(m,n)}(t)$ is attained for 
	\[
	t=t_{0}:= \frac{\log m+\log Q-\log \sigma}{2},
	\]
	and at this position we have
	\[
	L_{(m,n)}(t_{0})= \log m-t_{0}=\frac{\log m-\log Q+\log \sigma}{2}.
	\]
	Clearly $(p,q)\in\mathbb{Z}^{2}$ we started with
	is linearly independent to $(m,n)\in\mathbb{Z}^{2}$, since this just rephrases
	as $\frac{m}{n}\neq \frac{p}{q}$, which is true since $\frac{m}{n}\in C$
	but $\frac{p}{q}\notin C$.
	Hence by \eqref{eq:minko}
	we have
	\[
	L_{(p,q)}(t_{0})\geq \frac{\log Q-\log m-\log \sigma}{2}-\log 2,
	\]
	as the reverse estimate would imply 
	$L_{1,\xi}(t_{0})+L_{2,\xi}(t_{0})\leq L_{(p,q)}(t_{0})+L_{(m,n)}(t_{0})<-\log 2$, contradicting \eqref{eq:minko}. In view of the definition
	of $L_{(p,q)}$ in \eqref{eq:yy},
	equivalently at least one of the inequalities 
	\[
	\log q-t_{0} \geq \frac{\log Q-\log m-\log \sigma}{2}-\log 2
	\]
	and
	\[
	\log \vert q\xi-p\vert +t_{0} \geq \frac{\log Q-\log m-\log \sigma}{2}-\log 2
	\]
	holds. Thus, if the first estimate is violated, which just becomes
	\begin{equation} \label{eq:gsd}
	q\leq \frac{Q}{2\sigma},
	\end{equation}
	then the second one is correct which yields
	\[
	\vert q\xi-p\vert \geq \frac{1}{2m}.
	\]
	However, \eqref{eq:gsd} is satisfied
	since 
	$q= \frac{Q}{2b}\leq \frac{Q}{2\sigma}$, 
	therefore we conclude 
	\begin{equation} \label{eq:tadaa}
	\vert q\xi-p\vert \geq \frac{1}{2m}\geq \frac{b^{-Q^{\Delta}}}{2}
	= \frac{b^{-(2b)^{\Delta}q^{\Delta}}}{2}.
	\end{equation}
	Notice that the bound 
	is independent from the choice of $\xi\in C\setminus \mathbb{Q}$. Assume
	the distance $d(C,p/q)$ for some $p/q\in \mathbb{Q}\setminus C$ was smaller
	than the right hand side of \eqref{eq:tadaa} divided by $q$.
	Then by compactness
	of $C$ there is $\xi_{0}\in C$ realizing this distance 
	$d(C,p/q)=d(\xi_{0},p/q)$.
	Clearly $\xi_{0}\notin C\setminus \mathbb{Q}$, since for these numbers we have 
	\eqref{eq:tadaa}. However, since $C\setminus \mathbb{Q}$ 
	is dense in $C$, we
	get a contradiction to \eqref{eq:tadaa} anyway by choosing 
	$\xi\in C\setminus \mathbb{Q}$ 
	sufficiently close to $\xi_{0}$. Hence the right hand side in \eqref{eq:tadaa}
	divided by $q$ is a lower bound
	for $d(C,p/q)$, which means \eqref{eq:null} is true. 
	Since $p/q\in \mathbb{Q}\setminus C$ and
	$\xi\in C\setminus \mathbb{Q}$ were chosen arbitrary,
	we readily infer \eqref{eq:0} from \eqref{eq:tadaa} as well.
	This finishes the proof of the special case $C=C_{b,W}$.
	
	For the general case of any monic, rationally generated Cantor set $C$ with 
	open set condition, by
	Theorem~\ref{argell} for any $\xi\in C\setminus \mathbb{Q}$ the estimates
	\[
	1\leq m\leq e^{Q^{\Delta}}, \qquad \vert \xi-\frac{m}{n}\vert\leq \frac{K}{Q} 
	\]
	admit a solution $m/n\in C$ for any large $Q$
	and some constant $K$. For given $p/q\in \mathbb{Q}\setminus C$, 
	we proceed as above for $Q=2Kq$ to obtain
	\[
	\vert q\xi-p\vert \geq \frac{1}{2m}\geq \frac{e^{-Q^{\Delta}}}{2}
	=\frac{e^{-(2K)^{\Delta}q^{\Delta}}}{2},
	\]
	and the claim follows again by choosing $\rho>(2K)^{\Delta}$ 
	sufficiently large to guarantee
	\[
	e^{-\rho q^{\Delta}}< \frac{e^{-(2K)^{\Delta}q^{\Delta}}}{q}
	\] 
	simultaneously for any $q\geq 1$.
\end{proof}

A similar strategy is applied for the proof 
of Theorem~\ref{expon} below,
indeed the right claim of \eqref{eq:e1} rewritten as $\lambda_{ext}(\xi)\leq 1/\widehat{\lambda}_{int}(\xi)$ resembles the assertion of
Theorem~\ref{bfr}.

\subsection{Proofs from Section~\ref{4}}

We start this section with the proof of the counting results.
For this employ Proposition~\ref{ppr} above.

\begin{proof}[Proof of Theorem~\ref{yzok}]
	Let $l\geq 0$ be an integer.
	Assume there are more than $J^{l}$ distinct rational vectors 
	in $C$ each with common denominator at most $N$.
	Then by Proposition~\ref{ppr}
	there are two vectors $\underline{\alpha}, \underline{\alpha}^{\prime}$ 
	among them that differ
	by at most $\Vert \underline{\alpha}-\underline{\alpha}^{\prime}\Vert\leq diam \cdot\tau^{l}$. On the other hand, if we write
	$\underline{\alpha}^{\prime}=\underline{p}/q, 
	\underline{\alpha}^{\prime}=\underline{r}/s$, then
	\begin{equation}  \label{eq:wird}
	\Vert \underline{\alpha}-\underline{\alpha}^{\prime}\Vert = \Vert \underline{p}/q-\underline{r}/s\Vert
	\geq \frac{1}{qs}\geq N^{-2}.
	\end{equation}
	We conclude $N\geq diam^{-1/2}\tau^{-l/2}$. In other words, if $N<diam^{-1/2}\tau^{-l/2}$
	then there are at most $J^{l}=\tau^{-Dl}$ rational vectors with common
	denominator at most $N$. Now for given $N$, let $l$ be the unique integer
	with $diam^{-1/2}\tau^{-l/2+1}\leq N<diam^{-1/2}\tau^{-l/2}$. Then by the above argument
	there are at most
	$J^{l}=\tau^{-Dl}<diam^{D}(N/\tau)^{2D}=J^{2}diam^{D}\cdot N^{2D}$ 
	rational vectors with common denominator at
	most $N$. 
	\end{proof}

The estimate extends to bound the cardinality of
vectors $(r_{1}/s_{1},\ldots,r_{d}/s_{d})$ with each denominator 
$\max_{1\leq j\leq d} s_{j}\leq N$.
The proof of Theorem~\ref{yok} employs Liouville's inequality.

\begin{proof}[Proof of Theorem~\ref{yok}]
Let $\underline{\alpha}=(\alpha_{1},\ldots,\alpha_{d}),\underline{\alpha}^{\prime}=
(\alpha_{1}^{\prime},\ldots,\alpha_{d}^{\prime})$ be distinct algebraic vectors 
with entries of degree at most $n$ and heights $H(\underline{\alpha})=\max H(\alpha_{j}),
	H(\underline{\alpha}^{\prime})=\max H(\alpha_{j}^{\prime})$ at most $N$, respectively. Since the
	vectors are distinct there is an index $j$ 
	with $\alpha_{j}\neq \alpha_{j}^{\prime}$.
	Then
	Liouville's inequality~\cite[Theorem~A.1, Corollary~A.2]{bugbuch} yields
	\[
	\Vert \underline{\alpha}-\underline{\alpha}^{\prime}\Vert\geq 
	\vert \alpha_{j}-\alpha^{\prime}_{j}\vert \gg_{n} H(\alpha_{j})^{-n}H(\alpha_{j}^{\prime})^{-n}\geq
	N^{-2n}. 
	\]
	Using this estimate in place of \eqref{eq:wird}, the claim 
	follows very similarly as Theorem~\ref{yzok}. 
\end{proof}

Next we show
Theorem~\ref{pbasic}.

\begin{proof}[Proof of Theorem~\ref{pbasic}]
	Let $(\omega_{1},\omega_{2},\ldots)$ be an address of $\underline{\xi}\in C$ and $\pi_{i}=f_{\omega_{i}}$, so that 
	$\underline{\xi}=\lim_{n\to\infty} \pi_{1}\circ \pi_{2}\cdots\circ \pi_{n}(\underline{0})$.
	Assume $(\omega_{1},\ldots,\omega_{l})$ is 
	the (possibly empty) preperiod and $(\omega_{l+1},\ldots,\omega_{u})^{\infty}$
	is the successive period. Let
	$\underline{\zeta}:=(\pi_{l+1}\circ \cdots \circ \pi_{u})^{\infty}(\underline{0})$ so that
	$\underline{\xi}=\pi_{1}\circ \cdots \circ \pi_{l}(\underline{\zeta})$.
	By construction $\underline{\zeta}=\pi_{l+1}\circ \cdots \circ \pi_{u}(\underline{\zeta})$. For simplicity write $C_{j}=A_{\omega_{j}}/q_{\omega_{j}}\in \mathbb{Q}^{d\times d}$	and $\underline{c}_{j}=\underline{b}_{\omega_{j}}/s_{\omega_{j}}\in\mathbb{Q}^{d}$, with $A_{j}, \underline{b}_{j}$ as in Definition~\ref{4def}, so that
	$\pi_{j}(\underline{y})= C_{j}\underline{y}+\underline{c}_{j}$.
	Therefore from the concatenation we obtain an identity $C_{l+1}C_{l+2}\cdots C_{u}\underline{\zeta}+\underline{c}^{\prime}= \underline{\zeta}$, for some
	$\underline{c}^{\prime}\in \mathbb{Q}^{d}$.
	Since $C_{j}$ induce contractions so does any product, so $1$ is not an eigenvalue
	and we see $\underline{\zeta}$ is given as $\underline{\zeta}=-(C_{u}C_{u-1}\cdots C_{l}-I_{d})^{-1}\underline{c}^{\prime}$. Hence $\underline{\zeta}\in\mathbb{Q}^{d}$ as $C_{j}\in\mathbb{Q}^{d\times d}$ 
	and $\underline{c}^{\prime}\in \mathbb{Q}^{d}$. Thus also $\underline{\xi}=\pi_{1}\circ \cdots \circ \pi_{l}(\underline{\zeta})$ is 
	rational since $\pi_{j}\in F$ are rational-preserving. 
	
	Conversely, assume the IFS is unimodular and take arbitrary 
	$\underline{\xi}=\underline{p}/q\in \mathbb{Q}^{d}\cap C$. 
	Again assume 
	$\pi_{1}\circ \pi_{2} \cdots(\underline{0})$ is any formal
	representation of $\underline{\xi}$. 
	Consider the sequence
	$\underline{a}_{0}=\underline{\xi},\underline{a}_{1}=\pi_{1}^{-1}(\underline{\xi}), \underline{a}_{2}=\pi_{2}^{-1}\circ \pi_{1}^{-1}(\underline{\xi}), \ldots$, which
	is well-defined as clearly $\pi_{j}$ are bijective if the IFS is unimodular. 
	Then  obviously $\underline{a}_{n}\in \mathbb{Q}^{d}\cap C$ for any $n\geq 0$. 
	Moreover, when building the inverses to derive $\underline{a}_{n+1}$ from
	$\underline{a}_{n}$, it follows from the rational IFS being unimodular that the 
	denominators that occur are divisors of $qS$ with $S:=\prod_{1\leq j\leq J} s_{j}$. Here we use that 
	when building the inverses $\pi_{j}^{-1}$ with Cramer's rule
	we do not get additional factors in the denominator 
	by unimodularity of the matrices, and the shift 
	vectors $\underline{b}_{j}/s_{j}$ can
	possible only cause a factor that divides $S$. 
	Since $C$ is compact with some finite 
	diameter $diam>0$, there are at most
	$(qS\cdot diam+1)^{d}\ll_{C} q^{d}$ rational vectors in $C$
	with this property. 
	Moreover, since $\underline{a}_{n}\in \mathbb{Q}^{d}\cap C$ with 
	denominator $\ll_{C} q$
	we infer from Theorem~\ref{yzok} and its proof
	that there are at most $\ll_{C} q^{D}$ such 
	vectors (we may remove the factor $2$ in the exponent since all denominators
	divide $q$, see the proof of Theorem~\ref{yzok}). 
	Hence the number is $\ll_{C} \min\{ q^{D},q^{d}\}$. 
	 By this finiteness, some rational vector
	must occur twice, that is $\underline{a}_{i}=\underline{a}_{j}$ for 
	some $i<j\ll_{C} \min\{ q^{D},q^{d}\}$. 
	This means
	$\underline{a}_{j}=\pi_{i+1}\circ \pi_{i+2}\circ \ldots \circ \pi_{j}(\underline{a}_{i})= \underline{a}_{i}$. 
	Hence
	\begin{eqnarray*}
		\underline{\xi}&= \pi_{1}\circ \cdots \circ \pi_{i}(\underline{a}_{i})=
		\pi_{1}\circ \cdots \circ \pi_{i}\circ (id)^{\infty}(\underline{a}_{i})\\
		&=\pi_{1}\circ \cdots \circ \pi_{i} 
		\circ (\pi_{i+1}\circ \pi_{i+2}\circ \ldots \circ \pi_{j})^{\infty}(\underline{a}_{i}) 
		\\ &=
		\pi_{1}\circ \cdots \circ \pi_{i} 
		\circ (\pi_{i+1}\circ \pi_{i+2}\circ \ldots \circ \pi_{j})^{\infty}(\underline{0}).
	\end{eqnarray*}
	In the last identity we used that $\pi_{i}\in F$ are contractions. Thus 
	$(\omega_{1},\ldots ,\omega_{i})$ composed with
	$(\omega_{i+1},\omega_{i+2},\ldots,\omega_{j})^{\infty}$ is a
	periodic address of $\underline{\xi}$.
\end{proof}

The estimate on rationals in $C$ whose denominator divides $q$ we used is 
potentially very
crude, we believe in fact $\log q$ should be the correct order of period lengths.

 \begin{proof}[Proof of Corollary~\ref{sophieg}]
 	For the first claim let $q_{i}$ be as in the theorem
 	and $Q=q_{1}q_{2}\cdots q_{v}$ their product.
 	We notice that for a given prime $q\nmid b$
 	it follows directly from~\cite[Lemma~3]{blo}
 	that $ord_{b}\bmod q^{n}= mq^{n-O(1)}\gg q^{n}$ for some $m$ diving
 	$q-1$ and all $n$.
 	Thus, since for $s=q_{1}^{\alpha_{1}}q_{2}^{\alpha_{2}}\cdots q_{v}^{\alpha_{v}}$ we have that $ord_{b}\bmod s$ is the lowest common multiple 
 	of the orders $ord_{b}\bmod q_{i}^{\alpha_{i}}$ over $1\leq i\leq v$ and
 	$q_{i}$ are distinct primes not dividing $b$,
 	we see that $ord_{b}\bmod s$ is divisible by $s/Q^{O(1)}$ 
 	and thus $ord_{b}\bmod s\gg s$. 
 	On the other hand, according to Proposition~\ref{pr} the period
 	length of $r/s\in C_{b,W}$ equals $ord_{b} \bmod s$, which
 	is $\ll_{C} s^{D}=s^{\Delta}$ by Theorem~\ref{pbasic}. 
 	For large $s$
 	we get a contradiction as $\Delta<1$. 
 	For the second claim,
 	again since the period length of $p/q$ is $ord_{b} \bmod q$, it divides
 	$\varphi(q)=q-1$. Since $(q-1)/2$ is prime the only possible divisors
 	are $\{1,2,(q-1)/2,q-1\}$. On the other hand, by Theorem~\ref{pbasic} the
 	period length is $\ll q^{D}=q^{\Delta}$. Since $\Delta<1$ for large $q$ the divisors $(q-1)/2,q-1$ can be ruled out,
 	so $q\vert b^{2}-1$ which only gives finitely many values of $q$ as well. 
 	In both claims, since $C_{b,W}\subseteq [0,1]$ this yields only finitely many 
 	potential rational numbers of the stated form.  
 	\end{proof}

As indicated above, the proof of Theorem~\ref{expon} is similar to the one of Theorem~\ref{bfr}.

\begin{proof}[Proof of Theorem~\ref{expon}]
	As in Lemma~\ref{lele} let $u_{t}/v_{t}$ be the convergents of $\xi\in S$. We only
	show the left inequality of \eqref{eq:e1}, the right is proved analogously.
	We may assume $\lambda_{int}(\xi)>1$ as otherwise the claim is trivial.
	For simplicity write $a= \lambda_{int}(\xi)$. Let $\epsilon>0$.
	Recall from the theory of continued fractions
	that any fraction
	$p/q$ that is no convergent to $\xi$ satisfies $\vert \xi-p/q\vert>(2q)^{-1}$.
	Thus, our assumption implies that
	there exist arbitrarily large indices $l$ so that the
	convergent $u_{l}/v_{l}$ lies in $S$ and has the property
	\begin{equation}  \label{eq:cofr0}
	v_{l}^{-a-\epsilon} \leq \vert v_{l}\xi-u_{l}\vert \leq v_{l}^{-a+\epsilon}.	
	\end{equation}
	By Lemma~\ref{lele} we have
	$v_{l+1}\asymp \vert v_{l}\xi-u_{l}\vert^{-1}$
	so for large $l$ we infer
	\begin{equation} \label{eq:cofr}
	v_{l}^{a-\epsilon} \ll v_{l+1}\ll v_{l}^{a+\epsilon}.
	\end{equation}
	Moreover again Lemma~\ref{lele} shows that for any $r/s\neq u_{l}/v_{l}$ 
	with $s<v_{l+1}/2$ we have
	$\vert s\xi-r\vert\geq v_{l}^{-1}/2$. In particular this estimate holds for any
	$r/s\notin S$ with $s<v_{l+1}/2$. Thus for $Q_{l}:=v_{l+1}/3$ 
	and any such $r/s\notin S$ with $s<Q$ we have
	\[
	\vert s\xi-r\vert \geq \frac{1}{2}v_{l}^{-1}.
	\]  
	On the other hand from \eqref{eq:cofr} we know
	$v_{l}\gg v_{l+1}^{1/(a+\epsilon)}\gg Q_{l}^{1/(a+\epsilon)}$. Thus the left hand side
	is $\gg Q_{l}^{-1/(a+\epsilon)}$ for any $r/s\notin S$ with $s<Q$.
	The left inequality in \eqref{eq:e1} follows since clearly $Q_{l}\to \infty$
	as $l\to\infty$ and	$\epsilon$ can be
	taken arbitrarily small. As indicated above, an analogous argument 
	starting with $a= \lambda_{ext}(\xi)$ yields the right estimate in \eqref{eq:e1}.
	
	In the case that $S=C$ is a Cantor set as in the last claim,
	we now provide the reverse of the left estimate of
	\eqref{eq:e1} in order to show \eqref{eq:foet}. 
	First assume $\lambda_{int}(\xi)=:a\geq 1$.
	We may again assume strict inequality $a>1$ as if otherwise
	$a=1$,
	the claimed estimate is a consequence of \eqref{eq:e1} and \eqref{eq:cons}.
	This again implies certain convergents satisfy $u_{l}/v_{l}\in C$ as above.
	Let $Q$ be an arbitrary, large real number. Let $l$ be the index such 
	that $v_{l}\leq Q<v_{l+1}$.
	Then by Lemma~\ref{lele} we have
	\begin{equation} \label{eq:abo}
	Q^{-1} >
	v_{l+1}^{-1} \geq \vert v_{l}\xi-u_{l}\vert.
	\end{equation}
	If $u_{l}/v_{l}\notin C$, then
	since $1>1/a$ we may simply choose $r/s=u_{l}/v_{l}$. 
	Now assume $u_{l}/v_{l}\in C$. 
	Then by \eqref{eq:cofr}, for large $l$
	additionally to \eqref{eq:abo} we have
	\[
	Q^{-1/(a+\epsilon)} \geq v_{l+1}^{-1/(a+\epsilon)}\gg
	v_{l}^{-1} \geq \vert v_{l-1}\xi-u_{l-1}\vert.
	\] 
	Combining,
	for any given natural number $N$ and any $1\leq n\leq N$, we have
	\begin{align*}
	\vert (v_{l}+nv_{l-1})\xi-u_{l}-nu_{l-1}\vert &
	\leq \vert v_{l}\xi-u_{l}\vert +
	n\vert v_{l-1}\xi-u_{l-1}\vert \\
	&\ll_{N} 
	\max \{Q^{-1}, Q^{-1/(a+\epsilon)} \}=  Q^{-1/(a+\epsilon)}.
	\end{align*}
	Now, from
	Lemma~\ref{fslemma} we see that for large $N=N(C)$ and some  
	$1\leq n\leq N$ the expression $(u_{l}+nu_{l-1})/(v_{l}+nv_{l-1})$ 
	does not belong to $C$. By the above estimate, as $\epsilon$ can be chosen
	arbitrarily small and $Q$ was chosen arbitrary,
	we verify the reverse inequality
	\[
	\widehat{\lambda}_{ext}(\xi)\geq \frac{1}{a}=\frac{1}{\lambda_{int}(\xi)},
	\]
	so by \eqref{eq:e1} we must have equality. Hence
	\eqref{eq:foet} holds in this case. 
	
	Finally assume $\lambda_{int}(\xi)<1$. Then we
	have $\vert q\xi-p\vert \gg q^{-1+\delta}$ for some $\delta>0$
	and any $p/q\in \mathbb{Q}\cap C$. On the other hand, by Dirichlet's
	Theorem we know that for any parameter $Q$ there is a rational
	number $p/q$ with $q\leq Q$ and $\vert q\xi-p\vert \leq Q^{-1}\leq q^{-1}$.
	Combining these observations,
	for large $Q$ these rationals $p/q$ must lie outside $C$. Hence
	$\widehat{\lambda}_{ext}(\xi)= \widehat{\lambda}(\xi)=1$, where
	the last identity is Khintchine's result already mentioned
	in \eqref{eq:cons}. Again we
	conclude \eqref{eq:foet}.
\end{proof}

The question arises if we can reverse the argument of the proof to infer
''good'' intrinsic rational approximations to $\xi$ from ''bad'' uniform extrinsic
approximation of $\xi\in C$, corresponding to a hypothetical inequality
like $\lambda_{int}(\xi)\geq 1/\widehat{\lambda}_{ext}(\xi)$ if
$\widehat{\lambda}_{ext}(\xi)$ is small enough (it is in general
false by a metrical argument
if $\widehat{\lambda}_{ext}(\xi)=1$). 
The problem is that in the lattice
point problem studied above, both successive minima may be
throughout realized by vectors
$(m_{i},n_{i})\in\mathbb{Z}^{2}$ that lead to rational numbers $m_{i}/n_{i}$ 
in $C$, for $i=1,2$, in which case can only infer $\lambda_{int}(\xi)\geq 1$. 
Nevertheless, as this seems to be rather exceptional situation that may not occur,
the estimate $\lambda_{int}(\xi)\geq 1/\widehat{\lambda}_{ext}(\xi)$ may be true if $\widehat{\lambda}_{ext}(\xi)<1$. 

\begin{problem}
	Assume $\widehat{\lambda}_{ext}(\xi)<1$. Does it
	follow that $\lambda_{int}(\xi)\geq 1/\widehat{\lambda}_{ext}(\xi)$?
\end{problem}

We turn towards the verification of Theorem~\ref{pthm}.

\begin{proposition} \label{ppp}
	Let $b\geq 3$ an integer and $W\subsetneq \{0,1,\ldots,b-1\}$.
	Assume 
	$\xi=(0.c_{1}c_{2}\cdots c_{k}\overline{c_{k+1}\cdots c_{N-1}})_{b}$ 
	is the base $b$ representation of 
	a rational number. If $c_{1},\ldots,c_{k+1}$ belong
	to $W$ and $c_{m}\notin W$ for some $m>k+1$, then $\xi\notin C_{b,W}$.  
\end{proposition}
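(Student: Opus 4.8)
The plan is to reduce everything to the (near-)uniqueness of base $b$ expansions, using that $\xi\in C_{b,W}$ holds if and only if \emph{some} base $b$ digit string of $\xi$ has all its digits in $W$ (this is immediate from the description $C_{b,W}=\{\sum_{j\ge 1} w_j/b^j : w_j\in W\}$ coming from the defining IFS $f_w(y)=(y+w)/b$). So I would argue by contradiction: assume $\xi\in C_{b,W}$, say $\xi=\sum_{j\ge 1} d_j/b^j$ with every $d_j\in W$. Since the given string $(c_j)_{j\ge 1}$ of $\xi$ contains a digit $c_m\notin W$ with $m>k+1$, we must have $(d_j)_{j\ge 1}\neq(c_j)_{j\ge 1}$.

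Next I would invoke the classical description of when two distinct digit strings represent the same real number in $[0,1]$: after possibly interchanging them, they agree up to some index $t-1$, differ by exactly $1$ at index $t$, and from index $t+1$ on one of the strings is identically $0$ while the other is identically $b-1$. Applied to $(c_j)_{j\ge 1}$ and $(d_j)_{j\ge 1}$, this forces the given string $(c_j)_{j\ge 1}$ itself to be eventually constant, equal to $0$ or to $b-1$; call this constant $v$, so $v\in\{0,b-1\}$.

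The decisive observation is that the hypotheses forbid this. The string $(c_j)_{j\ge 1}$ has the form $c_1,\dots,c_k$ followed by the block $c_{k+1},\dots,c_{N-1}$ repeated forever, so it is eventually constant equal to $v$ only if every digit of that block equals $v$; then $c_j=v$ for all $j>k$. In particular $c_{k+1}=v$, so $v\in W$ because $c_1,\dots,c_{k+1}\in W$ by assumption; on the other hand the offending digit satisfies $c_m=v$ (as $m>k+1>k$) while $c_m\notin W$, so $v\notin W$ --- a contradiction. Hence $\xi\notin C_{b,W}$.

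The proof is essentially a one-liner once this structure is in place; the only step genuinely worth spelling out is the bookkeeping around non-unique base $b$ expansions. One must first dispose of the trivial coincidence $(c_j)=(d_j)$ (impossible, since then $c_m=d_m\in W$), and then recognise that the surviving ``$b$-adic rational'' alternative is exactly what turns the given eventually periodic string into an eventually constant one, at which point the hypothesis $c_{k+1}\in W$ --- that the \emph{first} digit of the period lies in $W$ --- is precisely the input needed to close the argument. No restriction on the length of the preperiod, and no separation condition on the underlying system, enters anywhere.
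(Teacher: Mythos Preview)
Your proof is correct and follows essentially the same route as the paper's: reduce to the (near-)uniqueness of base $b$ expansions, observe that the non-unique case forces the given string to be eventually constant $0$ or $b-1$, and then derive a contradiction from the fact that the period contains both a digit in $W$ (namely $c_{k+1}$) and a digit outside $W$ (namely $c_m$). The paper's version is terser --- it simply says ``both letters appear in the period, clearly $\xi$ is not of this form'' --- while you spell out the bookkeeping more carefully, but the argument is the same.
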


\begin{proof}
	If $\xi$ has a unique base $b$ expansion then the claim follows from $c_{j}\notin W$.
	If the representation is not unique then there are two base $b$ representations
	of $\xi$, one ending in $\overline{0}$ and the other ending in $\overline{b-1}$. However,
	since $c_{k+1}\in W$ and $c_{j}\notin W$ for some $j>k+1$ and both letters
	appear in the period above, clearly $\xi$ is not of this form, contradiction.
\end{proof}

\begin{proof}[Proof of Theorem~\ref{pthm}]
	Clearly any $\xi\in\mathbb{Q}$ has ultimately periodic base $b$ 
	expansion,
	and the representation of $\xi$ in \eqref{eq:ain}
	via $\xi=p/q$ with $p,q$ in \eqref{eq:asinas} is carried
	out straightforwardly as in~\cite[Lemma~2.3]{1}. 
	Let $m=\phi(\xi)$. If $m\leq k+1$ so that the first digit 
	ouside $W$ belongs to the preperiod
	or equals the first period digit,
	then the estimate follows from the elementary estimate
	$m\leq k+1\ll \log q_{0}\ll q_{0}^{\Delta}$, as discussed in Section~\ref{rel}. 
	Thus we may assume $m>k+1$.
	Obviously $\xi$ has distance at most $b\cdot b^{-m}$
	from $r:=c_{0}b^{-1}+c_{1}b^{-2}+\cdots +c_{m-1}b^{-m}\in\mathbb{Q}$. 
	By definition of $m$ clearly 
	$r\in C_{b,W}$. On the other hand, by our assumption $c_{m}\notin W$ and by Proposition~\ref{ppp} we have
	$\xi\notin C_{b,W}$. Hence, by Theorem~\ref{bfr} the rational
	number $\xi$ has distance at least $b^{-\delta_{0}q_{0}^{\Delta}}$ from
	any element of $C=C_{b,W}$ for some uniform $\delta_{0}$, in particular from $r$.
	Comparison
	yields $\phi(\xi)=m\ll q_{0}^{\Delta}$. 
\end{proof}

\begin{proof}[Proof of Corollary~\ref{korolar}]
Obviously $W_{1}\subseteq W_{2}$, and  
if $W_{1}=\{0,1\ldots,b-1\}$ the claim is obvious.
Otherwise there is some 
$w\notin W_{1}$. If $w\in W_{2}$,
by taking
$W=\{0,1,\ldots,b-1\}\setminus \{w\}$,
from Theorem~\ref{pthm} we get $d^{\prime}\geq c(b)\cdot\phi^{1/\Delta}$. 
We get 
a contradiction 
by our assumptions on $\phi,\psi$ as soon as
$c_{1}\geq c(b)^{\Delta}$. Hence $w\notin W_{2}$,
and since $w\notin W_{1}$ was arbitrary the sets are equal.
\end{proof}

\vspace{1cm}

{\em The author thanks the referee for the careful reading and suggestions
	for improvement.
	The author further thanks Simon Kristensen, Diego Marques, Nikita Sidorov,
	Michael Coons and Igor Shparlinski 
	for help with references!}

\thebibliography{99}

\bibitem{alvarez} B. \'{A}lvarez-Samaniego, W.P. \'{A}lvarez-Samaniego, J. Ortiz-Castro.
Some existence results on Cantor sets. {\em J. Egyptian Math. Soc.} 25 (2017), no. 3, 326--330. 

\bibitem{baker} S. Baker. An analogue of Khintchine's theorem for 
self-conformal sets. {\em to appear in Math. Proc. Cambridge Philos. Soc., 
arXiv: 1609.04588}.

\bibitem{blo} V. Ya. Bloshchitsyn. Rational points in m-adic Cantor sets. {\em J. Math. Sci. (N.Y.)} 211 (2015), no. 6, 747--751. 

\bibitem{erd}  D. Boes, R. Duane, P. Erd\"{o}s. Fat, symmetric, irrational Cantor sets. {\em Amer. Math. Monthly} 88 (1981), no. 5, 340--341.

\bibitem{1} R. Broderick, L. Fishman, A. Reich. Intrinsic approximation on Cantor-like sets, a problem of Mahler. {\em Mosc. J. Comb. Number Theory} 1 (2011), no. 4, 3--12. 

\bibitem{bdl} N. Budarina, D. Dickinson, J. Levesley.
Simultaneous Diophantine approximation on polynomial curves. {\em Mathematika} 56 (2010), no. 1, 77--85.

\bibitem{bugbuch} Y. Bugeaud. Approximation by algebraic numbers.
 Cambridge Tracts in Mathematics, 160. {\em Cambridge University Press, Cambridge}, 2004.

\bibitem{bug} Y. Bugeaud. Diophantine approximation and Cantor
sets. {\em Math. Ann.} 314 (2008), 677--684.

\bibitem{dd} D. Dickinson, M. M. Dodson.
Simultaneous Diophantine approximation on the circle 
and Hausdorff dimension. {\em Math. Proc. Cambridge Philos. Soc.}
130 (2001), no. 3, 515--522.

\bibitem{drutu} C. Dru\c{t}u.
Diophantine approximation on rational quadrics. {\em Math. Ann.}
333 (2005), no. 2, 405--469.

\bibitem{falconer}
K. Falconer. Fractal geometry, Mathematical foundations and applications. Third edition. {\em John Wiley \& Sons, Ltd.,
Chichester}, 2014.

\bibitem{4} L. Fishman, D. Kleinbock, K. Merrill, D. Simmons.
Intrinsic Diophantine approximation on manifolds: General Theory.
{\em  Trans. Amer. Math. Soc.} 370 (2018), no. 1, 577--599.

\bibitem{fs} L. Fishman, D. Simmons. Intrinsic approximation for fractals defined by rational iterated function systems: Mahler's research suggestion. {\em Proc. Lond. Math. Soc.} (3) 109 (2014), no. 1, 189--212.

\bibitem{fishsimmons} L. Fishman, D. Simmons. Extrinsic Diophantine approximation on manifolds and fractals. {\em J. Math. Pures Appl.} 104 (2015), no. 1, 83--101.

\bibitem{hw} G.H. Hardy, E.M. Wright. An introduction to the theory of numbers. Fifth edition. {\em The Clarendon Press, Oxford University Press, New York}, 1979.

\bibitem{hutchinson} J. E. Hutchinson.
Fractals and self-similarity. {\em Indiana Univ. Math. J.} 30 (1981), no. 5, 713--747.

\bibitem{jarnik} V. Jarn\'ik. \"Uber die simultanen Diophantischen 
Approximationen. {\em Math. Z.} 33 (1931), 505--543.

\bibitem{khint} A. Y. Khintchine. \"Uber eine Klasse linearer Diophantischer
Approximationen. {\em Rendiconti Palermo} 50 (1926), 170--195.

\bibitem{korobov} N.M. Korobov. Trigonometric sums with exponential functions, and the distribution of the digits in periodic fractions. {\em (Russian) Mat. Zametki} 8 (1970), 641--652.

 \bibitem{leve} J. Levesley, C. Salp, S.L. Velani. On a problem of K. Mahler: Diophantine approximation and Cantor sets. {\em Math. Ann.} 338 (2007), no. 1, 97--118.

\bibitem{monthly} H. Liebeck, A. Osborne. The Generation of All Rational Orthogonal Matrices. {\em Amer. Math. Monthly}  98 (1991), no. 2, 131--133.

\bibitem{mahler} K. Mahler. Some suggestions for further research.
{\em Bull. Aust. Math. Soc.} 29 (1984), 101--108.

\bibitem{mamo} D. Marques, C.G. Moreira. On a variant of a question proposed by K. Mahler concerning Liouville numbers. {\em Bull. Aust. Math. Soc.} 91 (2015), no. 1, 29--33.

\bibitem{mauldin}  R.D. Mauldin, M. Urba\'{n}ski. Dimensions and measures in infinite iterated function systems. {\em Proc. London Math. Soc.} (3) 73 (1996), no. 1, 105--154.

\bibitem{nagy}  J. Nagy. Rational points in Cantor sets. {\em Fibonacci Quart.} 39 (2001), no. 3, 238--241. 

\bibitem{royschl} D. Roy, J. Schleischitz. Numbers with almost
all convergents in a Cantor set. {\em to appear in Bull. Canad. Math. Soc., arXiv: 1806.02928}.

\bibitem{js} J. Schleischitz. Generalizations of a result 
of Jarn\'ik on simultaneous approximation. {\em 
Mosc. J. Combin. Number Theory}
6 (2016), no. 2-3, 253--287.

\bibitem{ichmh} J. Schleischitz. Rational approximation to algebraic varieties and a new exponent of simultaneous 
approximation. {\em Monatsh. Math.} 182 (2017), no. 4, 941--956.

\bibitem{ss} W. Schmidt, L. Summerer. Parametric geometry of numbers and applications. {\em Acta Arith.} 140 (2009), no. 1, 67--91.

\bibitem{sidorov}  N. Sidorov. Combinatorics of linear iterated function systems with overlaps. {\em Nonlinearity} 20 (2007), no. 5, 1299--1312.

\bibitem{wall} C.R. Wall. Terminating decimals in the Cantor ternary set.
{\em Fibonacci Quart.} 28 (1990), no. 2, 98--101.

\end{document}